\newcommand{\R}{\mathbb{R}}
\newcommand{\T}{\mathbb{T}}
\newcommand{\RR}{\mathbb{R}}
\newcommand{\indic}{\mathbf{1}}
\newcommand{\TT}{\mathbb{T}}
\newcommand{\grad}{\nabla}
\newcommand{\diver}{{\rm{div}}}
\newcommand{\p}{\partial}
\newcommand{\dt}{\Delta t}
\DeclareMathOperator{\EE}{\mathbb{E}}
\DeclareMathOperator{\PP}{\mathbb{P}}
\newcommand{\cF}{{\mathcal F}}
\newcommand{\cG}{{\mathcal G}}
\newcommand{\cJ}{{\mathcal J}}
\newcommand{\cA}{{\mathcal A}}
\newcommand{\cB}{{\mathcal B}}
\newcommand{\cC}{{\mathcal C}}
\newcommand{\cP}{{\mathcal P}}
\newcommand{\cT}{{\mathcal T}}
\newcommand{\cK}{{\mathcal K}}
\newcommand{\ds}{\displaystyle}
\newcommand {\argmin}{\mathrm{arg min} }
\newcommand{\beq}{\begin{equation}}
\newcommand{\beqa}{\begin{eqnarray}}
\newcommand{\bea} {\begin{array}{ll}}
\newcommand{\beqan}{\begin{eqnarray*}}
\newcommand{\eeq}{\end{equation}}
\newcommand{\eeqa}{\end{eqnarray}}
\newcommand{\eeqan}{\end{eqnarray*}}
\newcommand{\eea} {\end{array}}
\newtheorem{theorem}{Theorem}
\newtheorem{lemma}[theorem]{Lemma}
\newtheorem{remark}[theorem]{Remark}
\numberwithin{equation}{section}
\title{Mean Field Type Control with Congestion (II):  \\ An Augmented Lagrangian Method}
\author{Yves Achdou \thanks { Univ. Paris Diderot, Sorbonne Paris Cit{\'e}, Laboratoire Jacques-Louis Lions, UMR 7598, UPMC, CNRS, F-75205 Paris, France.
 achdou@ljll.univ-paris-diderot.fr},
Mathieu Lauri{\`e}re \thanks { Univ. Paris Diderot, Sorbonne Paris Cit{\'e}, Laboratoire Jacques-Louis Lions, UMR 7598, UPMC, CNRS, F-75205 Paris, France.}
}
\begin{document}
\maketitle

\begin{abstract}
	This work deals with  a numerical method for solving a mean-field type control problem with congestion.
It is the continuation of an article by the same authors,  in which suitably defined weak solutions of  the system of partial differential equations 
arising from the model were discussed and existence and uniqueness were proved. Here, the focus is put on numerical methods: 
a monotone finite difference scheme is proposed and shown to have a variational interpretation. Then an Alternating Direction Method of Multipliers
for solving the  variational problem is addressed. It is based on an augmented Lagrangian.
Two kinds of boundary conditions are considered: periodic conditions and more realistic boundary conditions associated to state constrained problems. Various test cases and numerical results are presented.
\end{abstract}
\footnotetext[1]{ The  authors   were partially supported by  ANR projects ANR-12-MONU-0013 and ANR-16-CE40-0015-01.}

\section{Introduction}
In the recent years,  an important research activity has been devoted to the study of stochastic differential games  with a large number of players.
In their pioneering articles  \cite{MR2269875,MR2271747,MR2295621}, J-M. Lasry and P-L. Lions have introduced the notion of {\sl mean field games},
sometimes refered to  as MFGs for short,
which describe the   asymptotic behavior of  stochastic differential games (Nash equilibria) as the number $N$ of players
tends to infinity.  In  such models, it is assumed that the agents are all identical and that 
an individual agent can hardly influence the outcome of the game.  Moreover, each individual strategy is influenced by some averages of functions of 
the states of the other agents. In the limit when $N\to +\infty$, a given agent feels the presence of the other agents through the 
statistical distribution of the states. Since perturbations of a single agent's strategy does not influence the statistical distribution of the states, 
the latter acts as a parameter  in the control problem to be solved by each  agent.  
\\
Another kind of asymptotic regime is obtained by  assuming that all the agents use the same distributed feedback strategy
 and by passing  to the limit as $N\to \infty$ before optimizing the common feedback. Given a common feedback strategy, the asymptotics are 
given by McKean-Vlasov theory, see \cite{MR0221595,MR1108185} : the dynamics of a given agent is found by solving  a stochastic differential equation whose coefficients depend on 
a mean field, namely the statistical distribution of the states, which may also affect the objective function. Since the feedback strategy is common to all agents, perturbations of the latter affect the mean field.  Then, having each player optimize its objective function amounts to solving a control problem
 driven by  McKean-Vlasov dynamics. The latter is named control of McKean-Vlasov dynamics by R. Carmona and F. Delarue \cite{MR3045029,MR3091726} and mean field type control by A. Bensoussan et al, \cite{MR3037035,MR3134900}.
\\
When the dynamics of the players are independent stochastic processes, both mean field games and control of  McKean-Vlasov dynamics 
naturally lead to a  system of coupled partial differential equations, a forward Kolmogorov equation
 and a backward Hamilton-Jacobi--Bellman equation.
For mean field games, the latter system  has been studied by Lasry and Lions in  \cite{MR2269875,MR2271747,MR2295621}. Besides, many important aspects of the mathematical theory on MFGs developed by  the same authors  are not published in journals or books, but can be found in the videos of  the lectures of P-L. Lions at Coll{\`e}ge de France: see the web site of Coll{\`e}ge de France, \cite{PLL}. One can also see \cite{MR3195844} for a brief survey.
 The analysis of the system of partial differential equations arising from mean field type control can be performed with rather similar arguments to
those used for MFGs,
 see \cite{MR3392611} for a work devoted to classical solutions. 
\bigskip

The class  of   MFGs with congestion effects  was introduced and studied by P-L. Lions in  \cite{PLL} in 2011, see also \cite{MR3135339,YAJML, MR3392611} for some numerical simulations,
 to model  situations in  which the cost of displacement  increases in the regions where the density of agents is large.
A striking fact is that in general,  MFGs with congestion cannot be cast into an optimal control problem driven by a partial differential equation, 
in contrast with simpler cases studied by Cardaliaguet et al in \cite{cardaliaguet2014second} for example.  Note that  \cite{cardaliaguet2014second} is inspired 
by the literature on optimal transport, see e.g. \cite{BenamouBrenier2000monge,MR3116016}.
  In contrast with MFGs, mean field type control problems have a genuine variational structure
 i.e. thay can always be seen as problems of optimal control driven by  partial differential equations.
 In \cite{MR3498932}, inspired by  \cite{cardaliaguet2014second} ,  we took advantage of the latter observation to deal with mean field type control with congestion and possibly degenerate diffusions. We introduced a pair of primal/dual optimization problems leading  to a suitable weak formulation of the system 
of partial differential equations for which there exists a unique solution. 

 Note that the variational approach is not the only possible one to deal with weak solutions, see \cite{MR2295621} and 
the nice article of A. Porretta,  \cite{porretta2014},  on weak solutions of Fokker-Planck equations
 and of  MFGs  systems of partial differential equations. Similarly,  \cite{YAAP16} 
contains existence and uniqueness results for suitably defined weak solutions of the systems  of PDEs arising from  MFGs with congestion effects,
 which do not rely on any variational interpretation.

The goal of the present paper is to design a numerical algorithm in order to compute solutions to some mean field control problems including congestions effects. 
Although the scheme is of the same nature as those already proposed in \cite{MR2679575,MR3135339} for MFGs and relies on a monotone discrete Hamiltonian  constructed 
using upwinding, the novelty here lies in  the  natural interpretation of the discrete scheme in terms of an optimal control problem. This allows us to characterize the solution  as the saddle point of a Lagrangian.  We can then define an augmented Lagrangian and propose an Alternating Direction Method of Multipliers (ADMM) to compute the solution.
 This type of algorithm is described in~\cite{FortinGlowinski2000augmented} and was  applied in \cite{BenamouBrenier2000monge} 
to optimal transport and more recently in \cite{BenamouCarlier2015ALG2} to some MFGs with a variational structure.  
In \cite{BenamouCarlier2015ALG2}, Benamou and Carlier restricted themselves to first order MFGs,
 because,  in the second order case, the ADMM requires the solution of a degenerate fourth order 
linear partial differential equation and leads to systems of linear equations with very high condition numbers. 
This issue was recently addressed by R. Andreev, see \cite{andreev2016preconditioning}, who proposed suitable multilevel preconditioners and extended the augmented Lagrangian approach to second order MFGs. \\ 
As emphasized for instance by Benamou and Carlier \cite{BenamouCarlier2015ALG2}, the advantage of such approaches is
 that that they ensure that the mass remains nonnegative and that they are adapted to weak solutions (in the sense that the Bellman equation may not hold where the density is zero). However, in the context of MFGs, they can only be applied to 
problems having a variational structure, therefore not to the congestion models of P-L. Lions. 
\\
This restriction does not exist for mean field type control problems,  since as already mentioned,  the latter always have a variational structure.
Another important point for the models  considered here is that Hamiltonian  becomes singular as the density
 vanishes; nevertheless, we shall see that the present algorithm correctly handles situations when the density of states local vanishes, see Remark~\ref{rem:convergenceADMM} below.
\\
The paper is organized as follows. In the remaining part of the introduction, we recall the mean field type control problem with congestion 
 and the notations introduced in \cite{MR3498932}. Section~\ref{sec:scheme-periodic} is devoted to an Alternating Direction Method of Multipliers (ADMM) for this problem in the periodic setting, {\it i.e.} when the domain is a $d$-dimensional torus. Then, in section~\ref{sec:state-constraints}, we extend the latter  method to the case of state constrained problems. Finally, various test cases and numerical results are presented in section~\ref{sec:numerics}.

\subsection{Model and assumptions}\label{subsec:model}
\label{sec:model-assumptions}
The model considered in the present work leads to the following system of partial differential equations:
\begin{eqnarray}
\label{eq:HJB-mftc-cong}
&  \ds
\frac{\partial u} {\partial t} (t,x) + \nu \Delta u(t,x)  \ds+  H (x, m(t,x), D u(t,x)) 
 + m(t,x)
 \frac{\partial  H} {\partial m}  
(x,m(t,x), D u(t, x)) 
  &= 0 ,
\\
\label{eq:FP-mftc-cong}
	& \ds  \frac{\partial m} {\partial t} (t,x)  
- \nu \Delta m(t,x)  
\ds 
 +
 \diver\Bigl( m(t,\cdot) \frac{\partial H} {\partial p} (\cdot, m(t,\cdot),D u(t,\cdot))\Bigr)(x) &=0,
\end{eqnarray}
with the initial and terminal conditions 
\begin{equation}
\label{eq:3}
  m(0,x)=m_0(x)\quad \hbox{and}\quad u(T,x) = u_T(x).
\end{equation}
\begin{remark}
	For the implementation (see \S~\ref{subsec:algoADMM}), we will restrict ourselves to the first order system, i.e. we will take $\nu = 0$.
The method can be extended to second order system by using suitable multilevel preconditioners for solving the fourth order systems of linear equations
which arise in this case, see \cite{andreev2016preconditioning}.
\end{remark}
\paragraph{Standing assumptions.} We now list the assumptions on the Hamiltonian $H$,  the initial and terminal conditions $m_0$ and $u_T$.
These conditions are supposed to hold in all what follows.
\begin{description}
\item[H1] The Hamiltonian $H:  \T^d\times (0,+\infty) \times \R^d \to \R $ is of the form
  \begin{equation}
    \label{eq:4}
    H(x,m,p)= -\frac {|p|^\beta}{m^{\alpha}} +\ell (x,m),
  \end{equation}
  with $1<\beta \le 2$ and $0\le \alpha<1$, and where $\ell $ is continuous cost function that will be discussed below. It is clear that $H$ is concave with respect to $p$.
  Calling $\beta^*$ the conjugate exponent of $\beta$, {\it i.e.} $\beta^* =\beta /(\beta-1)$, it is useful to note that
  \begin{eqnarray}
\label{eq:5}
  H(x,m, p)=  \inf_{\xi \in \R^d }   \left( \xi\cdot p +L(x,m,\xi)  \right) ,\\
\label{eq:6}  L(x,m,\xi)=(\beta-1) \beta^{-\beta^*}  m^{\frac \alpha {\beta- 1}} |\xi|^{\beta^*} +\ell (x,m),
  \end{eqnarray}
where $L$ is convex with respect to $\xi$, and that
\begin{equation}\label{eq:7}
  L(x,m,\xi)= \sup_{p\in \R^d} (-\xi\cdot p + H(x,m,p)).
\end{equation}
Hereafter, we shall always make the convention that $mH(x,m,p)=0$ if $m=0$.
  \item[H2] (conditions on the cost $\ell$) The function $\ell: \T^d \times \R_+\to \R$ is continuous with respect to both variables and
continuously differentiable with respect to $m$ if $m>0$. We also assume that  $m\mapsto m\ell(x, m)$ is strictly convex, and that 
there exist $q>1$ and two positive constants $C_1$ and $C_2$ such that
\begin{eqnarray}
\label{eq:8}  \frac 1 {C_1 } m^{q-1}- C_1 \le \ell(x,m)\le C_1   m^{q-1}+ C_1,\\
\label{eq:9} \frac 1 {C_2 } m^{q-1}- C_2\le m \frac {\partial\ell }  {\partial m}  (x,m) \le C_2  m^{q-1}+ C_2.
\end{eqnarray}
Moreover, there exists a real function $\underline \ell : \T^d \to \R$ such that:
\begin{equation}
  \label{eq:10}
\ell (x,m)\ge \underline\ell(x),\quad \forall x\in \T^d, \; \forall m\ge 0. 
\end{equation}
The convexity assumption on $m\mapsto m\ell(x, m)$ implies that $m\mapsto mH(x,m, p)$ is strictly convex with respect to $m$.\\
Moreover, we assume that there exists a constant $C_3\ge 0$ such that 
\begin{equation}
  \label{eq:11}
| \ell(x,m)-\ell(y,m)|\le C_3(1+ m^{q-1}) |x-y|.
\end{equation}
\item[H3] We assume that $\beta \ge  q^*$.
\item[H4](initial and terminal conditions) We assume that $m_0$ is of class $\cC^1$ on $\T^d$, that $u_T$ is of class $\cC^2$ on $\T^d$ and that $m_0> 0$ and $\int_{\T^d} m_0(x)dx=1$.
\item[H5]  $\nu$ is a non negative real number.
\end{description}

\subsection{A heuristic justification of (\ref{eq:HJB-mftc-cong})-(\ref{eq:3})}
Consider a probability space $(\Omega, \cT, \cP)$ and a filtration $\cF^t$ generated by a $D$-dimensional standard Wiener process $(W_t)$ and
the stochastic process $(X_t)_{t\in [0,T]}$   in $\R^d$, adapted to $\cF^t$, which solves the stochastic differential equation
\begin{equation}\label{eq:12}
  d X_t = \xi_t \;d t + \nu \;d W_t \qquad \forall t\in [0,T],
\end{equation}
given the initial state $X_0$, which is a random variable $\cF^0$-measurable, whose probability density is  $m_0$, and a bounded stochastic process $\xi_t$ adapted to $\cF_t$ (the control).
More precisely,  we will take
\begin{equation}\label{eq:13}
\xi_t=v(t,X_t), \end{equation}
where $v(t,\cdot)$ is a continuous function on $\T^d$.
As explained in \cite{MR3134900}, page 13, if the feedback function $v$ is smooth enough, then the probability distribution $m_{t}$ of $X_t$ has a density with respect to the Lebesgue measure,  $m_{v}(t,\cdot)\in \PP\cap L^1(\T^d)$ for all $t$,
and   $m_v$ is solution of the Fokker-Planck equation
 \begin{equation}
\label{eq:14}
\frac{\partial m_v} {\partial t} (t,x)  -    \nu \Delta m(t,x)  +
\diver \Big( m_v(t,\cdot)  v(t,\cdot)  \Big) (x)=0,\;\; 
 \end{equation}
for $t\in (0,T]$ and $ x\in \T^d$,
with the initial condition
\begin{equation}
  \label{eq:15}
m_v(0,x)= m_0(x),\quad x\in \T^d.
\end{equation}
 We define the objective function
\begin{equation}
  \label{eq:16}
  \begin{split}
    \cJ(v) &= \EE\left[ \int_0^T L( X_t,m_v(t, X_t), \xi_t) dt  + u_T(X_T)\right]\\
&=\ds \int_{[0,T]\times\T^d} L( x, m_v(t,x),v(t,x)) m_v(t,x) dx dt + \int_{\T^d} u_T(x) m_v(T,x) dx.    
  \end{split}
\end{equation}
The goal is to minimize $\cJ(v)$ subject to (\ref{eq:14}) and (\ref{eq:15}). Following A. Bensoussan, J. Frehse and P. Yam  in  \cite{MR3134900},   it can be seen   that if there exists a smooth feedback function $v^* $ achieving
$\cJ( v^*)= \min \cJ(v)$  and such that $ m_{v^*}>0$    then
\begin{displaymath}
  v^*(t,x)={\rm{argmin}}_{v}  \Bigl( L(x, m_{v^*}(t,x),v)+ \nabla u(t,x)\cdot v(t,x)\Bigr)
\end{displaymath}
and $(m_{v^*},u)$ solve~(\ref{eq:HJB-mftc-cong}), (\ref{eq:FP-mftc-cong}) and (\ref{eq:3}). 
The condition $ m>0$ is necessary for the equation~(\ref{eq:HJB-mftc-cong}) to be defined pointwise.

The issue with the latter argument
is that we do not know how to guarantee a priori that 
 $m$ will not vanish in some region of $(0,T)\times \T^d$. 
In \cite{MR3498932}, we proposed a theory of weak solutions of (\ref{eq:HJB-mftc-cong})-(\ref{eq:3}), in order to cope with the cases when $m$ may vanish. 

\subsection{Two optimization problems}
\label{sec:two-optim-probl}
Let us recall the two optimization problems and the notations that we introduced in~\cite{MR3498932}. 
The first optimization is described as follows. Consider the set $\cK_0$:
\begin{displaymath}
  \cK_0=\left\{ \phi \in \cC^2 ([0,T]\times \T^d): \phi(T,\cdot)= u_T \right\}
\end{displaymath}
and the functional $\cA$ on $\cK_0$:
\begin{equation}
  \label{eq:pbA}
\cA(\phi)=\inf_{
  \begin{array}[c]{l}
m\in L^1((0,T)\times \T^d)\\ m\ge 0    
  \end{array}
}  \cA(\phi,m)
\end{equation}
where 
\begin{equation}
  \label{eq:18}
 \cA(\phi,m)=
  \begin{array}[t]{l}
\ds \int_0^T \int_{\T^d}    m(t,x)\left(\frac{\partial \phi} {\partial t} (t,x)  +\nu \Delta \phi\; (t,x)   +  H
(x, m(t,x), D \phi(t,x) )          \right)    dx dt    \\ \ds +\int_{\T^d}    m_0(x)\phi(0,x) dx.
  \end{array}\end{equation}
Then the first problem reads
\begin{equation}
  \label{eq:19}
  \sup_{\phi\in \cK_0} \cA(\phi).
\end{equation}
For the second optimization problem, we consider the set  $\cK_1$:
\begin{equation}
  \label{eq:20}
  \cK_1=\left\{
    \begin{array}[c]{l}
(m,z) \in L^1 ((0,T)\times \T^d) \times L^1 ((0,T)\times \T^d;\R^d)  : \\ m\ge 0 \hbox{ a.e}
\\
	\ds \frac{\partial m} {\partial t}   -  \nu \Delta m  +  \diver   z  =0 ,
\qquad m(0,\cdot)= m_0 
    \end{array}
\right\}
\end{equation}
where the boundary value problem is satisfied in the sense of distributions.
We also define
\begin{equation}
\label{eq:21}
\widetilde L (x,m, z)= \left\{
  \begin{array}[c]{rl}
m L(x, m, \frac z m ) \quad &\hbox{ if } m>0\\
0    \quad &\hbox{ if } (m,z)=(0,0)\\
+\infty \quad &\hbox{ otherwise. } 
  \end{array}
    \right.
\end{equation}
Note that $(m,z) \mapsto \widetilde L(x,m,z)$ is LSC on $\R\times \R^d$.
Using (\ref{eq:6}), Assumption  ${\rm H2}$ and the results of \cite{MR3392611} paragraph 3.2, it can be proved that  
 $(m,z) \mapsto \widetilde L(x,m,z)$ is convex on $\R\times \R^d$, because $0<\alpha<1$. It can also be checked that
\begin{equation}\label{eq:22}
  \widetilde L(x,m,z)=\left\{
    \begin{array}[c]{ll}
\ds \sup_{p\in \R^d} (-z\cdot p + mH(x,m,p))\quad \quad &\hbox{if } m>0 \hbox{ or } (m,z)=(0,0)      \\
+\infty &\hbox{otherwise.}
    \end{array}\right.
\end{equation}
Since $L$ is bounded from below, $\ds \int_0^T \int_{\T^d} \widetilde L (x,m(t,x), z(t,x)) dx dt $ is well defined in $\R \cup \{+\infty\}$ for all $(m, z) \in \cK_1$.
 Then, the second problem is: 
 \begin{equation}
   \label{eq:pbB}
  \inf_{(m,z)\in \cK_1} \cB(m,z),
 \end{equation}
where if $\ds \int_0^T \int_{\T^d} \widetilde L (x,m(t,x), z(t,x)) dx dt<+\infty$,
\begin{equation}
  \label{eq:24}
\ds \cB(m, z)=  \int_0^T \int_{\T^d} \widetilde L (x,m(t,x), z(t,x)) dx dt  + \int_{\T^d}     m(T,x) u_T(x) dx,
\end{equation}
 and if not,
\begin{equation}
  \label{eq:25}
\cB(m, z)= +\infty.
\end{equation}
\\
 To give a meaning to the second integral in (\ref{eq:24}), we define $w(t,x)= \frac {z(t,x)}{m(t,x)}$ if $m(t,x)>0$ and 
$w(t,x)=0$ otherwise. From (\ref{eq:6}) and (\ref{eq:8}), we see that   $\int_0^T \int_{\T^d} \widetilde L (x,m(t,x), z(t,x)) dx dt <+\infty$ implies that
 $m ^{1+\frac \alpha {\beta-1}} |w|^{\beta ^*}\in L^1( (0,T)\times \T^d)$,   which implies that 
 $m  |w|^{\frac \beta  {\beta -1 +\alpha}}\in L^1( (0,T)\times \T^d)$. 
In that case, the boundary value problem in (\ref{eq:20}) can be rewritten as follows:
\begin{equation}
  \label{eq:26}
	\ds \frac{\partial m} {\partial t}   - \nu \Delta m +  \diver   (mw)  =0 ,\quad\quad m(0,\cdot)= m_0,
\end{equation}
and we can use Lemma 3.1 in \cite{cardaliaguet2014second} in order to obtain the following:
\begin{lemma}
\label{sec:two-optim-probl-1}
If $(m,w)\in \cK_1$ is such that $\int_0^T \int_{\T^d} \widetilde L (x,m(t,x), z(t,x)) dx dt <+\infty$, then
  the map $t\mapsto m(t)$  for $t\in (0,T)$ and $t\mapsto m_0$ for $t<0$  is H{\"o}lder continuous a.e. for the weak * topology of $\cP(\T^d)$.
\end{lemma}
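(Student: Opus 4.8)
The plan is to verify the hypotheses of Lemma~3.1 of \cite{cardaliaguet2014second} and then quote it, the one point needing attention being an $L^1$ bound on a fractional power of the momentum $mw$. As already observed just before the statement, the finiteness of $\int_0^T\int_{\T^d}\widetilde L(x,m,z)\,dx\,dt$ together with (\ref{eq:6}) and (\ref{eq:8}) forces $m\,|w|^{r}\in L^1((0,T)\times\T^d)$ with $r:=\beta/(\beta-1+\alpha)$; since $0\le\alpha<1$ and $\beta>1$ one has $\beta-1+\alpha<\beta$, hence $r>1$ and its conjugate exponent satisfies $1/r^*=1-1/r=(1-\alpha)/\beta\in(0,1)$. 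First I would also record that testing (\ref{eq:26}) with the constant function $\phi\equiv1$ (legitimate on the torus, where there is no boundary term) shows that $t\mapsto\int_{\T^d}m(t,x)\,dx$ is constant, so $\int_{\T^d}m(t,x)\,dx=1$ for a.e.\ $t$ and each slice $m(t,\cdot)$ may be seen as an element of $\cP(\T^d)$.

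Next I would extract the quantitative modulus of continuity, which is the content of the cited lemma. Fix $\phi\in\cC^2(\T^d)$ and test (\ref{eq:26}) on the slab $(s,t)\times\T^d$: for a.e.\ $0<s<t<T$,
\[
\int_{\T^d}\phi\,dm(t)-\int_{\T^d}\phi\,dm(s)=\int_s^t\int_{\T^d}\bigl(\nu\,\Delta\phi\,m-\nabla\phi\cdot mw\bigr)\,dx\,d\tau .
\]
By mass conservation the first term is bounded by $\nu\,\|\Delta\phi\|_\infty(t-s)$, while for the second one Hölder's inequality with exponents $r^*$ and $r$, after writing $m|w|=m^{1/r^*}\,(m|w|^{r})^{1/r}$, gives
\[
\Bigl|\int_s^t\!\!\int_{\T^d}\nabla\phi\cdot mw\,dx\,d\tau\Bigr|\le\|\nabla\phi\|_\infty\,(t-s)^{1/r^*}\Bigl(\int_0^T\!\!\int_{\T^d}m|w|^{r}\,dx\,d\tau\Bigr)^{1/r}.
\]
Combining, for $|t-s|\le1$ and using $1/r^*\le1$, one obtains $\bigl|\int_{\T^d}\phi\,d(m(t)-m(s))\bigr|\le C\,\|\phi\|_{\cC^2}\,(t-s)^{1/r^*}$ with $C$ depending only on $\nu$ and on $\|m|w|^{r}\|_{L^1}$.

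Finally I would upgrade this a.e.\ estimate to a bona fide representative. The bound says that on a subset of full measure of $(0,T)$ the map $t\mapsto m(t,\cdot)$ is $1/r^*$-Hölder continuous for the distance $d(\mu,\mu')=\sup\{\,|\langle\phi,\mu-\mu'\rangle|:\|\phi\|_{\cC^2}\le1\,\}$, which metrizes the weak-$*$ topology on $\cP(\T^d)$, a space that is compact, hence complete, for that topology; the map therefore extends uniquely to a Hölder continuous map on all of $(0,T)$, and this extension coincides a.e.\ with $m$, giving the announced representative. To include $t\le0$, I would repeat the computation on $(0,t)\times\T^d$ and invoke the distributional initial condition $m(0,\cdot)=m_0$ encoded in $\cK_1$: the same estimate yields $\bigl|\int_{\T^d}\phi\,d(m(t)-m_0)\bigr|\le C\,\|\phi\|_{\cC^2}\,t^{1/r^*}$, so the function extended by $m_0$ for $t<0$ is Hölder continuous across $0$ as well. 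The only genuinely delicate step is this passage from an a.e.\ bound to a pointwise-defined Hölder representative, but it is the standard density-and-completeness argument already carried out in Lemma~3.1 of \cite{cardaliaguet2014second}, so no new difficulty arises here.
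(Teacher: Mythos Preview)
Your proposal is correct and follows precisely the paper's approach: the paper simply records that the finiteness of $\int\widetilde L$ forces $m|w|^{\beta/(\beta-1+\alpha)}\in L^1((0,T)\times\T^d)$ with exponent strictly larger than $1$, and then invokes Lemma~3.1 of \cite{cardaliaguet2014second} without further argument. You reproduce this and additionally unpack the proof of the cited lemma (the slab test, the H\"older splitting $m|w|=m^{1/r^*}(m|w|^r)^{1/r}$, and the passage to a continuous representative), but the route is identical.
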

Lemma~\ref{sec:two-optim-probl-1} implies that the measure $m(t)$ is defined for all $t$, so the second integral in (\ref{eq:24}) has a meaning. In \cite{MR3498932} the following result has been proven.
\begin{lemma}
  \label{sec:two-optim-probl-2}
  The two problems $\cA$ and $\cB$ are in duality, in the sense that:
  \begin{equation}
    \label{eq:27}
\sup _{\phi\in \cK_0} \cA(\phi)= \min_{(m,z)\in \cK_1} \cB(m,z).
  \end{equation}
Moreover the latter minimum is achieved by a unique $(m^*, z^*)\in \cK_1$, and $m^*\in L^q((0,T)\times \T^d)$.
\end{lemma}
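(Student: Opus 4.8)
The plan is to read the duality \eqref{eq:27} off the Fenchel--Rockafellar theorem, and then to deduce attainment, uniqueness and $L^q$-integrability of the minimizer by the direct method of the calculus of variations, exploiting the coercivity built into Assumption~H2. The inequality $\sup_{\phi\in\cK_0}\cA(\phi)\le\inf_{(m,z)\in\cK_1}\cB(m,z)$ is the easy half: for $\phi\in\cC^2$ and $(m,z)\in\cK_1$ with $\int_0^T\int_{\TTd}\widetilde L(x,m,z)\,dx\,dt<+\infty$ (hence $m\in L^q$ and $z\in L^1$), Lemma~\ref{sec:two-optim-probl-1} makes $m(t,\cdot)$ meaningful for every $t$, and a mollification argument justifies integrating by parts in \eqref{eq:18}, which yields $\cA(\phi,m)=\int_{\TTd}m(T,x)u_T(x)\,dx+\int_0^T\int_{\TTd}\bigl(mH(x,m,D\phi)-z\cdot D\phi\bigr)\,dx\,dt\le\cB(m,z)$ by \eqref{eq:22}; since $\cA(\phi)\le\cA(\phi,m)$, the claim follows. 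The substance is the reverse inequality --- the absence of a duality gap.

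To set up the duality, put $V=\cC^2(\OTTTd)$, $Y=\cC^0(\OTTTd)\times\cC^0(\OTTTd;\R^d)$, and let $\Lambda\phi=(\p_t\phi+\nu\lapl\phi,\,D\phi)$ be the continuous linear map from $V$ to $Y$ appearing in \eqref{eq:18}. Introduce $\cH(x,a,b):=\inf_{m\ge0}\bigl(ma+mH(x,m,b)\bigr)$. For fixed $(x,a,b)$ the function $m\mapsto ma+mH(x,m,b)=ma-|b|^\beta m^{1-\alpha}+m\ell(x,m)$ is strictly convex, and, because $q>1$, the term $m\ell(x,m)\ge\frac1{C_1}m^q-C_1 m$ (by \eqref{eq:8}) dominates at infinity, so the infimum is finite, nonpositive and attained; hence $\cH(x,\cdot,\cdot)$ is a finite concave function on $\R\times\R^d$, and, since the bounds in H2 are uniform in $x$, the functional $(a,b)\mapsto\int_0^T\int_{\TTd}\cH(x,a,b)\,dx\,dt$ is finite and norm-continuous on $Y$. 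Set $F(\phi):=-\int_{\TTd}m_0\phi(0,\cdot)+\iota_{\{\phi(T,\cdot)=u_T\}}(\phi)$ on $V$ and $G(a,b):=-\int_0^T\int_{\TTd}\cH(x,a,b)\,dx\,dt$ on $Y$, so that \eqref{eq:19} is exactly $-\inf_{\phi\in V}\{F(\phi)+G(\Lambda\phi)\}$. Since $G$ is finite and continuous everywhere on $Y$ while $F$ is finite at, say, $\phi_0(t,x):=u_T(x)$, the qualification hypothesis of Fenchel--Rockafellar is satisfied: there is no duality gap and the dual problem $\min_{\sigma\in Y^*}\{F^*(\Lambda^*\sigma)+G^*(-\sigma)\}$ is attained.

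The next step is to identify this dual with \eqref{eq:pbB}. Since $-\cH$ is finite-valued and convex on all of $\R\times\R^d$, its integral functional has conjugate on $Y^*=\cM(\OTTTd)^{1+d}$ equal to $+\infty$ on measures with nontrivial singular part; hence any $\sigma$ with $G^*(-\sigma)<+\infty$ is of the form $\sigma=(m,z)$ with $m,z\in L^1(\OTTTd)$, and $G^*(-\sigma)=\int_0^T\int_{\TTd}(-\cH(x,\cdot,\cdot))^*(-m,-z)\,dx\,dt$. A direct computation of this conjugate --- choosing, for given $m>0$, the value of $a$ for which the minimizing $m'$ in the definition of $\cH$ equals $m$, and invoking \eqref{eq:5}--\eqref{eq:7} --- gives $(-\cH(x,\cdot,\cdot))^*(-m,-z)=\widetilde L(x,m,z)$ for every $(m,z)$ (in particular $=+\infty$ when $m<0$, and when $m=0,\ z\neq0$), matching \eqref{eq:22}; thus $G^*(-\sigma)=\int_0^T\int_{\TTd}\widetilde L(x,m,z)\,dx\,dt$. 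On the other hand, the adjoint relation $\langle\Lambda\phi,\sigma\rangle=\langle\phi,\Lambda^*\sigma\rangle$ and the terminal constraint $\phi(T,\cdot)=u_T$ give, after integration by parts, $F^*(\Lambda^*\sigma)=\int_{\TTd}m(T,x)u_T(x)\,dx$ when $\p_t m-\nu\lapl m+\div z=0$ and $m(0,\cdot)=m_0$ in the sense of distributions, and $+\infty$ otherwise --- the sign constraint $m\ge0$ being already forced by finiteness of $G^*(-\sigma)$. Collecting terms, $F^*(\Lambda^*\sigma)+G^*(-\sigma)=\cB(m,z)$ for every $\sigma\in Y^*$, so the dual of \eqref{eq:19} is precisely \eqref{eq:pbB}; this proves \eqref{eq:27} and the attainment of $\min\cB$, at some $(m^*,z^*)\in\cK_1$.

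For uniqueness, $\cB$ is convex on the convex set $\cK_1$ (the convexity of $(m,z)\mapsto\widetilde L(x,m,z)$ was noted just after its definition \eqref{eq:21}, and the terminal term is linear); evaluating $\cB$ at the midpoint of two putative minimizers and using the strict convexity of $m\mapsto m\ell(x,m)$ from H2 forces $m^*$ to be unique, after which the strict convexity of $\xi\mapsto L(x,m,\xi)$ (recall $\beta^*>1$) gives uniqueness of $z^*$ on $\{m^*>0\}$, while finiteness of $\cB$ forces $z^*=0$ on $\{m^*=0\}$. For the integrability, $\widetilde L(x,m,z)\ge m\ell(x,m)\ge\frac1{C_1}m^q-C_1 m$ by \eqref{eq:6} and \eqref{eq:8}, and $\int_{\TTd}m^*(t,\cdot)=1$ for a.e.\ $t$ by mass conservation, so integrating over $\OTTTd$ yields $\frac1{C_1}\|m^*\|_{L^q(\OTTTd)}^q\le\cB(m^*,z^*)+C_1 T+\|u_T\|_\infty<+\infty$; here $\cB(m^*,z^*)=\sup_{\phi\in\cK_0}\cA(\phi)$ is finite because $\cB$ is finite at, e.g., the pair $(m,0)$ with $m$ solving $\p_t m-\nu\lapl m=0$, $m(0,\cdot)=m_0$. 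The step I expect to be the main obstacle is the joint verification of the continuity/qualification property of $\cH$ and of the exact identity $(-\cH)^*=\widetilde L$: both rest on the uniform superlinear bounds \eqref{eq:8}--\eqref{eq:9} controlling the infimum defining $\cH$ and on coping with the singularity of $H$ as $m\downarrow0$; justifying the integrations by parts at the low regularity of admissible pairs (through Lemma~\ref{sec:two-optim-probl-1}) is a secondary technical point.
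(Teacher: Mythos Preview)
Your proposal is correct and follows essentially the same Fenchel--Rockafellar approach that the paper sketches (and that is carried out in full in the companion paper~\cite{MR3498932}): you set up the same operators $\cF$, $\cG$, $\Lambda$, verify the qualification condition via the finiteness and continuity of $\cG$ on $E_1$, and identify the dual as problem~$\cB$ by computing the conjugates. Your write-up is in fact more detailed than the paper's own sketch, in particular the explicit arguments for the absence of a singular part (via $(-\cH)$ being finite-valued, hence $(\widetilde L)^\infty=+\infty$ off the origin), for uniqueness (strict convexity in $m$ then in $z$), and for $m^*\in L^q$ (coercivity from~\eqref{eq:8}).
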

The proof is based on the following observations: firstly, using the Fenchel-Moreau theorem, see e.g. \cite{MR1451876}, problem $\cA$ can be written:
\begin{equation}
  \label{eq:33}
  \sup_{\phi\in \cK_0} \cA(\phi)=   - \inf_{\phi\in E_0} \left( \cF(\phi) + \cG (\Lambda(\phi)) \right),
\end{equation}
where the functional $\cF$ and $\chi_T$ are defined on $E_0=\cC^2([0,T]\times \T^d)$ by:
  \begin{equation*}
    \cF(\phi)= \chi_T(\phi)- \int_{\T^d} m_0(x)\phi(0,x)dx, 
    \qquad
    \chi_T(\phi)=
    \begin{cases}
    0 & \hbox{ if } \phi|_{t=T}=u_T \\
    +\infty & \hbox{ otherwise,}
    \end{cases}
  \end{equation*}
and the functional $\cG$ is defined on $E_1=\cC^0([0,T]\times \T^d)\times \cC^0([0,T]\times \T^d;\R^d)$ by:
\begin{equation}
  \label{eq:28}
\cG(a, b)= -\inf_{ \begin{array}[c]{l}
m\in L^1((0,T)\times \T^d)\\ m\ge 0    
  \end{array}} \int_0^T \int_{\T^d}    m(t,x) \left( a(t,x)+  H
(x, m(t,x), b(t,x) ) \right)    dx dt   .
\end{equation}
and $\Lambda$ is the linear operator $\Lambda: E_0\to E_1$ defined by
\begin{displaymath}
  \Lambda(\phi)= \left(\frac{\partial \phi} {\partial t}   +   \nu \Delta \phi, D\phi \right).
\end{displaymath}
Secondly, problem $\cB$ can be written
\begin{displaymath}
 \inf_{(m,z)\in \cK_1} \cB(m,z)
 = \inf_{(m,z)\in E_1^* } \left(      \cF^* (\Lambda^* (m,z)) + \cG^* (-m,-z) \right),
\end{displaymath}
where $E_1^*$ is the topological dual of $E_1$ {\it i.e.} the set of Radon measures $(m,z)$ on $(0,T)\times \T^d$ with values in $\R\times \R^d$. If $E_0^*$ is the dual space of $E_0$, the operator $\Lambda^*: E_1^* \to E_0^*$ is the adjoint of $\Lambda$. The maps $\cF^*$ and $\cG^*$ are the Legendre-Fenchel conjugates of $\cF$ and $\cG$. 

The conclusion of the proof then relies on Fenchel-Rockafellar duality theorem, see \cite{MR1451876}.

To design our Augmented Lagrangian algorithm, we will introduce discrete counterparts of problems $\cA$ and $\cB$, and also of the operators $\cF, \cG, \Lambda, \cF^*, \cG^*$ and $\Lambda^*$.

\section{Numerical scheme in the periodic setting}\label{sec:scheme-periodic}
\subsection{Discretization}
 In the sequel, $\R_+ = [0,+\infty)$,  and for any $x \in \R$, $x^+ = \max(x,0), x^- = \max(0,-x)$ are respectively the positive and the negative parts of $x$.
\\
We focus on the two-dimensional case, i.e. $d=2$. Let $\T^2_h$ be a uniform grid on the unit two-dimensional torus with mesh step $h$ 
such that $1/h$ is an integer $N_h$. We note by $x_{i,j}$ the point in $\T^2_h$ of coordinates $(i h, j h)$,
where $i,j$ are understood modulo $N_h$ if needed. For a  positive integer $N_T$, 
consider $\Delta t = T/N_T$ and $t_n = n\Delta t$, $n = 0,\dots, N_T$. \\
We note $N = (N_T+1) N_h^2$ the total number of points in the space-time grid. It will  sometimes be convenient to also use $N' = N_T N_h^2$. 
A grid function $f$ is a family of real numbers $(f_{i,j}^n)$ for  $n\in\{0,\dots,N_T\}$ and $i,j \in  \{0,\dots,N_h-1\}$. In the periodic setting, we agree that $f_{i \pm N_h,j}^n = f_{i,j \pm N_h}^n = f_{i,j}^n$.\\
For any positive integer $K$ and any $f,g \in \R^{K N}$, we define
$$
	\langle f, g\rangle_{\ell^2(\R^{KN})} = h^2 \Delta t \sum_{k=1}^K \sum_{i,j,n} (f^k)_{i,j}^n (g^k)_{i,j}^n, \quad ||f||_{\ell^2(\R^{KN})}= \sqrt{\langle f, f\rangle_{\ell^2(\R^{KN})}},
$$
and we use a similar definition for vectors in $\R^{KN'}$.
 When the  space of interest is clear from the context, we  use the notations $\langle ., .\rangle_{\ell^2}$ and $||.||_{\ell^2}$.
 The scalar product in $\R^ 2$ will be noted by $u\cdot v$. 
\\
 The discrete versions  of the data  are  ${\widetilde m}_{i,j}^0 = \frac 1 {h^2} \int_ {|x- x_{i,j}|_{\infty}<h/2} m_0(x) dx $
 and ${\widetilde u}_{i,j}^{N_T} = u_T(x_{i,j})$,  $i,j\in\{0,\dots,N_h-1\}$.
\\
We also introduce the one sided finite differences: for any $\phi \in \RR^{N_h^2}$,
\begin{equation*}
	(D_1^+ \phi)_{i,j} = \frac{\phi_{i+1,j} - \phi_{i,j}}{h}, \quad (D_2^+ \phi)_{i,j} = \frac{\phi_{i,j+1} - \phi_{i,j}}{h}, \qquad \forall \, i,j\in\{0,\dots,N_h-1\}.
\end{equation*}
We let $[\grad_h \phi ]_{i,j}$ be the collection of the four possible one sided finite differences at $x_{i,j}$:
\begin{equation*}
	[\grad_h \phi ]_{i,j} = \Big( (D_1^+ \phi)_{i,j}, (D_1^+ \phi)_{i-1,j}, (D_2^+ \phi)_{i,j}, (D_2^+ \phi)_{i,j-1} \Big) \in \RR^4,
\end{equation*}
and $\Delta_h$ be the discrete Laplacian, defined by
\begin{equation*}
	(\Delta_h \phi)_{i,j} = - \frac{1}{h^2} \big( 4 \phi_{i,j} - \phi_{i+1,j} - \phi_{i-1,j}  - \phi_{i,j+1} - \phi_{i,j-1}\big), \qquad \forall \, i,j\in\{0,\dots,N_h-1\}.
\end{equation*}
Finally, for any $(\phi^n)_{n \in\{0,\dots, N_T\}} \in \R^{N_T}$, consider the discrete time derivative:
\begin{equation*}
	(D_t \phi)^n = \frac{\phi^n - \phi^{n-1}}{\Delta t}, \qquad \forall \, n\in\{1,\dots,N_T\}.
\end{equation*}

The discrete Hamiltonian is the form $H_h(x,m, [D_h \phi])$ where $H_h : \T^2\times \R_+\times \R^4\to \R$ is given by
\begin{equation*}
	H_h(x,m,p_1,p_2,p_3,p_4) = - m^{-\alpha} \left( (p_1^-)^2 + (p_2^+)^2 + (p_3^-)^2 + (p_4^+)^2 \right)^{\beta/2} + \ell(x,m).
\end{equation*}
Although  $H_h$ does not depend explicitly on $h$, 
we use the index $h$ to distinguish the discrete Hamiltonian from the original one, namely $H$.
 We see that $H_h$ has the following properties:
\begin{itemize}
	\item \emph{monotonicity} : $H_h(x,m,p_1,p_2,p_3,p_4)$ is nondecreasing with respect to $p_1$ and $p_3$, and nonincreasing with respect to $p_2$ and $p_4$
	\item \emph{consistency} : $H_h(x,m,p_1,p_1,p_2,p_2) = H(x,m,p)$, $\forall x\in \T^2, m \geq 0, p=(p_1, p_2) \in \RR^2$
	\item \emph{differentiability} :  $H_h(x,m,p_1,p_2,p_3,p_4)$ is of class $\mathcal C^1$ w.r.t. $p_1, p_2, p_3, p_4$
	\item \emph{concavity} : $(p_1, p_2, p_3, p_4) \mapsto H_h(x,m, p_1, p_2, p_3, p_4)$ is concave.
\end{itemize}
Apart from the dependency on $m$, the discrete Hamiltonian $H_h$  is constructed in the same way as in the 
finite difference schemes proposed in \cite{MR2679575,MR3135339}
for mean field games.  The properties stated above made it possible to prove existence and uniqueness (under some additional assumptions) for the 
solutions of the discrete problems in  the MFGs'case, and to prove convergence to  either classical or weak solutions see \cite{MR3097034,MR3452251}.
The monotone character of the discrete Hamiltonian played a key role in all the latter results; this is precisely the  reason
 why we  prefer this kind of discrete Hamiltonian to the central schemes chosen in \cite{BenamouCarlier2015ALG2}.
 Note that a similar scheme was also used for MFGs
with congestion, see \cite{YAJML}.

\subsection{Discrete version of problem $\mathcal A_h$}
We introduce the discrete version of problem \eqref{eq:pbA}:
\begin{align}
	\mathcal A_h = 
	\sup_{\phi\in\R^N} \min_{m \in (\R_+)^N} \Big\{ &h^2 \Delta t \sum_{n=1}^{N_T} \sum_{i,j=0}^{N_h-1}  m_{i,j}^n \Big[(D_t \phi_{i,j})^n + \nu(\Delta_h \phi^{n-1})_{i,j}   +   H_h\left(x_{i,j}, m_{i,j}^n, [\grad_h \phi^{n-1}  ]_{i,j}
 \right)       \Big] \notag \\
	& -\chi_T(\phi) + h^2 \sum_{i,j=0}^{N_h-1} {\widetilde m}_{i,j}^0 \phi_{i,j}^0 \Big\} \label{eq:pbA1}
\end{align}
where 
\begin{equation*}
\chi_T(\phi) =
	\begin{cases}
		+\infty &\hbox{ if $\exists \, i,j$ s.t. $\phi_{i,j}^{N_T} \neq {\widetilde u}_{i,j}^{N_T}$} \\
		0 &\hbox{ otherwise.}
	\end{cases}
\end{equation*}
We can formulate $\mathcal A_h$ in terms of a convex problem as follows
\begin{align}
	\mathcal A_h 
	&= - \inf_{\phi \in \R^N} \big\{ \mathcal F_h(\phi) +  \mathcal G_h (\Lambda_h (\phi)) \big\},  \label{eq:pbA2}
\end{align}
where $\Lambda_h : \R^N \to \R^{5N'}$ is defined by~: $\forall \, n\in\{1,\dots,N_T\}, \forall \, i,j\in\{0,\dots,N_h-1\}$,
\begin{align*}
	(\Lambda_h(\phi))_{i,j}^n = \, 
		&\left( (D_t \phi_{i,j})^n +\nu\left(\Delta_h \phi^{n-1}\right)_{i,j}, [\grad_h \phi^{n-1}]_{i,j}  \right),
\end{align*}
(with the notation introduced above)
and $\mathcal F_h : \R^N \to \R \cup \{+\infty\}$ and $\mathcal G_h : \R^{5N'} \to \R \cup \{+\infty\}$ are the two lower semi-continuous proper functions defined by: for all $\phi \in \RR^N$, for all $(a,b,c,\tilde b, \tilde c) \in \RR^{5N'}$:
\begin{align*}
 \mathcal F_h(\phi) = \, &\chi_T(\phi) - h^2 \sum_{i,j=0}^{N_h-1} {\widetilde m}_{i,j}^0 \phi_{i,j}^0, \\
	\hbox{and } \quad 	\mathcal G_h(a,b,c,\tilde b,\tilde c) = &- \min_{m \in (\R_+)^N} \left\{ h^2 \Delta t \sum_{n=1}^{N_T} \sum_{i,j=0}^{N_h-1} m_{i,j}^n \left( a_{i,j}^{n} + H_h(x_{i,j}, m_{i,j}^n, b_{i,j}^{n}, c_{i,j}^{n}, \tilde b_{i,j}^{n}, \tilde c_{i,j}^{n})\right) \right\}\\
	= &-  h^2 \Delta t \sum_{n=1}^{N_T} \sum_{i,j=0}^{N_h-1} K_h(x_{i,j}, a_{i,j}^n, b_{i,j}^{n}, c_{i,j}^{n}, \tilde b_{i,j}^{n}, \tilde c_{i,j}^{n}),
\end{align*}
with 
\begin{align*}
	K_h(x, a_0, p_1, p_2, p_3, p_4) = &\min_{m \in \R_+} \big\{ m(a_0 + H_h(x, m, p_1, p_2, p_3, p_4) )\big\}.
\end{align*}
Note that $K_h$ is nonpositive. 
By definition of $H_h$, $K_h$ is concave in $(a_0, p_1, p_2, p_3, p_4)$,
 hence $\mathcal G_h$ is  convex.

\subsection{The dual version of problem $\mathcal A_h$}\label{sec:discreteDuality}
We will also need the dual version of problem $\mathcal A_h$.
From \eqref{eq:pbA2}, by Fenchel-Rockafellar theorem (see e.g. \cite{MR1451876}, Corollary 31.2.1), we deduce that the dual problem of $\mathcal A_h$ is:
\begin{equation}\label{eq:pbAdual1}
	\min_{\sigma \in \R^{5N'}} \big\{\mathcal F_h^*(\Lambda_h^*(\sigma)) + \mathcal G_h^*(-\sigma)\big\},
\end{equation}
where $\mathcal G_h^*$ and $\mathcal F_h^*$ are respectively the Legendre-Fenchel conjugates of $\mathcal G_h$ and $\mathcal F_h$, defined by:
 for all $\sigma \in \RR^{5N'}$, all $x\in\TT_h^2$, and all $\mu \in \RR^N$,
\begin{align}
	\mathcal F_h^*(\mu) = &\sup_{\phi \in \R^N} \Big\{ \langle\mu , \phi\rangle_{\ell^2(\R^{N})} - \mathcal F_h(\phi) \Big\},\notag
	\\
        \mathcal G_h^*(-\sigma)
	= &\,\max_{q \in \R^{5N'}} \Big\{ - \langle\sigma , q\rangle_{\ell^2(\R^{5N'})}  - \mathcal G_h(q)  \Big\} \notag \\
	= &\,\max_{q} \Big\{ h^2 \Delta t  \sum_{n=1}^{N_T} \sum_{i,j=0}^{N_h-1}\Big[ - \sigma_{i,j}^n \cdot q_{i,j}^n + K_h(x_{i,j},  q_{i,j}^{n})\Big] \Big\}  
	= \, h^2 \Delta t  \sum_{n=1}^{N_T} \sum_{i,j=0}^{N_h-1} \tilde L_h(x_{i,j}, \sigma_{i,j}^n) \label{eq:GstarLbar}
      \end{align}
with      
\begin{displaymath}
  \tilde L_h(x,\sigma_0) = \max_{q_0 \in \RR^{5}} \big\{ -\sigma_0 \cdot q_0 + K_h(x,q_0) \big\}, \quad \forall \sigma_0 \in \R^5.
\end{displaymath}
Finally $\Lambda_h^*: \R^{5N'} \to \R^N$ denotes the adjoint of $\Lambda_h$, defined by: for all $(m,y,z,\tilde y,\tilde z) \in \RR^{5N'}$, and all $\phi \in \RR^N$:
\begin{align}
	\langle \Lambda_h^*(m,y,z,\tilde y,\tilde z) , \phi \rangle_{\ell^2(\R^{N})}
	&= \langle (m,y,z,\tilde y,\tilde z) , \Lambda_h(\phi) \rangle_{\ell^2(\R^{5N'})}
	\label{eq:defLambdastar}\\
	&= h^2 \Delta t  \sum_{n=1}^{N_T}  \sum_{i,j=0}^{N_h-1} \left[ m_{i,j}^n \left( \frac{\phi_{i,j}^n - \phi_{i,j}^{n-1}}{\Delta t} + \nu \left(\Delta_h \phi^{n-1}\right)_{i,j} \right) \right.
	\notag\\
		&\qquad\qquad\qquad\qquad + y_{i,j}^n \frac{\phi_{i+1,j}^{n-1}- \phi_{i,j}^{n-1}}{h} + z_{i,j}^n\frac{\phi_{i,j}^{n-1}- \phi_{i-1,j}^{n-1}}{h}
	\notag\\
		&\qquad\qquad\qquad\qquad \left.+ \tilde y_{i,j}^n \frac{\phi_{i,j+1}^{n-1}- \phi_{i,j}^{n-1}}{h} + \tilde z_{i,j}^n\frac{\phi_{i,j}^{n-1}- \phi_{i,j-1}^{n-1}}{h} \right]
	 \notag\\
	&= h^2 \Delta t \sum_{i,j=0}^{N_h-1} \left[  \sum_{n=0}^{N_T-1} \frac{m_{i,j}^{n} - m_{i,j}^{n+1}}{\Delta t}\phi_{i,j}^n + \frac{1}{\Delta t} m_{i,j}^{N_T} \phi_{i,j}^{N_T} - \frac{1}{\Delta t} m_{i,j}^{0} \phi_{i,j}^{0}  \right]
	 \notag\\
	&\qquad + h^2 \Delta t \sum_{i,j=0}^{N_h-1} \sum_{n=0}^{N_T-1} \nu \left(\Delta_h m^{n+1}\right)_{i,j}  \phi_{i,j}^n 
	 \notag\\
	&\qquad + h^2 \Delta t  \sum_{n=0}^{N_T-1} \sum_{i,j=0}^{N_h-1} \left[ -\frac{y_{i,j}^{n+1}- y_{i-1,j}^{n+1}}{h}\phi_{i,j}^{n}  
	 -\frac{z_{i+1,j}^{n+1}- z_{i,j}^{n+1}}{h} \phi_{i,j}^{n} \right]
	  \notag\\
	 &\qquad + h^2 \Delta t  \sum_{n=0}^{N_T-1} \sum_{i,j=0}^{N_h-1} \left[ -\frac{\tilde y_{i,j}^{n+1}- \tilde y_{i,j-1}^{n+1}}{h}\phi_{i,j}^{n}
	  -\frac{\tilde z_{i,j+1}^{n+1}- \tilde z_{i,j}^{n+1}}{h} \phi_{i,j}^{n} \right],
	 \notag
 \end{align}
where we used discrete integration by parts and the periodic boundary condition.
Hence
\begin{align*}
	\mathcal F_h^*(\Lambda_h^*(m,y,z,\tilde y,\tilde z)) = 
	\begin{cases}
		h^2 \sum_{i,j=0}^{N_h-1}  m_{i,j}^{N_T} {\widetilde u}_{i,j}^{N_T} &\hbox{ if $(m,y,z,\tilde y, \tilde z)$ satisfies 
\eqref{eq:FPdiscrete}} \hbox{ (see below)}, \\
		+\infty &\hbox{otherwise},
	\end{cases}
\end{align*}
with $\forall \, i,j\in\{0,\dots,N_h-1\}$, $m^0_{i,j} = \widetilde m^0_{i,j}$, and $\forall \, n\in\{0,\dots,N_T-1\}$:
\begin{equation}\label{eq:FPdiscrete}
\frac{m_{i,j}^{n+1} - m_{i,j}^{n}}{\Delta t} - \nu\left(\Delta_h m^{n+1}\right)_{i,j} + \frac{y_{i,j}^{n+1} - y_{i-1,j}^{n+1}}{h} + \frac{z_{i+1,j}^{n+1} - z_{i,j}^{n+1}}{h} + \frac{\tilde y_{i,j}^{n+1} - \tilde y_{i-1,j}^{n+1}}{h} + \frac{\tilde z_{i,j+1}^{n+1} - \tilde z_{i,j}^{n+1}}{h} = 0.
\end{equation}
Hence,  the dual of problem $\mathcal A_h$ takes the form:
\begin{align}
	\min_{(m,y,z, \tilde y, \tilde z) \in \R^{5N'}} &\left\{ h^2 \Delta t  \sum_{n=1}^{N_T} \sum_{i,j=0}^{N_h} \tilde L_h(x_{i,j}, m_{i,j}^n, y_{i,j}^n, z_{i,j}^n, \tilde y_{i,j}^n, \tilde z_{i,j}^n) + h^2 \sum_{i,j=0}^{N_h}  m_{i,j}^{N_T} {\widetilde u}_{i,j}^{N_T} \right\} \label{eq:pbAdual2} \\
	&\hbox{ subject to \eqref{eq:FPdiscrete}.}\notag
\end{align}
Let us now compute an equivalent expression for $\tilde L_h$. We note $\mathcal D_{\tilde L_h}$ the domain where, for any $x \in \TT_h^2$, $\tilde L_h(x,\cdot,\cdot,\cdot,\cdot,\cdot) < +\infty$, that is:
\begin{equation*}
	\mathcal D_{\tilde L_h} = \{(0,0,0,0,0)\} \cup \{(m,y,z,\tilde y, \tilde z) \in\R^5 ~:~ m>0, y \geq 0, z \leq 0, \tilde y \geq 0, \tilde z \leq 0\}.
\end{equation*}
For $x \in \TT_h^2$ and $(m,y,z,\tilde y, \tilde z) \in \mathcal D_{\tilde L_h}$, we have
\begin{align}
	\tilde L_h(x,m,y,z,\tilde y,\tilde z)
	= &\max_{a,b,c,\tilde b,\tilde c} \big\{ -(m, y, z, \tilde y, \tilde z)  \cdot (a,b,c,\tilde b,\tilde c) + K_h(x,a,b,c,\tilde b,\tilde c) \big\}
	\notag \\
	= &\max_{a,b,c,\tilde b,\tilde c} \big\{ -(m, y, z, \tilde y, \tilde z)  \cdot (a,b,c,\tilde b,\tilde c) + \min_{\mu \geq 0} \{\mu a + \mu H_h(x,\mu,b,c,\tilde b,\tilde c)\} \big\}
	\notag \\
	= &\max_{b,c,\tilde b,\tilde c} \max_{a} \min_{\mu \geq 0} \big\{ a (\mu-m) -(y, z, \tilde y, \tilde z)  \cdot (b,c,\tilde b,\tilde c) + \mu H_h(x,\mu,b,c,\tilde b,\tilde c) \big\}
	\notag \\
	= & \max_{b,c,\tilde b,\tilde c} \big\{ -(y, z, \tilde y, \tilde z)  \cdot (b,c,\tilde b,\tilde c)  + mH_h(x,m,b,c,\tilde b,\tilde c) \big\}. \label{eq:lastMaxLtilde}
\end{align}
where the last equality holds by Fenchel-Moreau Theorem (note that for all $x,b, c, \tilde b$ and $\tilde c$, $m \mapsto m H_h(x,m,b,c,\tilde b,\tilde c)$ is convex and l.s.c.).
\\
From~\eqref{eq:lastMaxLtilde}, we can  express $\tilde L_h(x,m,y,z,\tilde y,\tilde z)$ as follows:
\begin{equation*}
	\tilde L_h(x,m,y,z,\tilde y,\tilde z)
	=
	\begin{cases}
 	(\beta-1)\beta^{-\beta^*} \frac{ \left( y^2 + z^2 + \tilde y^2 + \tilde z^2 \right)^{\beta^*/2}} {m^{(\beta^*-1)(1-\alpha)}}  + m \ell(x,m)
&\hbox{ if $m>0, y \geq 0, z \leq 0, \tilde y \geq 0, \tilde z \leq 0$} \\
		0 &\hbox{ if $m=y=z=\tilde y=\tilde z=0$} \\
		+\infty &\hbox{ otherwise.}
	\end{cases}
\end{equation*}

\subsection{Augmented Lagrangian}
\label{sec:augmented-lagrangian}
Let us go back to the primal formulation of $\mathcal A_h$. Trying to  directly find a minimum of \eqref{eq:pbA2} is difficult because $\phi$
 is involved in both $\mathcal F_h$ and $\mathcal G_h$. Instead, we can artificially separate the arguments of  $\mathcal F_h$ and $\mathcal G_h$, by 
introducing a new argument $q$  of $\mathcal G_h$ and adding the constraint that  $q=\Lambda_h(\phi)$. With this approach, the problem 
becomes
\begin{align}
	\mathcal A_h 
	= - &\inf_{\phi \in \R^N} \inf_{q \in \R^{5N'}}\big\{ \mathcal F_h(\phi) + \mathcal G_h (q) \big\} \notag \\
	&\hbox{ subject to $q = \Lambda_h(\phi)$.} \label{eq:pbA3}
\end{align}
The Lagrangian corresponding to this constrained optimization problem is
\begin{equation}\label{eq:discreteL}
	\mathcal L_h (\phi, q, \sigma) = \mathcal F_h(\phi) + \mathcal G_h(q) - \langle \sigma , (\Lambda_h(\phi) - q)  \rangle_{\ell^2(\R^{5N'})}
\end{equation}
where $\sigma$ is the dual variable corresponding to the constraint in \eqref{eq:pbA3}.

Finding a minimizer of \eqref{eq:pbA3} is equivalent to finding a saddle-point of $\mathcal L_h$, so the goal is now to obtain $(\phi, q, \sigma)$ achieving
\begin{equation*}
	\inf_{\phi\in\R^N} \inf_{q\in\R^{5N'}} \sup_{\sigma\in\R^{5N'}} \mathcal L_h (\phi, q, \sigma).
\end{equation*}

We are going to use an alternating direction algorithm based on an augmented Lagrangian. Augmented Lagrangian algorithms consist of adding  a penalty term to the Lagrangian (whereas penalty methods add a penalty term to the objective), and solving a sequence of unconstrained optimization problems. They were first discussed in~\cite{MR0271809,MR0272403} and later in~\cite{FortinGlowinski2000augmented}. We will see that, under appropriate assumptions, the algorithm produces a sequence that converges to the solution of the original constrained problem, for every choice of a positive penalty parameter.
Therefore, unlike penalty methods, it is not necessary to have the penalty parameter tend to infinity in order to obtain the solution of the original constrained problem.

 For $r>0$, we introduce the augmented Lagrangian:
\begin{align}\label{eq:defAugLag}
	\mathcal L_h^r (\phi, q, \sigma) = \mathcal L_h (\phi, q, \sigma) + \frac r 2 \left|\left| \Lambda_h(\phi) - q \right|\right|_{\ell^2(\R^{5N'})}^2.
\end{align}
Note that the saddle-points of $\mathcal L_h$ and $\mathcal L_h^r$ are the same (see e.g. \cite{FortinGlowinski2000augmented}, \cite{GabayMercier1976dual}).
\\
In the sequel, we propose an Alternating Direction Method of Multipliers (ADMM) based on $	\mathcal L_h^r$.

\subsection{Alternating Direction Method of Multipliers for $\mathcal L_h^r$}\label{subsec:algoADMM}
\paragraph{Assumption} 
Hereafter, we take $\nu=0$.  In other words, we focus on deterministic mean field type control problem.  We have already explained in the introduction that it is possible to address the case $\nu>0$ with the same method, with some additional difficulties that are dealt with in   \cite{andreev2016preconditioning} and that we do not wish to tackle in the present paper.

\medskip

For general considerations on augmented Lagrangians and Alternating Direction Method of Multipliers, the reader is referred  to 
\cite{eckstein2012augmented}.
The algorithm that we use is a variant of the  algorithm referred to as  {\sl ALG2}   in \cite{BenamouCarlier2015ALG2}, 
a terminology used initially by Fortin and Glowinski in   \cite{FortinGlowinski2000augmented}, Chapter 3, Section~3,
to distinguish between  two possible  alternating direction methods of multipliers (ADMM).  As explained in~\cite{FortinGlowinski2000augmented}, Chapter~3, Remark~3.5, an iteration  of  {\sl ALG2}  is cheaper than one iteration of the other algorithm, namely {\sl ALG1}. This explains our choice of {\sl ALG2}.\\
The ADMM constructs a sequence of approximations of the solution, and  each iteration  is split into three steps.
 For simplicity, we will note $q = (a,b,c, \tilde b, \tilde c)\in\RR^{5N'}$ and $\sigma = (m,y,z, \tilde y, \tilde z)\in\RR^{5N'}$. 
Starting from an initial guess $(\phi^0, q^0, \sigma^0)$, we generate  a sequence indexed by $k\geq 0$:
\begin{align}
	& \ds \phi^{k+1} \in \argmin_{\phi \in \R^N} \Big\{
		\mathcal F_h(\phi) - \langle \sigma^k , \Lambda_h(\phi)  \rangle_{\ell^2(\R^{5N'})}  + \frac r 2 \left|\left| \Lambda_h(\phi) - q^k \right|\right|_{\ell^2(\R^{5N'})}^2
	 \Big\}, \label{eq:update-phi}\\
	& \ds q^{k+1} \in \argmin_{q \in \R^{5N'}} \Big\{
		 \mathcal G_h(q) + \langle \sigma^k , q  \rangle_{\ell^2(\R^{5N'})}  + \frac r 2 \left|\left| \Lambda_h(\phi^{k+1}) - q \right|\right|_{\ell^2(\R^{5N'})}^2
	 \Big\}, \label{eq:update-q}\\
	 &\ds \sigma^{k+1} = \sigma^k - r \Big(\Lambda_h(\phi^{k+1}) - q^{k+1} \Big). \label{eq:update-sigma}
\end{align}
The first two equations are proximal problems, whereas the last one is an explicit update. The link between proximal problems, ADMM and augmented Lagrangians is well known (see for instance~\cite{BoydParikh2014proximal}, Section 4 of Chapter 4).
We detail below how to implement this algorithm in our case. Updating $\phi$ and $q$ will be done respectively by solving a  boundary value problem involving a discrete 
version of partial differential equation for  $\phi$ and by reducing the proximal problem for $q$ to a single equation in $\RR_+$ (see~\eqref{eq:finalEqMu}) at each grid node.

 \begin{remark}\label{rem:convergenceADMM}
 As explained for instance in~\cite{EcksteinBertsekas1992douglas}, the augmented Lagrangian ADMM is a special case of the Douglas-Rachford splitting method for finding the zeros of the sum of two maximal monotone operators. As a consequence, the convergence of our ADMM algorithm holds since the following two conditions are satisfied (see Theorem 8 in~\cite{EcksteinBertsekas1992douglas}, following the contributions of~\cite{GabayMercier1976dual,LionsMercier1979splitting}; see also Section~5 of Chapter~3 in \cite{FortinGlowinski2000augmented}).
 \begin{enumerate}
 	\item $\Lambda_h$ has full column rank when it is considered on the space $\{ \phi \in \R^{5N} \,:\, \phi^{N_T} \equiv \widetilde u^{N_T}\}$.
 Indeed, $\Lambda_h$ is a discrete time-space gradient operator, so it is injective over the space of functions with fixed final values.
	\item  There exists a pair $(\phi, \sigma)$ that satisfies the following primal-dual extremality relations: 
		\begin{equation*}
			\Lambda_h^* \sigma \in \partial \mathcal F_h(\phi), \quad -\sigma \in \partial \mathcal G_h(\Lambda_h \phi)
		\end{equation*}
		where $\Lambda_h^*$ is defined by \eqref{eq:defLambdastar} in Section~\ref{sec:discreteDuality},
 and $\partial \mathcal F_h, \partial \mathcal G_h$ denote the subdifferentials of $\mathcal F_h$ and $\mathcal G_h$. 
 This is equivalent to the  existence of a solution to the discrete problem, 
which can be obtained as in ~\cite{AchdouCamilliCapuzzoDolcetta2012Planning}, Section 3.1. 
\end{enumerate}
By~\eqref{eq:update-q}, $\sigma^{k} - r(\Lambda_h(\phi^{k+1}) - q^{k+1}) \in  \partial \mathcal G_h(q^{k+1})$,
 and \eqref{eq:update-sigma} means that $\sigma^{k+1}\in \partial \mathcal G_h(q^{k+1})$ for every $k$.
\\
Furthermore, we see that $\mathcal G_h(q^{k+1}) <+\infty$ implies that $\mathcal G_h^*(\sigma^{k+1}) = \sigma^{k+1} \cdot q^{k+1} - \mathcal G_h (q^{k+1})$. So $\sigma^{k+1}$ is in the domain of $\mathcal G_h^*$, which implies, by~\eqref{eq:GstarLbar}, $\tilde L_h(x_{i,j},(\sigma^{k+1})_{i,j}^n) < +\infty$ for every $(i,j,n)$. In particular $(m^{k+1})_{i,j}^n \geq 0$, and $(z^{k+1})_{i,j}^n = 0$ whenever $(m^{k+1})_{i,j}^n = 0$. This explains why ALG2 gives consistent results even if the density vanishes, as already observed in~\cite{BenamouBrenier2000monge}.
 \end{remark}
When $k$ tends to $+\infty$, $\Lambda_h(\phi^k)$ and $q^k$ converge to the same limit. Moreover, the increment  $\sigma^{k+1} -\sigma^k $ of the  dual variable
 is the difference between these two terms (scaled by $r$). For the numerical convergence criteria, besides the error between $\Lambda_h(\phi^k)$ and $q^k$, we will use the $\ell^2$ norm of the 
residuals of the discrete versions of the HJB equation, (see \S~\ref{subsec:convergenceCrit}).
\\
We now give some more details on the three different steps:
 
\paragraph{Step 1 : update of $\phi$ :}
To alleviate the notations, let us drop the superscript $k$ and note in this step $q = (a,b,c, \tilde b, \tilde c) = (a^k,b^k,c^k, \tilde b^k, \tilde c^k) = q^k$ and $\sigma = (m,y,z, \tilde y, \tilde z) = (m^k,y^k,z^k,\tilde y^k,\tilde z^k) = \sigma^k$. 
Note that  $\mathcal F_h(\phi)$ is finite   if and only if   $\phi^{N_T} = \tilde u^{N_T}$.
We are therefore looking for  $\phi \in \R^N$ satisfying:
\begin{enumerate}
	\item $\forall \, i,j = 0, \dots, N_h-1, \quad \phi_{i,j}^{N_T} = {\widetilde u}_{i,j}^{N_T}$,
	\item for any $\psi \in \R^N$
\begin{equation}\label{eq:cond2phi}
	0 = D \mathcal F_h(\phi) \psi - \sigma \cdot D\Lambda_h(\phi)\psi + r \left(\Lambda_h(\phi) - q\right) \cdot  D\Lambda_h(\phi)\psi. 
\end{equation}
\end{enumerate}
 If  $\phi$ satisfies the first condition, then the second condition~\eqref{eq:cond2phi} can be written as follows:
\begin{align*}
	0
	=
	& -h^2 \sum_{i,j=0}^{N_h-1} {\widetilde m}_{i,j}^0 \psi_{i,j}^0 \\
	&+ h^2 \Delta t \sum_{i,j=0}^{N_h-1} \left(r \frac{{\widetilde u}_{i,j}^{N_T} - \phi_{i,j}^{N_T-1}}{\Delta t} - ra_{i,j}^{N_T}  - m_{i,j}^{N_T}\right) \frac{ - \psi_{i,j}^{N_T-1}}{\Delta t} \\
	&+ h^2 \Delta t \sum_{n=1}^{N_T-1} \sum_{i,j=0}^{N_h-1} \left(r \frac{\phi_{i,j}^{n} - \phi_{i,j}^{n-1}}{\Delta t} - ra_{i,j}^{n}  - m_{i,j}^{n}\right) \frac{\psi_{i,j}^n - \psi_{i,j}^{n-1}}{\Delta t} \\
	& +h^2 \Delta t \sum_{n=1}^{N_T} \sum_{i,j=0}^{N_h-1} \left(r\frac{\phi_{i+1,j}^{n-1} - \phi_{i,j}^{n-1}}{h} - rb_{i,j}^{n}  - y_{i,j}^{n}\right) \frac{\psi_{i+1,j}^{n-1} - \psi_{i,j}^{n-1}}{h} \\
	& +h^2 \Delta t \sum_{n=1}^{N_T} \sum_{i,j=0}^{N_h-1} \left(r\frac{\phi_{i,j}^{n-1} - \phi_{i-1,j}^{n-1}}{h} - rc_{i,j}^{n}  - z_{i,j}^{n}\right) \frac{\psi_{i,j}^{n-1} - \psi_{i-1,j}^{n-1}}{h} \\
	& +h^2 \Delta t \sum_{n=1}^{N_T} \sum_{i,j=0}^{N_h-1} \left(r\frac{\phi_{i,j+1}^{n-1} - \phi_{i,j}^{n-1}}{h} - r\tilde b_{i,j}^{n}  - \tilde y_{i,j}^{n}\right) \frac{\psi_{i,j+1}^{n-1} - \psi_{i,j}^{n-1}}{h} \\
	& +h^2 \Delta t \sum_{n=1}^{N_T} \sum_{i,j=0}^{N_h-1} \left(r\frac{\phi_{i,j}^{n-1} - \phi_{i,j-1}^{n-1}}{h} - r \tilde c_{i,j}^{n}  - \tilde z_{i,j}^{n}\right) \frac{\psi_{i,j}^{n-1} - \psi_{i,j-1}^{n-1}}{h}.
\end{align*}
By discrete integration by parts and periodicity, the right hand side can be written as follows:
\begin{align*}
	\,&h^2 \Delta t \sum_{i,j=0}^{N_h-1} \left( r\frac{2\phi_{i,j}^{N_T-1}- {\widetilde u}_{i,j}^{T} - \phi_{i,j}^{N_T-2}}{(\Delta t)^2} - r\frac{a_{i,j}^{N_T-1} - a_{i,j}^{N_T}}{\Delta t} - \frac{m_{i,j}^{N_T-1} - m_{i,j}^{N_T}}{\Delta t} \right) \psi_{i,j}^{N_T-1} \\
	&+ h^2 \Delta t \sum_{i,j=0}^{N_h-1} \left( r\frac{-\phi_{i,j}^{1} + \phi_{i,j}^{0}}{(\Delta t)^2} + r\frac{a_{i,j}^{1}}{\Delta t} + \frac{m_{i,j}^{1}}{\Delta t} - \frac{m_{i,j}^0}{\Delta t}\right) \psi_{i,j}^{0} \\
	&+ h^2 \Delta t \sum_{n=1}^{N_T-2}\sum_{i,j=0}^{N_h-1} \left( r\frac{2 \phi_{i,j}^{n} - \phi_{i,j}^{n-1} - \phi_{i,j}^{n+1}}{(\Delta t)^2} - r\frac{a_{i,j}^{n} - a_{i,j}^{n+1}}{\Delta t} - \frac{m_{i,j}^{n} - m_{i,j}^{n+1}}{\Delta t} \right) \psi_{i,j}^{n} \\
	& + h^2 \Delta t \sum_{n=0}^{N_T-1} \sum_{i,j=0}^{N_h-1} \Big(2r\frac{4\phi_{i,j}^{n} - \phi_{i-1,j}^{n} - \phi_{i+1,j}^{n}  - \phi_{i,j-1}^{n} - \phi_{i,j+1}^{n}}{h^2}  \\
	&\qquad\qquad\qquad - r\frac{b_{i-1,j}^{n+1} - b_{i,j}^{n+1}}{h}  - r\frac{c_{i,j}^{n+1} - c_{i+1,j}^{n+1}}{h} - r\frac{\tilde b_{i,j-1}^{n+1} - \tilde b_{i,j}^{n+1}}{h}  - r\frac{\tilde c_{i,j}^{n+1} - \tilde c_{i,j+1}^{n+1}}{h} \\
	&\qquad\qquad\qquad  -  \frac{y_{i-1,j}^{n+1} - y_{i,j}^{n+1}}{h} - \frac{z_{i,j}^{n+1} - z_{i+1,j}^{n+1}}{h}  -  \frac{\tilde y_{i,j-1}^{n+1} - \tilde y_{i,j}^{n+1}}{h} - \frac{\tilde z_{i,j}^{n+1} - \tilde z_{i,j+1}^{n+1}}{h}  \Big) \psi_{i,j}^{n}.
\end{align*}

We deduce that $\phi$ must satisfy the finite difference equation~:
$\forall i,j=0, \dots, N_h-1$,
\begin{align}
	&r\frac{2\phi_{i,j}^{N_T-1} - \phi_{i,j}^{N_T-2}}{(\Delta t)^2} + 2r\frac{4\phi_{i,j}^{N_T-1} - \phi_{i-1,j}^{N_T-1} - \phi_{i+1,j}^{N_T-1}  - \phi_{i,j-1}^{N_T-1} - \phi_{i,j+1}^{N_T-1}}{h^2} 
	\notag \\
	=\, & r\frac{{\widetilde u}_{i,j}^{N_T}}{(\Delta t)^2}
	\notag \\
	&-\left( \frac{m_{i,j}^{N_T} - m_{i,j}^{N_T-1}}{\Delta t}  + \frac{y_{i,j}^{N_T} - y_{i-1,j}^{N_T}}{h} + \frac{z_{i+1,j}^{N_T} - z_{i,j}^{N_T}}{h}  + \frac{\tilde y_{i,j}^{N_T} - \tilde y_{i,j-1}^{N_T}}{h} + \frac{\tilde z_{i,j+1}^{N_T} - \tilde z_{i,j}^{N_T}}{h} \right)
	\notag \\
	&- r\left( \frac{a_{i,j}^{N_T} - a_{i,j}^{N_T-1}}{\Delta t} + \frac{b_{i,j}^{N_T} - b_{i-1,j}^{N_T}}{h}  + \frac{c_{i+1,j}^{N_T} - c_{i,j}^{N_T}}{h} + \frac{\tilde b_{i,j}^{N_T} - \tilde b_{i,j-1}^{N_T}}{h}  + \frac{\tilde c_{i,j+1}^{N_T} - \tilde c_{i,j}^{N_T}}{h} \right)
	\label{eq:step1eqPhitT}
\end{align}
and $\forall n = 1, \dots, N_T-2, \forall i,j=0, \dots, N_h-1$,
\begin{align}
	&r\frac{2 \phi_{i,j}^{n} - \phi_{i,j}^{n-1} - \phi_{i,j}^{n+1}}{(\Delta t)^2} +2 r\frac{4\phi_{i,j}^{n} - \phi_{i-1,j}^{n} - \phi_{i+1,j}^{n}  - \phi_{i,j-1}^{n} - \phi_{i,j+1}^{n}}{h^2}
	\notag \\
	& = -\left( \frac{m_{i,j}^{n+1} - m_{i,j}^{n}}{\Delta t} +  \frac{y_{i,j}^{n+1} - y_{i-1,j}^{n+1}}{h} + \frac{z_{i+1,j}^{n+1} - z_{i,j}^{n+1}}{h} +  \frac{\tilde y_{i,j}^{n+1} - \tilde y_{i,j-1}^{n+1}}{h} + \frac{\tilde z_{i,j+1}^{n+1} - \tilde z_{i,j}^{n+1}}{h} \right)
	\notag \\
	&\qquad -r\left( \frac{a_{i,j}^{n+1} - a_{i,j}^{n}}{\Delta t} + \frac{b_{i,j}^{n+1} - b_{i-1,j}^{n+1}}{h} + \frac{c_{i+1,j}^{n+1} - c_{i,j}^{n+1}}{h}  + \frac{\tilde b_{i,j}^{n+1} - \tilde b_{i,j-1}^{n+1}}{h} + \frac{\tilde c_{i,j+1}^{n+1} - \tilde c_{i,j}^{n+1}}{h}\right)
	\label{eq:step1eqPhi}
\end{align}
with periodic boundary conditions and the condition  at $t=0$: $\forall i=1,\dots, N_h$
\begin{align}
 & r\frac{\phi_{i,j}^{0} - \phi_{i,j}^{1}}{(\Delta t)^2}  +2 r\frac{4\phi_{i,j}^{0} - \phi_{i-1,j}^{0} - \phi_{i+1,j}^{0}  - \phi_{i,j-1}^{0} - \phi_{i,j+1}^{0}}{h^2}
 \notag \\
 =  &\, -\left(\frac{m_{i,j}^{1} - m_{i,j}^0}{\Delta t} +   \frac{y_{i,j}^{1} - y_{i-1,j}^{1}}{h} + \frac{z_{i+1,j}^{1} - z_{i,j}^{1}}{h} +  \frac{\tilde y_{i,j}^{1} - \tilde y_{i,j-1}^{1}}{h} + \frac{\tilde z_{i,j+1}^{1} - \tilde z_{i,j}^{1}}{h} \right)
 \notag \\
&-r\left( \frac{a_{i,j}^{1}}{\Delta t}  + \frac{b_{i,j}^{1} - b_{i-1,j}^{1}}{h} + \frac{c_{i+1,j}^{1} - c_{i,j}^{1}}{h}  + \frac{\tilde b_{i,j}^{1} - \tilde b_{i,j-1}^{1}}{h} + \frac{\tilde c_{i,j+1}^{1} - \tilde c_{i,j}^{1}}{h}\right).
	\label{eq:step1eqPhit0}
\end{align}
\begin{remark}
The system (\ref{eq:step1eqPhitT})-(\ref{eq:step1eqPhit0}) is the discrete version of a boundary value problem 
in $(0,T)\times \T^2$, involving a second order linear elliptic  partial differential equation.
\end{remark}

In our implementation,  (\ref{eq:step1eqPhitT})-(\ref{eq:step1eqPhit0}) is solved by using BiCGStab iterations, see  \cite{MR1149111}.

\paragraph{Step 2 : update of $q = (a,b,c, \tilde b, \tilde c)$ :}
To alleviate the notations, let us note in this step $\phi = \phi^{k+1}$ and $\sigma = (m,y,z, \tilde y, \tilde z) = (m^k,y^k,z^k,\tilde y^k,\tilde z^k) = \sigma^k$. Then we are looking for $q \in \R^{5N}$ satisfying:
\begin{align*}
	q  \, \in \, &\argmin_{q \in \R^{5N'}} \Big\{
		\mathcal G_h(q) + \langle \sigma , q \rangle_{\ell^2(\R^{5N'})}  + \frac r 2 \left|\left| \Lambda_h(\phi) - q \right|\right|_{\ell^2(\R^{5N'})}^2
	 \Big\},
\end{align*}
where $q \mapsto \mathcal G_h(q) - \langle \sigma , q \rangle_{\ell^2}  + \frac r 2 \left|\left| \Lambda_h(\phi) - q \right|\right|_{\ell^2}^2$ is convex.
This amounts to solving  a five-dimensional optimization problem at each grid node, i.e. for each $n \in \{1, \dots, N_T\}, i,j \in \{0, \dots, N_h-1\}$,
to finding $(a,b,c, \tilde b, \tilde c) \in \R^5$ that minimizes
\begin{align}
	&-K_h(x_{i,j}, a, b, c, \tilde b, \tilde c) + m_{i,j}^n a + y_{i,j}^n b + z_{i,j}^n c  + \tilde y_{i,j}^n \tilde b + \tilde z_{i,j}^n \tilde c \notag\\
	&+ \frac r 2 \left[ \left(\frac{\phi_{i,j}^n - \phi_{i,j}^{n-1}}{\Delta t}  - a\right)^2 + \left( \frac{\phi_{i+1,j}^{n-1} - \phi_{i,j}^{n-1}}{h} - b\right)^2 + \left( \frac{\phi_{i,j}^{n-1} - \phi_{i-1,j}^{n-1}}{h}   - c\right)^2 \right.\notag\\
	&\qquad\qquad + \left.\left( \frac{\phi_{i,j+1}^{n-1} - \phi_{i,j}^{n-1}}{h} - \tilde b\right)^2 + \left( \frac{\phi_{i,j}^{n-1} - \phi_{i,j-1}^{n-1}}{h}   - \tilde c\right)^2\right]. \label{eq:tmpMinPbK}
\end{align}
This task is not trivial because the definition of $K_h$ itself involves a minimization. However, we can simplify the expression~\eqref{eq:tmpMinPbK} as follows: we notice that, for all $x\in\TT_h^2$, and all $\sigma' \in \mathcal D_{\tilde L_h}$,
\begin{equation*}
	\tilde L_h(x, \sigma') = \max_{q \in \RR^5}\Big\{ - \sigma' \cdot q  + K_h(x,q)\Big\} = (-K_h)^*(x,-\sigma').
\end{equation*}
Hence,  by Fenchel-Moreau's theorem, for any $q \in \R^{5}$,
\begin{equation*}
	-K_h(x,q) = \max_{\sigma'\in \R^5} \Big\{ -  \sigma' \cdot q - \tilde L_h(x, \sigma')\Big\}.
\end{equation*}
Note that any maximizer $\sigma' = (\mu,\eta,\zeta,\tilde\eta,\tilde\zeta)$ should be in $\mathcal D_{\tilde L_h}$, that is, should satisfy either $(\mu,\eta,\zeta,\tilde\eta,\tilde\zeta) = (0,0,0,0,0)$, or $\mu>0, \eta \geq 0, \zeta \leq 0, \tilde \eta\geq 0$ and $\tilde\zeta\leq 0$.
Plugging this into \eqref{eq:tmpMinPbK} leads us to the following saddle-point problem:
\begin{equation}\label{pb:infmax1}
	\inf_{q \in \RR^5} \max_{\sigma' \in \mathcal D_{\tilde L_h}} U(q,\sigma')
\end{equation}
where
$$
	U(q,\sigma') = (\sigma_{i,j}^n - \sigma') \cdot q - \tilde L_h(x_{i,j}, \sigma')  + \frac r 2 \left|\left| \Lambda_h(\phi)_{i,j}^n - q \right|\right|_{2}^2,
$$
is concave in $\sigma' = (\mu,\eta,\zeta,\tilde \eta,\tilde \zeta)$, and convex in $q = (a,b,c, \tilde b, \tilde c)$. 
 The following lemma allows us  to swap the inf and the max in the expression above:
\begin{lemma}\label{lem:swapInfMax}
	$$
	\inf_{q \in \RR^5} \sup_{\sigma' \in \R^5
        } U(q,\sigma')
	=
	\sup_{\sigma' \in \R ^5 
} \inf_{q \in \RR^5} U(q,\sigma').
	$$
\end{lemma}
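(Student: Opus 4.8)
The identity is a minimax statement for the convex--concave function $U$, and since $\sup_{\sigma'}\inf_q U\le\inf_q\sup_{\sigma'}U$ always holds, the only thing to prove is the reverse inequality. The plan is to verify the hypotheses of a minimax theorem for convex--concave functions with coercivity (see, e.g., \cite{MR1451876}). First I would record the structural properties. For each $\sigma'\in\mathcal D_{\tilde L_h}$ the map $q\mapsto U(q,\sigma')$ is a strictly convex quadratic, hence convex and continuous on $\RR^5$; for $\sigma'\notin\mathcal D_{\tilde L_h}$ it is $\equiv-\infty$. For each $q$, $\sigma'\mapsto U(q,\sigma')$ is the sum of an affine map and $-\tilde L_h(x_{i,j},\cdot)$, hence concave and upper semicontinuous on $\RR^5$, because $\tilde L_h(x_{i,j},\cdot)$, being a Legendre--Fenchel conjugate of $-K_h(x_{i,j},\cdot)$, is convex and lower semicontinuous (a supremum of affine functions) and equals $+\infty$ off $\mathcal D_{\tilde L_h}$.

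Next I would establish the two coercivity facts that compensate for the non-compactness of $\RR^5\times\RR^5$. On the $q$ side: taking $\sigma'=0\in\mathcal D_{\tilde L_h}$, the map $q\mapsto U(q,0)=\sigma_{i,j}^n\cdot q+\tfrac r2\|\Lambda_h(\phi)_{i,j}^n-q\|^2$ tends to $+\infty$ as $|q|\to\infty$ thanks to the quadratic penalization term; equivalently, $q\mapsto\sup_{\sigma'}U(q,\sigma')=\sigma_{i,j}^n\cdot q-K_h(x_{i,j},q)+\tfrac r2\|\Lambda_h(\phi)_{i,j}^n-q\|^2$ is coercive since $K_h\le0$. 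On the $\sigma'$ side: taking $q=0$, the map $\sigma'\mapsto U(0,\sigma')=-\tilde L_h(x_{i,j},\sigma')+\tfrac r2\|\Lambda_h(\phi)_{i,j}^n\|^2$ tends to $-\infty$ as $|\sigma'|\to\infty$. For this I would check that $\sigma'\mapsto\tilde L_h(x_{i,j},\sigma')$ is coercive on $\RR^5$: it is $+\infty$ off $\mathcal D_{\tilde L_h}$; on $\mathcal D_{\tilde L_h}$, the term $m\ell(x_{i,j},m)\ge\tfrac1{C_1}m^q-C_1 m$ (from \eqref{eq:8}, with $q>1$) forces $\tilde L_h\to+\infty$ as $m\to+\infty$, while the first term in the explicit expression for $\tilde L_h$, of the form $(\beta-1)\beta^{-\beta^*}|(y,z,\tilde y,\tilde z)|^{\beta^*}/m^{(\beta^*-1)(1-\alpha)}$ with $\beta^*>1$ and $(\beta^*-1)(1-\alpha)>0$, forces $\tilde L_h\to+\infty$ when $|(y,z,\tilde y,\tilde z)|\to\infty$ with $m$ bounded. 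The same bound shows $\tilde L_h$ is bounded below, so $\sigma'\mapsto\inf_q U(q,\sigma')$ --- which a short computation identifies with an affine function of $\sigma'$ minus $\tfrac1{2r}|\sigma_{i,j}^n-\sigma'|^2$ minus $\tilde L_h(x_{i,j},\sigma')$ --- is concave, upper semicontinuous, and tends to $-\infty$ at infinity.

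With these ingredients the conclusion follows from the cited minimax theorem. If one prefers an elementary route, one reduces directly to Sion's theorem on compact convex sets: coercivity of $q\mapsto\sup_{\sigma'}U(q,\sigma')$ lets the outer infimum be restricted to a fixed closed ball $\bar B_R$ without changing $\inf_q\sup_{\sigma'}U$; since $-K_h(x_{i,j},\cdot)$ is finite-valued and convex on $\RR^5$ its subdifferential is locally bounded, so the maximizers realizing $\sup_{\sigma'}U(q,\sigma')$ stay in a fixed ball as $q$ ranges over $\bar B_R$, whence the inner supremum may be truncated as well; symmetrically, concavity and the decay to $-\infty$ of $\sigma'\mapsto\inf_q U(q,\sigma')$ let the outer supremum and then the inner infimum be truncated; Sion's theorem on the resulting compact convex boxes, together with the chain of equalities, gives the lemma. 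I expect the main obstacle to be precisely these coercivity verifications --- above all the coercivity of $\tilde L_h$, which genuinely uses the superlinear growth built into H2 and the blow-up of the momentum term as $m\to0^+$ --- together with the bookkeeping that none of the truncations alters the relevant optimal values; everything else parallels the Fenchel--Moreau manipulations already used to obtain \eqref{eq:lastMaxLtilde}.
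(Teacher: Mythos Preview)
Your approach is correct but differs from the paper's. The paper applies Corollary~37.1.3 of \cite{MR1451876} directly: it introduces the lower conjugate
\[
\underline U^*(\lambda,\theta)=\sup_q\inf_{\sigma'}\{q\cdot\lambda+\sigma'\cdot\theta-U(q,\sigma')\}
\]
and checks that $\underline U^*(\lambda,\theta)$ is finite for every $(\lambda,\theta)$. The upper bound uses only $\tilde L_h(x_{i,j},0)=0$ (take $\sigma'=0$ and use the quadratic in $q$); the lower bound uses only that $\tilde L_h(x_{i,j},\cdot)$ is bounded below by $c=\inf_{m\ge 0}\bigl(\tfrac{m^q}{C_1}-C_1 m\bigr)>-\infty$ (take $q=-\theta$). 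That is the entire verification.

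Your route via coercivity and a Sion-type truncation is also valid, and the structural checks you outline are all sound; in particular your use of the local boundedness of $\partial(-K_h)$ to confine the maximizers is correct. Two remarks on economy: first, the full coercivity of $\tilde L_h$ that you establish is not actually needed, since in your own computation $\inf_q U(q,\sigma')$ already contains the term $-\tfrac{1}{2r}|\sigma_{i,j}^n-\sigma'|^2$, which alone drives the expression to $-\infty$ once $\tilde L_h$ is merely bounded below; second, the paper's verification avoids the truncation bookkeeping entirely. What your approach buys is that it relies only on Sion's theorem rather than on the more specialized machinery of conjugate saddle functions in \cite{MR1451876}, which may be preferable for readers less familiar with that framework.
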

\begin{proof}
Let us show that  Corollary~37.1.3 in~\cite{MR1451876} can be applied. For simplicity, we note
\begin{equation*}
	U(q,\sigma') = -\sigma' \cdot q + \sigma_{i,j}^n \cdot q + \frac r 2 \left|\left| \Lambda_h(\phi)_{i,j}^n - q \right|\right|_{2}^2 -  \tilde L_h(x_{i,j},\sigma')
\end{equation*}
 (recall that $\phi_{i,j}^n$ and $\sigma_{i,j}^n  = (m_{i,j}^n ,y_{i,j}^n ,z_{i,j}^n , \tilde y_{i,j}^n , \tilde z_{i,j}^n )$ are fixed in this step). Then for $(\lambda,\theta) \in \R^5 \times \R^5$, $\underline U^*$ is defined as in \cite{MR1451876} by
\begin{align*}
	\underline U^*(\lambda, \theta) &= \sup_q \inf_{\sigma'} \left\{ q \cdot \lambda + \sigma' \cdot \theta - U(q,\sigma') \right\}.
\end{align*}
We first remark that  $\underline U^* (\lambda, \theta)<+\infty$. Indeed,  $\tilde L_h(x,0) = 0$ for all $x$, and  we deduce that for any $q$,
\begin{align*}
	\inf_{\sigma'} \left\{ q \cdot \lambda + \sigma' \cdot \theta - U(q,\sigma') \right\} 
	&= \inf_{\sigma'} \left\{ q \cdot \lambda +  \sigma' \cdot \theta + \sigma' \cdot q - \sigma_{i,j}^n \cdot q - \frac r 2 \left|\left| \Lambda_h(\phi)_{i,j}^n - q \right|\right|_{2}^2 +  \tilde L_h(x_{i,j},\sigma') \right\} \\
	&\leq  q \cdot( \lambda - \sigma_{i,j}^n) - \frac r 2 \left|\left| \Lambda_h(\phi)_{i,j}^n - q \right|\right|_{2}^2,
\end{align*}
and the supremum over $q$ of this last quantity is finite.\\
 Moreover $\underline U^*(\lambda, \theta)>-\infty$. Indeed, recall that from  H2,
 for every $x_{i,j}$, $\tilde L_h(x_{i,j}, \cdot)$ is bounded from below  by  $c= \inf_{m\ge 0} ( \frac {m^q} {C_1}   -C_1 m)>-\infty$. So, for $q = -\theta$,
\begin{align*}
	\inf_{\sigma'} \left\{  q \cdot \lambda + \sigma' \cdot \theta - U(q,\sigma') \right\} 
	&= - \theta \cdot (\lambda -\sigma_{i,j}^n) - \frac r 2 \left|\left| \Lambda_h(\phi)_{i,j}^n + \theta \right|\right|_{2}^2 + 
 \inf_{\sigma'} \left\{ \tilde L_h(x_{i,j},\sigma') \right\}\\
	&\geq - \theta \cdot (\lambda -\sigma_{i,j}^n) - \frac r 2 \left|\left| \Lambda_h(\phi)_{i,j}^n + \theta \right|\right|_{2}^2 + c,
\end{align*}
hence the supremum over $q$  is also bounded from below by this last term, which is finite.
\\
Therefore, we can apply Corollary~37.1.3 of~\cite{MR1451876} and get the conclusion.
\end{proof}
From Lemma~\ref{lem:swapInfMax}, we obtain that the problem \eqref{pb:infmax1} is equivalent to
\begin{align*}
	\max_{\sigma' \in \mathcal D_{\tilde L_h}} \min_{q\in\RR^5}  \left\{ q \cdot (\sigma_{i,j}^n - \sigma')  + \frac r 2 \left\| \Lambda_h(\phi)_{i,j}^n - q \right\|_2^2
			- \tilde L_h(x_{i,j}, \sigma')\right\}.
\end{align*}
Considering the minimization, the first order optimality conditions give, for $\sigma' \in \mathcal D_{\tilde L_h}$:
\begin{align}
 q = \frac 1 r (\sigma' - \sigma_{i,j}^n) + (\Lambda_h \phi)_{i,j}^n. \label{eq:qFctSigmaPrime}
 \end{align}
Using the expression of $q = (a,b,c, \tilde b, \tilde c)$ as a function of $\sigma' = (\mu,\eta,\zeta,\tilde \eta,\tilde \zeta)$, the saddle-point problem takes the form:
\begin{equation}\label{eq:mainMaxPbStep2}
	\max_{\sigma' \in \mathcal D_{\tilde L_h}}  W(\sigma'),
\end{equation}
with $W(\sigma') =  -\frac1{2r}\|\sigma' - \sigma_{i,j}^n\|_2^2 + (\Lambda_h \phi)_{i,j}^n \cdot (\sigma_{i,j}^n - \sigma') - \tilde L_h(x_{i,j}, \sigma')$, which implicitly depends on the point $(i,j,n)$ under consideration.
\\
Assume that the maximum is attained for some $\sigma' \in \mathcal D_{\tilde L_h} \setminus \{0\}$. Then, the first order conditions for the maximization give (noting  $\Lambda_h^{(p)}$ the $p$-th coordinate of $\Lambda_h$, $p \in \{1,\dots,5\}$):
 \begin{align}
   &\ds  -\frac1{r}(\mu - m_{i,j}^n) - (\Lambda_h^{(1)}\phi)_{i,j}^n - \p_\mu \tilde L_h(x_{i,j}, \mu, \eta, \zeta, \tilde \eta, \tilde \zeta)= 0\notag\\
& \Leftrightarrow \quad \ds \mu - m_{i,j}^n +r (\Lambda_h^{(1)}\phi)_{i,j}^n - \frac{r \beta^{-\beta^*}(1-\alpha)} {\mu^{(\beta^*-1)(1-\alpha)+1}}(\eta^2 + \zeta^2 + \tilde \eta^2 + \tilde \zeta^2)^{\beta^*/2}  + r \ell(x_{i,j},\mu) + r \mu \p_\mu \ell(x_{i,j},\mu) = 0, \label{eq:firsteqmu}\\
&	\ds  \min\left(\eta,  \frac1{r}(\eta - y_{i,j}^n)  + (\Lambda_h^{(2)}\phi)_{i,j}^n + \p_\eta \tilde L_h(x_{i,j}, \mu, \eta, \zeta, \tilde \eta, \tilde \zeta)\right) = 0  \notag\\
& \Leftrightarrow \quad \ds
\min\left(\eta,  -y_{i,j}^n + r (\Lambda_h^{(2)}\phi)_{i,j}^n +
\eta  + \frac{r \beta^{1-\beta^*}} {\mu^{(\beta^*-1)(1-\alpha)}}(\eta^2 + \zeta^2 + \tilde \eta^2 +
 \tilde \zeta^2)^{\beta^*/2 - 1} \eta \right)=0,
\label{eq:firsteqeta}\\
& \notag\\
& \ds  \max\left ( \zeta, 
   \frac1{r}(\zeta - z_{i,j}^n)  + (\Lambda_h^{(3)}\phi)_{i,j}^n +\p_\zeta \tilde L_h(x_{i,j}, \mu, \eta, \zeta, \tilde \eta, \tilde \zeta) \right)= 0  \notag\\
&\Leftrightarrow \quad	 \ds     \max\left( \zeta,   -z_{i,j}^n +r (\Lambda_h^{(3)}\phi)_{i,j}^n
+\zeta + \frac{r \beta^{1-\beta^*}} {\mu^{(\beta^*-1)(1-\alpha)}}(\eta^2 + \zeta^2 + \tilde \eta^2 + \tilde \zeta^2)^{\beta^*/2 - 1} \zeta \right)
=0,
\label{eq:firsteqzeta}\\
& \notag\\ \ds   
& \ds  \min\left(\tilde \eta, \frac1{r}(\tilde \eta - \tilde y_{i,j}^n)  + (\Lambda_h^{(4)}\phi)_{i,j}^n+ \p_{\tilde\eta} \tilde L_h(x_{i,j}, \mu, \eta, \zeta, \tilde \eta, \tilde \zeta) \right) =0  \notag\\
&\Leftrightarrow \quad	
	 \ds  \min\left(\tilde \eta,     -\tilde y_{i,j}^n + r (\Lambda_h^{(4)}\phi)_{i,j}^n+ \tilde\eta + \frac{r \beta^{1-\beta^*}} {\mu^{(\beta^*-1)(1-\alpha)}}(\eta^2 + \zeta^2 + \tilde \eta^2 + \tilde \zeta^2)^{\beta^*/2 - 1} \tilde \eta \right)=0 ,
		\label{eq:firsteqetat}\\
& \notag\\
& \ds   \max\left ( \tilde \zeta,  \frac1{r}(\tilde \zeta - \tilde z_{i,j}^n)  + (\Lambda_h^{(5)}\phi)_{i,j}^n + \p_{\tilde\zeta} \tilde L_h(x_{i,j}, \mu, \eta, \zeta, \tilde \eta, \tilde \zeta) \right)= 0  \notag\\
& \Leftrightarrow \quad	 \ds    \max\left ( \tilde \zeta, - \tilde z_{i,j}^n +r (\Lambda_h^{(5)}\phi)_{i,j}^n   +\tilde \zeta + \frac{r \beta^{1-\beta^*}} {\mu^{(\beta^*-1)(1-\alpha)}}(\eta^2 + \zeta^2 + \tilde \eta^2 + \tilde \zeta^2)^{\beta^*/2 - 1} \tilde \zeta \right)=0.
		\label{eq:firsteqzetat}
 \end{align}
We can therefore express $\eta,\zeta,\tilde\eta,\tilde\zeta$ as functions of $\mu$: let us define $\Sigma(\mu) = \eta^2 + \zeta^2 + \tilde \eta^2 + \tilde \zeta^2$ and
$$
 \chi(\mu) = \left( \mu - m_{i,j}^n + r (\Lambda_h^{(1)}\phi)_{i,j}^n + r \ell(x_{i,j},\mu) + r \mu \p_\mu \ell(x_{i,j},\mu) \right)\frac {\mu^{(\beta^*-1)(1-\alpha)+1}} {r \beta^{-\beta^*}(1-\alpha)}.
 $$
Although $\Sigma(\mu)$ and $\chi (\mu)$ depend on $(i,j,n)$,  we drop these indices for simplicity. 
Let us define $P_\chi = \{ \mu >0 \,:\, \chi(\mu) \geq 0\}$.
From \eqref{eq:firsteqmu} we obtain that  for any $\mu \in P_\chi$,
\begin{align}\label{eq:SigmaTmp}
 	 \Sigma(\mu) = \left(\chi(\mu)\right)^{2/\beta^*}.
\end{align}
 Moreover \eqref{eq:firsteqeta}--\eqref{eq:firsteqzetat}  yield
 \begin{eqnarray}
   \label{eq:1}
\ds 	\eta 	&=  \ds \frac{\left(y_{i,j}^n-r (\Lambda_h^{(2)}\phi)_{i,j}^n\right)^+}{1+\frac{r \beta^{1-\beta^*}} {\mu^{(\beta^*-1)(1-\alpha)}}\Sigma(\mu)^{\beta^*/2 - 1}},\quad\quad  	\tilde \eta 
	 &= \ds  \frac{\left( \tilde y_{i,j}^n - r (\Lambda_h^{(4)}\phi)_{i,j}^n \right)^+}{1+\frac{r \beta^{1-\beta^*}} {\mu^{(\beta^*-1)(1-\alpha)}}\Sigma(\mu)^{\beta^*/2 - 1}}, \\
\label{eq:2}
 	\zeta 
	&\ds = -\frac{\left( z_{i,j}^n - r (\Lambda_h^{(3)}\phi)_{i,j}^n\right)^- }{1+\frac{r \beta^{1-\beta^*}} {\mu^{(\beta^*-1)(1-\alpha)}}\Sigma(\mu)^{\beta^*/2 - 1}}, \quad\quad   	\tilde \zeta 
	&\ds = - \frac{\left( \tilde z_{i,j}^n - r (\Lambda_h^{(5)}\phi)_{i,j}^n \right)^-}{1+\frac{r \beta^{1-\beta^*}} {\mu^{(\beta^*-1)(1-\alpha)}}\Sigma(\mu)^{\beta^*/2 - 1}}.
 \end{eqnarray}
 Using (\ref{eq:1})-(\ref{eq:2}) and the definition of $\Sigma$, we find that $\mu \in P_\chi$ satisfies:
 \begin{equation}\label{eq:finalEqMu}
 	\Xi(\mu) = 0,
 \end{equation}
 where
 $$
 	\Xi(\mu) = \Sigma(\mu) \left(1+\frac{r \beta^{1-\beta^*}} {\mu^{(\beta^*-1)(1-\alpha)}}\chi(\mu)^{1 - 2/\beta^*}\right)^2  - \gamma_{i,j}^n,
 $$
 with $\Sigma(\mu)$ given by \eqref{eq:SigmaTmp} and
 $$
 	\gamma_{i,j}^n = \left(y_{i,j}^n - r (\Lambda_h^{(2)}\phi)_{i,j}^n\right)^+ -  \left(z_{i,j}^n - r (\Lambda_h^{(3)}\phi)_{i,j}^n\right)^- +  \left(\tilde y_{i,j}^n - r (\Lambda_h^{(4)}\phi)_{i,j}^n\right)^+ -  \left(\tilde z_{i,j}^n - r (\Lambda_h^{(5)}\phi)_{i,j}^n\right)^-.
 $$
 Equation~\eqref{eq:finalEqMu} involves only the unknown $\mu$. Let us show that it admits at most one solution in $P_\chi$.
 \begin{lemma}\label{lem:step-2-Pxi}
The function $\Xi$ is strictly increasing and  $P_\chi$ is a right-unbounded interval. 
 There exists at most one solution of (\ref{eq:finalEqMu}) in $ P_\chi$.
 \end{lemma}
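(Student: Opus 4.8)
The plan is to reduce all three assertions to one elementary monotonicity fact about the bracket appearing inside $\chi$. First I would set
\[
g(\mu) = \mu - m_{i,j}^n + r(\Lambda_h^{(1)}\phi)_{i,j}^n + r\ell(x_{i,j},\mu) + r\mu\,\p_\mu\ell(x_{i,j},\mu), \qquad \psi(\mu) = \frac{\mu^{(\beta^*-1)(1-\alpha)+1}}{r\beta^{-\beta^*}(1-\alpha)},
\]
so that $\chi(\mu) = g(\mu)\,\psi(\mu)$, where $\psi>0$ and $\psi$ is strictly increasing on $(0,\infty)$ because the exponent $(\beta^*-1)(1-\alpha)+1$ is positive ($\beta^*>1$, $\alpha<1$); in particular $P_\chi = \{\mu>0 : g(\mu)\geq 0\}$. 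Since $m\mapsto m\ell(x_{i,j},m)$ is $\cC^1$ and strictly convex by H2, its derivative $\mu\mapsto \ell(x_{i,j},\mu)+\mu\,\p_\mu\ell(x_{i,j},\mu)$ is strictly increasing; adding the affine strictly increasing term $\mu\mapsto \mu - m_{i,j}^n + r(\Lambda_h^{(1)}\phi)_{i,j}^n$ shows $g$ is continuous and strictly increasing on $(0,\infty)$. By \eqref{eq:8}--\eqref{eq:9} and $q>1$ one gets $g(\mu)\to +\infty$ as $\mu\to+\infty$. Hence $\{g\geq 0\}$ is a non-empty, right-unbounded interval — either $(0,\infty)$ or $[\mu_0,\infty)$ for the unique root $\mu_0$ of $g$ — which is the second assertion.

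Next I would record that on $P_\chi$ the function $\chi = g\psi$ is a product of a non-negative strictly increasing function and a positive strictly increasing function, hence strictly increasing there (for $\mu_1<\mu_2$ in $P_\chi$ one has $\chi(\mu_2)-\chi(\mu_1)\geq \psi(\mu_1)\bigl(g(\mu_2)-g(\mu_1)\bigr)>0$ as soon as $g(\mu_2)>0$, and the endpoint case $g(\mu_1)=0$ is immediate). To treat $\Xi$ I would expand, using $\Sigma(\mu)=\chi(\mu)^{2/\beta^*}$,
\[
\Xi(\mu) + \gamma_{i,j}^n = \chi(\mu)^{2/\beta^*} + 2\,\frac{r\beta^{1-\beta^*}}{\mu^{(\beta^*-1)(1-\alpha)}}\,\chi(\mu) + \frac{r^2\beta^{2-2\beta^*}}{\mu^{2(\beta^*-1)(1-\alpha)}}\,\chi(\mu)^{2-2/\beta^*},
\]
and substitute $\chi=g\psi$. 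Using $\beta^*=\beta/(\beta-1)$ one checks that the negative powers of $\mu$ are exactly absorbed by the powers of $\mu$ coming from $\psi$, so the three summands become
\[
c_1\,g(\mu)^{2/\beta^*}\,\mu^{\,2(1-\alpha)+2\alpha/\beta^*}, \qquad \frac{2\beta}{1-\alpha}\,g(\mu)\,\mu, \qquad c_3\,g(\mu)^{\,2-2/\beta^*}\,\mu^{\,2\alpha/\beta},
\]
with $c_1,c_3>0$ and all exponents of $\mu$ non-negative. Because $1<\beta\leq 2$ forces $\beta^*\geq 2$, the exponents $2/\beta^*$, $1$ and $2-2/\beta^*$ of $g$ are all positive; since $g$ is non-negative and strictly increasing on $P_\chi$, each factor $g(\mu)^s$ is strictly increasing there, and multiplying by the positive non-decreasing factor $\mu^t$ preserves strict monotonicity. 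Hence $\Xi$ is strictly increasing on $P_\chi$.

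The last claim follows at once: a strictly increasing function on an interval vanishes at most once, so \eqref{eq:finalEqMu} has at most one solution in $P_\chi$. The only genuinely delicate step is the exponent bookkeeping after the factorization $\chi=g\psi$ — one must verify that the residual powers of $\mu$ in the three terms, namely $2(1-\alpha)+2\alpha/\beta^*$, $1$ and $2\alpha/\beta$, are all $\geq 0$ so that $\mu\mapsto\mu^t$ cannot spoil monotonicity (this is where $\beta^*=\beta/(\beta-1)$ and $0\leq\alpha<1$ are used); everything else, including continuity of $\Xi$ on $P_\chi$ and the behaviour at a left endpoint where $g(\mu_0)=0$ (there $\chi(\mu_0)=\Sigma(\mu_0)=0$ and $\Xi(\mu_0)=-\gamma_{i,j}^n$), is routine and covered by the same product-of-monotone-functions arguments.
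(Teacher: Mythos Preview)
Your proof is correct and the algebra checks out: with $e=(\beta^*-1)(1-\alpha)$ and $\psi(\mu)=C\mu^{e+1}$, the three terms of $\Xi(\mu)+\gamma_{i,j}^n$ indeed reduce to $c_1\,g^{2/\beta^*}\mu^{2(1-\alpha)+2\alpha/\beta^*}$, $\frac{2\beta}{1-\alpha}\,g\,\mu$, and $c_3\,g^{2-2/\beta^*}\mu^{2\alpha/\beta}$, and all the exponents are non-negative under the standing assumptions $1<\beta\le 2$, $0\le\alpha<1$.

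The route, however, is genuinely different from the paper's. The paper writes $\Xi=\Theta^2-\gamma_{i,j}^n$ with $\Theta(\mu)=\chi(\mu)^{1/\beta^*}+r\beta^{1-\beta^*}\mu^{-e}\chi(\mu)^{1-1/\beta^*}$, differentiates, and then has to verify by a direct computation that the quantity $-(\beta^*-1)(1-\alpha)\mu^{-1}\chi+\bigl(1-\tfrac1{\beta^*}\bigr)\chi'$ is non-negative on $P_\chi$; this is where the paper's proof does real work, and it uses both $N\ge 0$ and $N'>0$. Your argument avoids differentiating $\Xi$ altogether: by substituting $\chi=g\psi$ and tracking exponents you show directly that $\Xi+\gamma_{i,j}^n$ is a sum of terms each of the form (positive power of the strictly increasing non-negative function $g$) $\times$ (non-negative power of $\mu$), which is strictly increasing by an elementary product-of-monotone-functions argument. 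This is cleaner and more robust --- it never needs the precise sign computation the paper carries out --- though the paper's calculus approach has the minor advantage of making the structure $\Xi=\Theta^2-\gamma$ visible, which connects naturally to the bisection step in the algorithm.
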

 \begin{proof}
 For completeness we first study the function $\chi$.\\
 Let us note $N(\mu) = \left( \mu - m_{i,j}^n + r (\Lambda_h^{(1)}\phi)_{i,j}^n + r \ell(x_{i,j},\mu) + r \mu \p_\mu \ell(x_{i,j},\mu) \right)$, 
so  $\chi(\mu) = N(\mu) \frac {\mu^{(\beta^*-1)(1-\alpha)+1}} {r \beta^{-\beta^*}(1-\alpha)}$. 
We see that  $N'>0$ since, from H2, $\mu \mapsto \mu \ell(x_{i,j},\mu)$ is strictly convex. Therefore $\chi$ is a strictly increasing function.
 Moreover, from (\ref{eq:8}) and (\ref{eq:9}), $\chi(\mu) \to +\infty$ as $\mu \to +\infty$. Thus, $P_\chi$ is a right-unbounded interval.
There exists at most one number  $\mu_0\in P_\chi$ such that $\chi(\mu_0)=0$, and if it exists $P_\chi=[\mu_0, +\infty)$.
\\ 
We rewrite $\Xi$ using~\eqref{eq:SigmaTmp}:
 \begin{align*}
	 	\Xi(\mu) &= \chi(\mu)^{2/\beta^*} \left(1+\frac{r \beta^{1-\beta^*}} {\mu^{(\beta^*-1)(1-\alpha)}}\chi(\mu)^{1 - 2/\beta^*}\right)^2  - \gamma_{i,j}^n = \Theta(\mu)^2  - \gamma_{i,j}^n
\end{align*}
with the notation $
	\Theta(\mu) = \chi(\mu)^{1/\beta^*}+\frac{r \beta^{1-\beta^*}} {\mu^{(\beta^*-1)(1-\alpha)}}\chi(\mu)^{1 - 1/\beta^*}$.
Then, the derivative of $\Xi$ on $P_\chi$ is  $\Xi'(\mu) = 2 \Theta(\mu) \Theta'(\mu)$ with
\begin{align*}
	\Theta'(\mu) &= \frac 1 {\beta^*} \chi'(\mu)\chi(\mu)^{\frac{1}{\beta^*}-1} + r \beta^{1-\beta^*} \mu^{-(\beta^*-1)(1-\alpha)} \chi(\mu)^{-\frac{1}{\beta^*}}\left[ -(\beta^*-1)(1-\alpha) \mu^{-1}\chi(\mu) + \left(1-\frac{1}{\beta^*}\right) \chi'(\mu)\right].
\end{align*}
On the set $P_\chi$, $\chi \geq 0$ and $\Theta \geq 0$ with equality only at $\mu_0$ if it exists;
 moreover $\chi'>0$. Hence to obtain that $\Xi'>0$ on $P_\chi\setminus \{\mu_0\}$ , it remains to show that the last term in $\Theta'(\mu)$ is nonnegative.
 For any $\mu \in P_\chi$,
\begin{align*}
	&-(\beta^*-1)(1-\alpha) \mu^{-1}\chi(\mu) + \left(1-\frac{1}{\beta^*}\right) \chi'(\mu) \\
	= \,&\frac{\mu^{(\beta^*-1)(1-\alpha)}} {r \beta^{-\beta^*}(1-\alpha)} \left[ -(\beta^*-1)(1-\alpha) N(\mu) + \left(1-\frac{1}{\beta^*}\right) \Big[N'(\mu)\mu + N(\mu)((\beta^*-1)(1-\alpha)+1) \Big]\right] \\
	= \,&\frac{\mu^{(\beta^*-1)(1-\alpha)}} {r \beta^{-\beta^*}(1-\alpha)} \left[ \frac{\alpha}{\beta} N(\mu) + \left(1-\frac{1}{\beta^*}\right) N'(\mu)\mu\right] 
	> 0.
\end{align*}
Hence $\Xi'(\mu) \geq 0$ with equality only at $\mu_0$ if it exists. We conclude that  there is at most one solution to~\eqref{eq:finalEqMu} in $P_\chi$.
\end{proof}
Our algorithm to solve~\eqref{eq:mainMaxPbStep2} and find the maximizer of $W$ in $\mathcal D_{\tilde L_h}$ is therefore as follows:
 \begin{enumerate}
 	\item Look for a solution $\mu^*$ to~\eqref{eq:finalEqMu}: 
 \begin{enumerate}
 	\item Compute the left end $\underline\mu$  of $P_\chi$.
This is done as follows: first, check whether $\chi(0) \geq 0$. If yes, then set $\underline\mu = 0$. 
Otherwise, look for $\underline \mu=\mu_0$ by a bissection method in $[0, \tilde \mu]$ where $\tilde \mu$ is sufficiently large such that $\chi(\tilde \mu) >0$ 
($\tilde \mu$ exists since $\chi(\mu) \to +\infty$ as $\mu \to +\infty$). 
	\item Check whether $\Xi\left(\underline\mu\right) > 0$. If yes, then there is no solution to~\eqref{eq:finalEqMu} and the maximizer of~\eqref{eq:mainMaxPbStep2} is $\sigma' = (0,0,0,0,0)$. In this case, stop here.
	\item Otherwise, continue and compute $\mu^*$ solving~\eqref{eq:finalEqMu}.
 To do so, we use a bissection method in $[\underline \mu, \overline \mu]$, where $\overline \mu$ is large enough such that $\Xi\left(\overline\mu\right) >0$
 (this is possible because $\Xi\to +\infty$ as $\mu \to +\infty$).
 \end{enumerate} 
	\item Given $\mu^*$, compute $\eta^*, \zeta^*, \tilde\eta^*, \tilde\zeta^*$ given by~(\ref{eq:1})-(\ref{eq:2})
	\item The maximizer of~\eqref{eq:mainMaxPbStep2} is either $(\mu^*, \eta^*, \zeta^*, \tilde\eta^*, \tilde\zeta^*)$ or $(0,0,0,0,0)$. Take the one giving the largest value for $W$ (the explicit value for $W(0,0,0,0,0)$ is $-\frac1{2r}(\sigma_{i,j}^n)^2 + (\Lambda_h \phi)_{i,j}^n \cdot \sigma_{i,j}^n$ by definition of $W$).
\end{enumerate}

Finally we can deduce $q=(a,b,c, \tilde b, \tilde c)$ using \eqref{eq:qFctSigmaPrime}.

\paragraph{Step 3 : update of $\sigma = (m,y,z, \tilde y, \tilde z)$ :}
The last step is simply~:
\begin{equation*}
	\sigma^{k+1} = \sigma^k - r \Big(\Lambda_h(\phi^{k+1}) - q^{k+1} \Big).
\end{equation*}
It consists of a loop on all the nodes of the time-space grid.

\subsection{Convergence criteria}\label{subsec:convergenceCrit}
At convergence of the ADMM, the  grid functions $(u^{n})_{i,j}$ and $(m^{n})_{i,j}$ satisfy some discrete versions of (\ref{eq:HJB-mftc-cong})-(\ref{eq:FP-mftc-cong}) that will be written below (the discrete Bellman equation holds at the nodes where  $m$ is positive). The latter discrete equations are obtained by writing the optimality conditions for (\ref{eq:pbA2}) and (\ref{eq:pbAdual1}).
They are reminiscent of the finite difference schemes used in~\cite{YAJML} for  MFG problems with congestion.
  Recall that since we study a mean field type control problem, \eqref{eq:HJB-mftc-cong}  differs from the Bellman  equation of the MFG system: it has an additional term involving the derivative of $H$ w.r.t $m$.
\\
 To study numerically the convergence of the ADMM, we may use the $\ell^2$ norm of the residuals of the above mentionned discrete equations.

\paragraph{Discrete HJB equation.} %
When $\nu = 0$, the discrete version of the HJB equation~\eqref{eq:HJB-mftc-cong} 
is obtained by applying the following semi-implicit Euler scheme: for  $n \in \{0, \dots, N_T-1\}, i,j \in \{0, \dots, N_h-1\}$,
\begin{align}\label{eq:discreteHJB}
\ds \frac{u_{i,j}^{n+1} - u_{i,j}^{n}}{\Delta t}
 + H_h(x_{i,j}, m^{n+1}_{i,j}, [\grad_h u^{n}]_{i,j} )
 + m^{n+1}_{i,j} \frac{\partial H_h}{\partial m}(x_{i,j}, m^{n+1}_{i,j}, [\grad_h u^{n}]_{i,j} )&= 0.
 \end{align}
At convergence of the ADMM, this equation holds at all the grid nodes where $m^{n+1}_{i,j}>0$. 
\paragraph{\bf Discrete transport operator.}
In order to approximate equation \eqref{eq:FP-mftc-cong}, we 
multiply the nonlinear term in  \eqref{eq:FP-mftc-cong} by a test -function $w$
and integrate over $\Omega$, as one would do when writing the weak formulation of  \eqref{eq:FP-mftc-cong}: this yields the integral
 $\int_{\Omega} \diver\left(m \frac {\partial H}{\partial p}(\cdot,m,\nabla u) \right)(x)\; w(x)\, dx$, in which $m$ appears twice.
By integration by parts (using the periodic boundary condition),
\[\ds I=- \int_{\Omega} m(x) \frac {\partial H}{\partial p}(x, m(x),\nabla u(x)) \cdot \nabla w(x),\ \]
which  will be approximated by
\[ - h^2\sum_{i,j}    m_{i,j} \nabla_q H_h (x_{i,j}, m_{i,j},[\grad_h u]_{i,j}) \cdot [\grad_h w]_{i,j}.\]
We define the  transport  operator $\cT$ by
\[h^2 \sum_{i,j} \cT_{i,j}(u,m,\widetilde m) w_{i,j} = - h^2\sum_{i,j}    m_{i,j} \nabla_q H_h (x_{i,j},\widetilde m_{i,j},[\grad_h u]_{i,j}) \cdot [\grad_h w]_{i,j},\]
where we have doubled the $m$ variable, to keep the notations used in \cite{YAJML}.  This identity completely characterizes $ \cT_{i,j}(u,m,\widetilde m)$: for any point $x_{i,j}$,
\begin{equation}
  \begin{split}
  &  \cT_{i,j}(u,m,\widetilde m)= \\   & \frac 1 h\left(
  \begin{array}[c]{l}
\ds
\left(
  \begin{array}[c]{l}
   \ds  +m_{i,j}  \frac {\partial H_h} {\partial q_1} (x_{i,j}, \widetilde m_{i,j}, [\grad_h u]_{i,j})
     - m_{i-1,j}  \frac {\partial H_h} {\partial q_1}(x_{i-1,j}, \widetilde m_{i-1,j}, [\grad_h u]_{i-1,j}) \\ \ds
     +m_{i+1,j}  \frac {\partial H_h} {\partial q_2} (x_{i+1,j}, \widetilde m_{i+1,j}, [\grad_h u]_{i+1,j})
	- \ds  m_{i,j}  \frac {\partial H_h} {\partial q_2} (x_{i,j}, \widetilde m_{i,j}, [\grad_h u]_{i,j})
  \end{array}
\right)
\\ +\\
\ds 
\left(
  \begin{array}[c]{l}
\ds + m_{i,j}  \frac {\partial H_h} {\partial q_3} (x_{i,j},\widetilde m_{i,j}, [\grad_h u]_{i,j})
	- \ds  m_{i,j-1}\frac {\partial H_h} {\partial q_3} (x_{i,j-1},\widetilde m_{i,j-1}, [\grad_h u]_{i,j-1}) \\
	+ \ds  m_{i,j+1}\frac {\partial H_h} {\partial q_4} (x_{i,j+1}, \widetilde m_{i,j+1},[\grad_h u]_{i,j+1})
	- m_{i,j}  \frac {\partial H_h} {\partial q_4} (x_{i,j},\widetilde m_{i,j}, [\grad_h u]_{i,j})
  \end{array}
\right)
  \end{array}
\right).
  \end{split}
\end{equation}

\paragraph{Discrete Kolmogorov equation.}
With the notations introduced above, the following discrete Kolmogorov equation corresponding to~\eqref{eq:FP-mftc-cong}:
\begin{equation}
\label{eq:discreteFP}
\ds   \frac {m^{n+1}_{i,j}- m^{n}_{i,j}} {\dt}
+ \cT_{i,j}(u^{n},m^{n+1},m^{n+1} )= 0,
\end{equation}
arises in the optimality condition for (\ref{eq:pbA2}) and (\ref{eq:pbAdual1}).

\paragraph{Criteria of convergence.} 
To study numerically the convergence of the ADMM, we take the approximate solution $(\phi^k, q^k, \sigma^k)$ obtained at the $k$-th iteration
and compute the $\ell^2$ norm (and the $m$-weighted  $\ell^2$ norm) of the residual $w$ defined as:
\begin{equation*}
w_{i,j}^n = \ds (D_t (\phi^k)_{i,j})^{n+1} 
 + H_h(x_{i,j}, (m^k)^{n+1}_{i,j}, [\grad_h (\phi^k)^{n}]_{i,j} )
 + (m^k)^{n+1}_{i,j} \frac{\partial H_h}{\partial m}(x_{i,j}, (m^k)^{n+1}_{i,j}, [\grad_h (\phi^k)^{n}]_{i,j} ),
\end{equation*}
if  $(m^k)^{n+1}_{i,j}\not =0$, $w_{i,j}^n= 0$ otherwise.
In our implementation, we do not compute  the residuals of the discrete Kolmogorov equation, although this would also give a good criteria of convergence.\\
Another criteria that we use is the error between $\Lambda(\phi^k)$ and $q^k$:
\begin{equation*}
	||\Lambda(\phi^k) - q^k||_{\ell^2(\R^{5N'})}.
\end{equation*}
Since $\left(\Lambda(\phi^k)\right)_k$ and $(q^k)_k$ are converging to the same limit, this term tends to $0$.
Finally, we  can also check that $||\phi^{k+1} - \phi^k||_{\ell^2(\R^n)}$ and $||m^{k+1} - m^k||_{\ell^2(\R^n)}$ tend to $0$.

\section{State constraints}\label{sec:state-constraints}

The goal of this section is to extend the previous algorithm to mean field type control with state constraints, hence with different 
boundary conditions which are relevant  to model  walls  or obstacles  and   more realistic from the point of view of applications. In general the  problems will not be periodic any longer. 

\begin{remark}
For brevity, we restrict ourselves to a one dimensional interval, but the same approach can be applied 
to two-dimensional domains. In section~\ref{sec:numerics}, we will show bidimensional numerical simulations.
\end{remark}

\paragraph{Model and assumptions.}
We consider the interval $\Omega = (0,1) \subset \R$. At a point $x\in \partial \Omega$,  we note by $n(x)$ the outward normal vector (actually a scalar in dimension one).
 Let us take $0\leq\alpha<1$ and $1<\beta\leq 2$ with conjugate exponent $\beta^* = \beta/(\beta-1)$. 
As in the previous sections, $\ell$ is a continuous cost function with the same assumptions as in~\S~\ref{subsec:model}. 
We also fix $u_T \in \cC^2(\Omega)$ and $m_0 \in \cC^1(\Omega)$ such that $m_0 \geq 0$ and $\int_\Omega m_0(x) dx=1$.
 We also assume that $\nu = 0$; in other words, we focus on deterministic mean field type control problem. 
\\
We consider on $\overline \Omega$ the same Lagrangian as in~\S~\ref{subsec:model}~:
$
	 L(x,m,\xi) = (\beta - 1)\beta^{-\beta^*} m^{\frac \alpha{\beta -1}}|\xi|^{\beta^*} + \ell(x,m).
$
The Hamiltonian takes a different form on the boundary than inside the domain:
\begin{align*}
	 H(x,m,p) &=
	 \begin{cases}
	 	\inf_{\xi \in \RR} \big\{\xi \cdot p + L(x,m,\xi)\big\} = -m^{-\alpha} |p|^{\beta} + \ell(x,m) &\mbox{ if } x \in \Omega, \\
		\inf_{\xi \in \RR \,:\, \xi \cdot n \leq 0} \big\{\xi \cdot p + L(x,m,\xi)\big\} &\mbox{ if } x \in \partial \Omega = \{0,1\}.
	\end{cases}
\end{align*}
Note that on the boundary, the infimum is taken over dynamics staying in  $\overline \Omega$.

\subsection{Numerical Scheme}
\paragraph{Discretization.}
Let ${\overline\Omega}_h$ be a uniform grid on $\overline \Omega = [0,1]$ with mesh step $h$ such that $1/h$ is an integer $N_h$. Let $\Omega_h = {\overline\Omega}_h \backslash \{0,1\}$. Note by $x_{i}$ the point in ${\overline\Omega}_h$ of coordinate $i h$, $i \in \{0, \dots, N_h\}$. Let $T>0$, $N_T$ be a positive integer, $\Delta t = T/N_T$, and $t_n = n\Delta t$, for $n \in \{0,\dots, N_T\}$.  Moreover, we note $N = (N_T+1) (N_h+1)$ the total number of points in the space-time grid and $M = (N_T+1) (N_h-1)$ the number of points inside the space domain, and $N'=N_T(N_h+1)$. 
As above, the discrete data are noted ${\widetilde m}^0$ and ${\widetilde u}^{N_T}$.
We define the discrete Hamiltonian:
\begin{equation*}
	H_h(x,m,p_1,p_2) = - m^{-\alpha} \left( (p_1^-)^2 + (p_2^+)^2 \right)^{\beta/2} + \ell(x,m)
\end{equation*}
for any $x \in \Omega_h$, $m \geq 0$, $p_1 \in \RR$ and $p_2 \in \RR$.  On the boundary, we define~:
\begin{equation*}
	H_{h,0} (m,p_1) = - m^{-\alpha} |p_1^-|^{\beta} + \ell(0,m), \quad \hbox{ and } \quad H_{h,1} (m, p_2) = - m^{-\alpha} |p_2^+|^{\beta} + \ell(1,m).
\end{equation*}
\begin{remark}\label{sec:discretization}
	In dimension two, if the domain is $(0,1)^2$ for example, the Hamiltonian would take a special form on each segment of the boundary and at each corner.
\end{remark}

\paragraph{Assumption} 
As above, we introduce the discrete first order right sided finite difference operator: for any $\phi \in \RR^{N_h+1}$,
$
	(D^+ \phi)_{i} = \frac{1}{h}(\phi_{i+1} - \phi_{i}),$ for all $i\in\{0,\dots,N_h-1\}.
$
We let $[\grad_h \phi ]_{i}$ be the collection of the two possible one sided finite differences at $x_{i} \in \Omega_h, i \in\{1,\dots,N_h-1\}$:
$
	[\grad_h \phi ]_{i} = \Big( (D_1^+ \phi)_{i}, (D_1^+ \phi)_{i-1} \Big) \in \RR^2,
$
and we let $\Delta_h$ be the discrete Laplacian:
$
	(\Delta_h \phi)_{i} = - \frac{1}{h^2} \big( 2 \phi_{i} - \phi_{i+1} - \phi_{i-1}\big),$ for all $i\in\{1,\dots,N_h-1\}.
$
For any $(\phi^n)_{n \in\{0,\dots, N_T\}} \in \R^{N_T+1}$, we note the discrete first order finite difference operator in time:
$
	(D_t \phi)^n = \frac{1}{\Delta t}(\phi^n - \phi^{n-1}),$ for all $n\in\{1,\dots,N_T\}.
$

\paragraph{Discrete  problem $\mathcal A_h$.}
We consider the discrete optimization problem:
\begin{align}
	\mathcal A_h = 
	\sup_{\phi\in\R^N} \min_{m \in (\R_+)^N} \Big\{ &h \Delta t \sum_{n=1}^{N_T} \sum_{i=0}^{N_h}  m_{i}^n (D_t \phi_{i})^n
	+ h \Delta t \sum_{n=1}^{N_T} \sum_{i=1}^{N_h-1}  m_{i}^nH_h\left(x_{i}, m_{i}^n, (D^+ \phi^{n-1})_{i} , (D^+ \phi^{n-1})_{i-1}  \right) \notag \\
	&+ \Delta t \sum_{n=1}^{N_T} m_{0}^nH_{h,0} \left(m_{0}^n , (D^+ \phi^{n-1})_{0} \right)+ \Delta t \sum_{n=1}^{N_T} m_{N_h}^nH_{h,1} \left(m_{N_h}^n , (D^+ \phi^{n-1})_{N_h-1} \right) \notag \\
	& -\chi_T(\phi) + h \sum_{i=0}^{N_h} {\widetilde m}_{i}^0 \phi_{i}^0 \Big\} \label{eq:pbA1-ST}
\end{align}
where, as before, $\chi_T(\phi) = 0$ if $\phi_{i}^{N_T}  = {\widetilde u}_{i}^{N_T}$ for all $i \in \{0,\dots,N_h\}$, and $\chi_T(\phi) = +\infty$ otherwise.

We can formulate $\mathcal A_h$ as a convex minimization problem:
\begin{align}
	\mathcal A_h 
	&= - \inf_{\phi \in \R^N} \big\{ \mathcal F_h(\phi) +  \mathcal G_h (\Lambda_h (\phi)) \big\},  \label{eq:pbA2-ST}
\end{align}
where $\Lambda_h : \R^N \to \R^{3N'}$ is defined by: $\forall \, n\in\{1,\dots,N_T\}$,
\begin{eqnarray*}
  (\Lambda_h(\phi))_{i}^n &=& 
		\left( (D_t \phi_{i})^n , [\grad_h \phi^{n-1}]_{i}  \right) \qquad \qquad\forall \, i\in\{1,\dots,N_h-1\}, \\
	(\Lambda_h(\phi))_{0}^n &=& 
		\left( (D_t \phi_{0})^n , (D_1^+ \phi^{n-1})_{0}, 0\right), \\
	(\Lambda_h(\phi))_{N_h}^n &=&
		\left( (D_t \phi_{N_h})^n , 0 , (D_1^+ \phi^{n-1})_{N_h-1} \right),
\end{eqnarray*}
(for more homogeneity in the notations, we have added a dummy $0$ in $(\Lambda_h(\phi))_{0}^n$ and $(\Lambda_h(\phi))_{N_h}^n$),
and $\mathcal F_h : \R^N \to \R \cup \{+\infty\}$ and $\mathcal G_h : \R^{3N} \to \R \cup \{+\infty\}$ are the two  proper functions defined by
\begin{align*}
 \mathcal F_h(\phi) = \, &\chi_T(\phi) - h \sum_{i=0}^{N_h} {\widetilde m}_{i}^0 \phi_{i}^0, \\
	\hbox{and } \quad 	\mathcal G_h(a,b,c) = &- \min_{m \in (\R_+)^N} \Big\{ h \Delta t \sum_{n=1}^{N_T} \sum_{i=0}^{N_h} m_{i}^n a_{i}^{n}
	+  h \Delta t \sum_{n=1}^{N_T} \sum_{i=1}^{N_h-1} m_{i}^n H_h(x_{i}, m_{i}^n, b_{i}^{n}, c_{i}^{n}) \\
	&\qquad\qquad +  \Delta t \sum_{n=1}^{N_T} m_{0}^n H_{h,0}(m_{0}^n, b_{0}^{n}) + \Delta t \sum_{n=1}^{N_T} m_{N_h}^n H_{h,1}(m_{N_h}^n, c_{N_h}^{n})
	 \Big\}\\
	= &-  h \Delta t \sum_{n=1}^{N_T} \sum_{i=1}^{N_h-1} K_h(x_{i}, a_{i}^n, b_{i}^{n}, c_{i}^{n}) -  \Delta t \sum_{n=1}^{N_T} K_{h,0}(a_{0}^n, b_{0}^{n}) - \Delta t \sum_{n=1}^{N_T} K_{h,1}(a_{N_h}^{n}, c_{N_h}^{n}),
      \end{align*}
with   $K_h(x, a, p_1, p_2) = \min_{m \in \R_+} \big\{ m(a + H_h(x, m, p_1, p_2) )\big\}$, $K_{h,0}(a, p_1) = \min_{m \in \R_+} \big\{ m(a + H_{h,0}(m, p_1) )\big\}$,
				and $ K_{h,1}(a, p_2) = \min_{m \in \R_+} \big\{ m(a + H_{h,1}(m, p_2) )\big\}$.
Note that $K_h,K_{h,0}$ and $K_{h,1}$ are nonpositive and concave in $(a, p_1, p_2), (a, p_1)$  and $(a, p_2)$. Hence $\mathcal G_h$ is indeed convex.

\paragraph{Dual version of problem $\mathcal A_h$.}\label{sec:discreteDuality-ST}
From \eqref{eq:pbA2-ST}, by Fenchel-Rockafellar theorem (see e.g. \cite{MR1451876}, Corollary 31.2.1), we deduce that the dual problem of $\mathcal A_h$ is:
\begin{equation}\label{eq:pbAdual1-ST}
	\min_{\sigma \in \R^{3N'}} \big\{\mathcal F_h^*(\Lambda_h^*(\sigma)) + \mathcal G_h^*(-\sigma)\big\},
\end{equation}
where $\mathcal G_h^*$ and $\mathcal F_h^*$ are the Legendre-Fenchel conjugates of $\mathcal G_h$ and $\mathcal F_h$ respectively, defined by
$
\mathcal F_h^*(\mu) = \sup_{\phi \in \R^N} \Big\{ \langle \mu , \phi \rangle_{\ell^2(\R^{N})}  - \mathcal F_h(\phi) \Big\},  
$
and 
\begin{align}
	\mathcal G_h^*(-\sigma)
	= &\max_{q \in \R^{3N'}} \Big\{ - \langle \sigma , q \rangle_{\ell^2(\R^{3N'})} - \mathcal G_h(q)  \Big\} \notag \\
	= &\max_{q} \Big\{ h \Delta t  \sum_{n=1}^{N_T} \sum_{i=1}^{N_h-1}\Big[ - \sigma_{i}^n \cdot q_{i}^n + K_h(x_{i},  q_{i}^{n})\Big]  + \Delta t  \sum_{n=1}^{N_T} \Big[ - m_{0}^n a_{0}^n  - y_{0}^n c_{0}^n  + K_{h,0}( a_{0}^{n},  b_{0}^{n})\Big] \notag \\
	 & \qquad + \Delta t  \sum_{n=1}^{N_T} \Big[ - m_{N_h}^n a_{N_h}^n  - z_{N_h}^n c_{N_h}^n  + K_{h,1}( a_{N_h}^{n},  c_{N_h}^{n})\Big] \Big\}\notag  \\
	= & h \Delta t  \sum_{n=1}^{N_T} \sum_{i=1}^{N_h-1} \tilde L_h(x_{i}, \sigma_{i}^n)
	 	 + \Delta t  \sum_{n=1}^{N_T} \tilde L_{h,0}(m_{0}^n, y_{0}^n)
		 + \Delta t  \sum_{n=1}^{N_T} \tilde L_{h,1}(m_{N_h}^n, z_{N_h}^n), \label{eq:GstarLbar-ST} 
               \end{align}
with $\tilde L_h(x,\sigma_0) = \max_{q_0 \in \R^{3}} \big\{ -\sigma_0 \cdot q_0 + K_h(x,q_0) \big\}$,  $\forall \sigma_0 \in \R^3$,
$\tilde L_{h,0}(m, y) = \max_{(a,b) \in \R^2} \big\{ -(a,b) \cdot (m, y) + K_{h,0}(a,b) \big\}$, $\forall y \in \R$, and
$\tilde L_{h,1}(m, z) = \max_{(a,c) \in \R^2} \big\{ -(a,c) \cdot (m, z) + K_{h,1}(a,c) \big\}$, $\forall z \in \R$.
\\
Finally, $\Lambda_h^*: \R^{3N'} \to \R^N$ is the adjoint of $\Lambda_h$, defined by
\begin{align*}
	&\left\langle \Lambda_h^*(m,y,z), \phi \right\rangle_{\ell^2(\R^{N})}
	= \left\langle (m,y,z), \Lambda_h(\phi) \right\rangle_{\ell^2(\R^{3N'})}
	\\
	&= h \Delta t  \sum_{n=1}^{N_T} \left[  \sum_{i=0}^{N_h} m_{i}^n \frac{\phi_{i}^n - \phi_{i}^{n-1}}{\Delta t} 
	+  \sum_{i=0}^{N_h-1} (y_{i}^n + z_{i+1}^n)\frac{\phi_{i+1}^{n-1}- \phi_{i}^{n-1}}{h} \right]
		\\
	&= h \Delta t  \sum_{n=0}^{N_T-1}  \left[ - \sum_{i=0}^{N_h} \frac{m_{i}^{n+1} - m_{i}^{n}}{\Delta t} \phi_{i}^n  \right]  + h \sum_{i=0}^{N_h}  m_{i}^{N_T} \phi_{i}^{N_T} - h \sum_{i=0}^{N_h}  m_{i}^{0} \phi_{i}^{0}
	\\
	&\qquad + h \Delta t  \sum_{n=0}^{N_T-1}  \left[ \sum_{i=1}^{N_h-1} \left(-\frac{y_{i}^{n+1}- y_{i-1}^{n+1}}{h} - \frac{z_{i+1}^{n+1}- z_{i}^{n+1}}{h} \right)\phi_{i}^{n}  + (y_{N_h-1}^{n+1} + z_{N_h}^{n+1}) \frac {\phi_{N_h}^{n-1}}{h} - (y_{0}^{n+1}+z_{1}^{n+1}) \frac {\phi_{0}^{n}}{h} \right].
\end{align*}
Hence
\begin{align*}
	\mathcal F_h^*(\Lambda_h^*(m,y,z)) = 
	\begin{cases}
		h \sum_{i=0}^{N_h}  m_{i}^{N_T} {\widetilde u}_{i}^{N_T} &\hbox{ if $(m,y,z)$ satisfies \eqref{eq:FPdiscrete-ST} (see below)} \\
		+\infty &\hbox{otherwise}
	\end{cases}
\end{align*}
with : $\forall \, i\in\{0,\dots,N_h\}$, $m_{i}^0  = {\widetilde m}^0_{i}$, and :
\begin{equation}\label{eq:FPdiscrete-ST}
	\begin{dcases}
		\frac{m_{i}^{n+1} - m_{i}^{n}}{\Delta t} + \frac{y_{i}^{n+1} - y_{i-1}^{n+1}}{h} + \frac{z_{i+1}^{n+1} - z_{i}^{n+1}}{h} = 0, \quad \forall \, n\in\{0,\dots,N_T-1\}, \forall \, i\in\{1,\dots,N_h-1\} \\
		\frac{m_{0}^{n+1} - m_{0}^{n}}{\Delta t} + \frac{y_{0}^{n+1}}{h} + \frac{z_{1}^{n+1}}{h} = 0, 
\quad
		\frac{m_{N_h}^{n+1} - m_{N_h}^{n}}{\Delta t} - \frac{y_{N_h-1}^{n+1}}{h} - \frac{z_{N_h}^{n+1}}{h} = 0, \quad \forall \, n\in\{0,\dots,N_T-1\}.
	\end{dcases}
\end{equation}

So the dual of problem $\mathcal A_h$ rewrites~:
\begin{align}
	\min_{(m,y,z) \in \R^{3N'}} &\left\{ h \Delta t  \sum_{n=1}^{N_T} \sum_{i=1}^{N_h-1} \tilde L_h(x_{i}, m_{i}^n, y_{i}^n, z_{i}^n) 
	+ \Delta t  \sum_{n=1}^{N_T} \tilde L_{h,0}(m_{0}^n, y_{0}^n)
		 + \Delta t  \sum_{n=1}^{N_T} \tilde L_{h,1}(m_{N_h}^n, z_{N_h}^n) + h \sum_{i=0}^{N_h}  m_{i}^{N_T} {\widetilde u}_{i}^{N_T} \right\} \label{eq:pbAdual2-ST} \\
	&\hbox{ subject to \eqref{eq:FPdiscrete-ST}.}\notag
\end{align}
Let us now compute an equivalent expression for $\tilde L_h$. We note $\mathcal D_{\tilde L_h}$ the domain where $\tilde L_h$ is finite, that is:
\begin{equation*}
	\mathcal D_{\tilde L_h} = \{(0,0,0)\} \cup \{(m,y,z) \in\R^3 ~:~ m>0, y \geq 0, z \leq 0\}.
\end{equation*}
and similarly for $\tilde L_{h,0}$ and $\tilde L_{h,1}$:
\begin{align*}
	&\mathcal D_{\tilde L_{h,0}} = \{(0,0)\} \cup \{(m,y) \in\R^2 ~:~ m>0, y \geq 0\}, 
	\qquad \mathcal D_{\tilde L_{h,1}} = \{(0,0)\} \cup \{(m,z) \in\R^2 ~:~ m>0, z \leq 0\}.
\end{align*}

As in the periodic case (see~\eqref {eq:lastMaxLtilde})), Fenchel-Moreau Theorem yields
\begin{align*}
	\forall (m,y,z) \in \mathcal D_{\tilde L_h}, \qquad
	\tilde L_h(x,m,y,z)
	= & \max_{(b,c) \in \R^2} \big\{ -(y, z)  \cdot (b,c)  + mH_h(x,m,b,c) \big\}, \\
	\forall (m,y) \in \mathcal D_{\tilde L_{h,0}}, \qquad 
	\tilde L_{h,0}(m,y)
	= & \max_{b \in \R} \big\{ -yb + mH_{h,0}(m,b) \big\}, \\
	\forall (m,z) \in \mathcal D_{\tilde L_{h,1}}, \qquad 
	\tilde L_{h,1}(m,z)
	= & \max_{c \in \R} \big\{ -zc + mH_{h,1}(m,c) \big\}.
\end{align*}
So we can also express $\tilde L_h$ as follows: $\forall x \in \Omega_h$,
\begin{equation*}
	\tilde L_h(x,m,y,z)
	=
	\begin{cases}
		(\beta-1)\beta^{-\beta^*} \frac{\left( y^2 + z^2 \right)^{\beta^*/2}}{m^{(\beta^*-1)(1-\alpha)}} + m \ell(x,m) &\hbox{ if $m>0, y \geq 0, z \leq 0$} \\
		0 &\hbox{ if $m=y=z=0$} \\
		+\infty &\hbox{ otherwise.}
	\end{cases}
\end{equation*}
At the boundaries:
\begin{align*}
	&\tilde L_{h,0}(m,y)
	=
	\begin{cases}
		(\beta-1)\beta^{-\beta^*} \frac{|y|^{\beta^*}}{m^{(\beta^*-1)(1-\alpha)}} + m \ell(0,m) &\hbox{ if $m>0, y \geq 0$} \\
		0 &\hbox{ if $m=y=0$} \\
		+\infty &\hbox{ otherwise,}
	\end{cases}
	\\
	&\tilde L_{h,1}(m,z)
	=
	\begin{cases}
		(\beta-1)\beta^{-\beta^*} \frac{|z|^{\beta^*}}{m^{(\beta^*-1)(1-\alpha)}} + m \ell(1,m) &\hbox{ if $m>0, z \leq 0$} \\
		0 &\hbox{ if $m=z=0$} \\
		+\infty &\hbox{ otherwise.}
	\end{cases}
\end{align*}

\subsection{Augmented Lagrangian}
Let us go back to the primal formulation of $\mathcal A_h$. We decouple  $\mathcal F_h$ and $\mathcal G_h$ by introducing a different set of arguments for $\mathcal G_h$
 then  add the constraint that the latter arguments coincide with $\Lambda_h(\phi)$. The problem becomes:
\begin{align*}
	\mathcal A_h 
	= - &\inf_{\phi \in \R^N} \inf_{q \in \R^{3N'}}\big\{ \mathcal F_h(\phi) + \mathcal G_h (q) \big\} 
	\hbox{ subject to $q = \Lambda_h(\phi)$.}
\end{align*}
The Lagrangian corresponding to this constrained optimization problem is
\begin{equation}\label{eq:discreteL-ST}
	\mathcal L_h (\phi, q, \sigma) = \mathcal F_h(\phi) + \mathcal G_h(q) - \langle \sigma , (\Lambda_h(\phi) - q) \rangle_{\ell^2(\R^{3N'})}
\end{equation}
and the augmented Lagrangian is defined for $r>0$ as
\begin{align*}
	\mathcal L_h^r (\phi, q, \sigma) = \mathcal L_h (\phi, q, \sigma) + \frac r 2 \left|\left| \Lambda_h(\phi) - q \right|\right|_{\ell^2(\R^{3N'})}^2.
\end{align*}

\subsection{Alternating Direction Method of Multipliers for $\mathcal L_h^r$}
Since the ADMM is similar to the periodic setting,  we only put the stress on  the main differences.
We note $q = (a,b,c)\in\RR^{3N'}$ and $\sigma = (m,y,z)\in\RR^{3N'}$. Starting from an initial candidate solution $(\phi^0, q^0, \sigma^0)$, we find for $k\geq 0$
\begin{align}
	&\phi^{k+1} \in \argmin_{\phi \in \R^N} \Big\{
		\mathcal F_h(\phi) - \langle \sigma^k , \Lambda_h(\phi) \rangle_{\ell^2(\R^{3N'})}  + \frac r 2 \left|\left| \Lambda_h(\phi) - q^k \right|\right|_{\ell^2(\R^{3N'})}^2
	 \Big\}, \label{eq:update-phi-ST}\\
	&q^{k+1} \in \argmin_{q \in \R^{3N'}} \Big\{
		 \mathcal G_h(q) + \langle \sigma^k , q \rangle_{\ell^2(\R^{3N'})} + \frac r 2 \left|\left| \Lambda_h(\phi^{k+1}) - q \right|\right|_{\ell^2(\R^{3N'})}^2
	 \Big\}, \label{eq:update-q-ST}\\
	 &\sigma^{k+1} = \sigma^k - r \Big(\Lambda_h(\phi^{k+1}) - q^{k+1} \Big). \label{eq:update-sigma-ST}
\end{align}
Below, we give  details on the  algorithm. 

\paragraph{Step 1 : update of $\phi$ :}
To shorten the notations, let us drop the superscript $k$ and note in this step $q = (a,b,c) = (a^k,b^k,c^k) = q^k$ and $\sigma = (m,y,z) = (m^k,y^k,z^k) = \sigma^k$.
We are looking for the $\phi \in \R^N$ that satisfies:
\begin{enumerate}
	\item $\forall \, i = 0, \dots, N_h, \quad \phi_{i}^{N_T} = \tilde u_{i}^{T}$ (otherwise $\mathcal F_h(\phi) = +\infty$),
	\item and, for any $\psi \in \R^N$,
	$0 = D \mathcal F_h(\phi) \psi - \sigma \cdot D\Lambda_h(\phi)\psi + r \left(\Lambda_h(\phi) - q\right) \cdot  D\Lambda_h(\phi)\psi.$
\end{enumerate}
 If $\phi$ satisfies the first condition then the second condition can be written as follows:
\begin{align*}
	0
	=
	& -h \sum_{i=0}^{N_h} m_{i}^0 \psi_{i}^0
	+ h \Delta t \sum_{i=0}^{N_h} \left(r \frac{{\widetilde u}_{i}^{T} - \phi_{i}^{N_T-1}}{\Delta t} - ra_{i}^{N_T}  - m_{i}^{N_T}\right) \frac{ - \psi_{i}^{N_T-1}}{\Delta t} \\
	&+ h \Delta t \sum_{n=1}^{N_T-1} \sum_{i=0}^{N_h} \left(r \frac{\phi_{i}^{n} - \phi_{i}^{n-1}}{\Delta t} - ra_{i}^{n}  - m_{i}^{n}\right) \frac{\psi_{i}^n - \psi_{i}^{n-1}}{\Delta t} \\
	& +h \Delta t \sum_{n=0}^{N_T-1} \sum_{i=0}^{N_h-1} \left(2 r \frac{\phi_{i+1}^{n} - \phi_{i}^{n}}{h} - rb_{i}^{n+1}  - y_{i}^{n+1} -r c_{i+1}^{n+1} - z_{i+1}^{n+1} \right) \frac{\psi_{i+1}^{n} - \psi_{i}^{n}}{h}.
\end{align*}
After a discrete integration by parts, we deduce that $\phi$ must satisfy the following set of equations:
for all $n$, inside the domain the equation is the same as the periodic case. 
Moreover, for $n=0$:
for $i = 0$
	\begin{align*}
		& 
		- r \frac{\phi_0^{1} - \phi_0^{0} }{(\Delta t)^2}  + 2 r \frac{\phi_0^{0} - \phi_{1}^{0}}{h^2}
		=  
		- r \left( \frac{a_0^{1}}{\Delta t} + \frac{b_{0}^{1}}{h} + \frac{c_{1}^{1}}{h}\right)
		-\left( \frac{m_0^{1}- {\widetilde m}_0^0}{\Delta t}
		  + \frac{y_{0}^{1}}{h} + \frac{z_{1}^{1}}{h} \right),
	\end{align*}
	and for $i = N_h$
	\begin{align*}
		& - r \frac{\phi_{N_h}^{1} - \phi_{N_h}^{0} }{(\Delta t)^2} + 2 r \frac{\phi_{N_h}^{0} - \phi_{N_h-1}^{0}}{h^2}
		=
		-r\left(\frac{a_{N_h}^{1}}{\Delta t} - \frac{b_{N_h-1}^{1}}{h} - \frac{c_{N_h}^{1}}{h}\right)
		- \left( \frac{ m_{N_h}^{1} - {\widetilde m}_{N_h}^0}{\Delta t} - \frac{y_{N_h-1}^{1}}{h} - \frac{z_{N_h}^{1}}{h} \right).
	\end{align*}
For $n \in \{1,\dots,N_T-2\}$:
for $i = 0$
	\begin{align*}
		&r \frac{2\phi_0^{n} - \phi_0^{n-1} - \phi_0^{n+1} }{(\Delta t)^2} + 2 r \frac{\phi_0^{0} - \phi_{1}^{0}}{h^2} 
		=
		-r \left( \frac{a_0^{n+1} - a_0^{n}}{\Delta t} + \frac{b_{0}^{1}}{h} + \frac{c_{1}^{1}}{h}\right)
		-\left( \frac{m_0^{n+1} - m_0^{n}}{\Delta t} + \frac{y_{0}^{1}}{h} + \frac{z_{1}^{1}}{h} \right),
	\end{align*}
and for $i = N_h$
	\begin{align*}
		&r \frac{2\phi_{N_h}^{n} - \phi_{N_h}^{n-1} - \phi_{N_h}^{n+1} }{(\Delta t)^2}+ 2 r \frac{\phi_{N_h}^{0} - \phi_{N_h-1}^{0}}{h^2}
		=
		-r \left( \frac{a_{N_h}^{n+1} - a_{N_h}^{n}}{\Delta t} - \frac{b_{N_h-1}^{1}}{h} - \frac{c_{N_h}^{1}}{h}\right) 
		- \left( \frac{m_{N_h}^{n+1} - m_{N_h}^{n}}{\Delta t} - \frac{y_{N_h-1}^{1}}{h} - \frac{z_{N_h}^{1}}{h} \right).
	\end{align*}
For $n=N_T-1$:
	for $i = 0$
	\begin{align*}
		&r \frac{2\phi_0^{N_T-1} - {\widetilde u}_0^{N_T} - \phi_0^{N_T-2} }{(\Delta t)^2} + 2 r \frac{\phi_0^{N_T-1} - \phi_{1}^{N_T-1}}{h^2}
		\\
		=\,&
		-r \left(\frac{a_i^{N_T} - a_i^{N_T-1}}{\Delta t} + \frac{b_{0}^{N_T}}{h} + \frac{c_{1}^{N_T}}{h}\right) - \left( \frac{m_i^{N_T} - m_i^{N_T-1}}{\Delta t} + \frac{y_{0}^{N_T}}{h} + \frac{z_{1}^{N_T}}{h} \right),
	\end{align*}
and for $i = N_h$
	\begin{align*}
		&r \frac{2\phi_{N_h}^{N_T-1} - {\widetilde u}_{N_h}^{N_T} - \phi_{N_h}^{N_T-2} }{(\Delta t)^2} + 2 r \frac{\phi_{N_h}^{N_T-1} - \phi_{N_h-1}^{N_T-1}}{h^2}
		\\
		=\,&
		-r \left( \frac{a_i^{N_T} - a_i^{N_T-1}}{\Delta t} - \frac{b_{N_h-1}^{N_T}}{h} - \frac{c_{N_h}^{N_T}}{h} \right) - \left( \frac{m_i^{N_T} - m_i^{N_T-1}}{\Delta t} - \frac{y_{N_h-1}^{N_T}}{h} - \frac{z_{N_h}^{N_T}}{h} \right).
	\end{align*}

\paragraph{Step 2 : update of $q = (a,b,c)$ :}
This step is similar to the periodic case (see~\S~\ref{subsec:algoADMM}): 
we obtain one optimization problem at each point of the domain (including he boundaries). The optimization problems on the boundaries differ slightly from the optimization problems inside the domain, but they are dealt with using the same techniques.

\paragraph{Step 3 : update of $\sigma = (m,y,z)$ :}
The last step is similar to the periodic case: $	\sigma^{k+1} = \sigma^k - r \Big(\Lambda_h(\phi^{k+1}) - q^{k+1} \Big)$.

\section{Numerical results}\label{sec:numerics}
The methods discussed above have been implemented for both periodic and state constraint boundary conditions, 
and tested on several examples that will be reported below. In particular, we will discuss the  convergence  of the iterative method in \S~\ref{subsec:numCV} and compare the  results obtained for different sets of parameters in \S~\ref{subsec:influenceParams}.

\subsection{Description of the test cases}
In what follows,  $\Omega=(0,1)^2$ and $T=1$ except when explicitly mentioned. The Lagrangian will always be of the form ~\eqref{eq:6}.

\paragraph{Test case 1: evacuation of a square subdomain. }
The first test case is similar to the one  discussed in~\cite{BenamouCarlier2015ALG2}, 
except that we deal with a mean field type control problem instead of a mean field game and 
that the model includes congestion. \\
We take  $\alpha = 0.5, \beta = 2, \ell(x,m) = m$, and we impose periodic boundary conditions.\\
The  agents are  uniformly distributed in a square subdomain of side $1/2$ at the center of the domain and the terminal cost
 is an incitation for  the agents to leave the central subdomain.
 More precisely, the initial density and the terminal cost are given by $m_0 = u_T = \indic_{[1/4,3/4] \times [1/4,3/4]}$.
     These data are displayed in Figure~\ref{fig:squareevacuation-Data}. 
\begin{figure}
\centering
\begin{subfigure}{.4\textwidth}
  \centering
  \includegraphics[width=\linewidth]{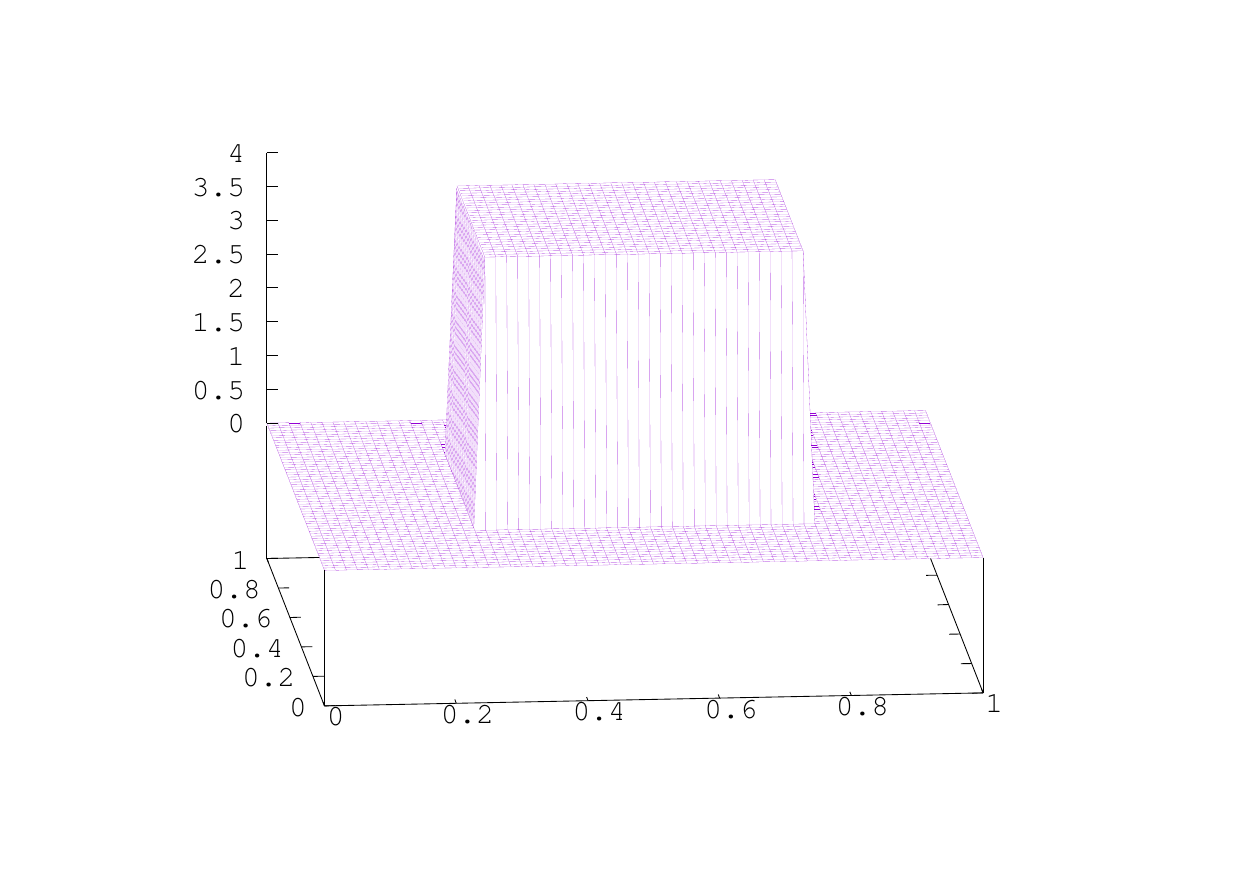}
  \caption{Initial distribution $m = m_0$.}
\end{subfigure}%
\begin{subfigure}{.4\textwidth}
  \centering
  \includegraphics[width=\linewidth]{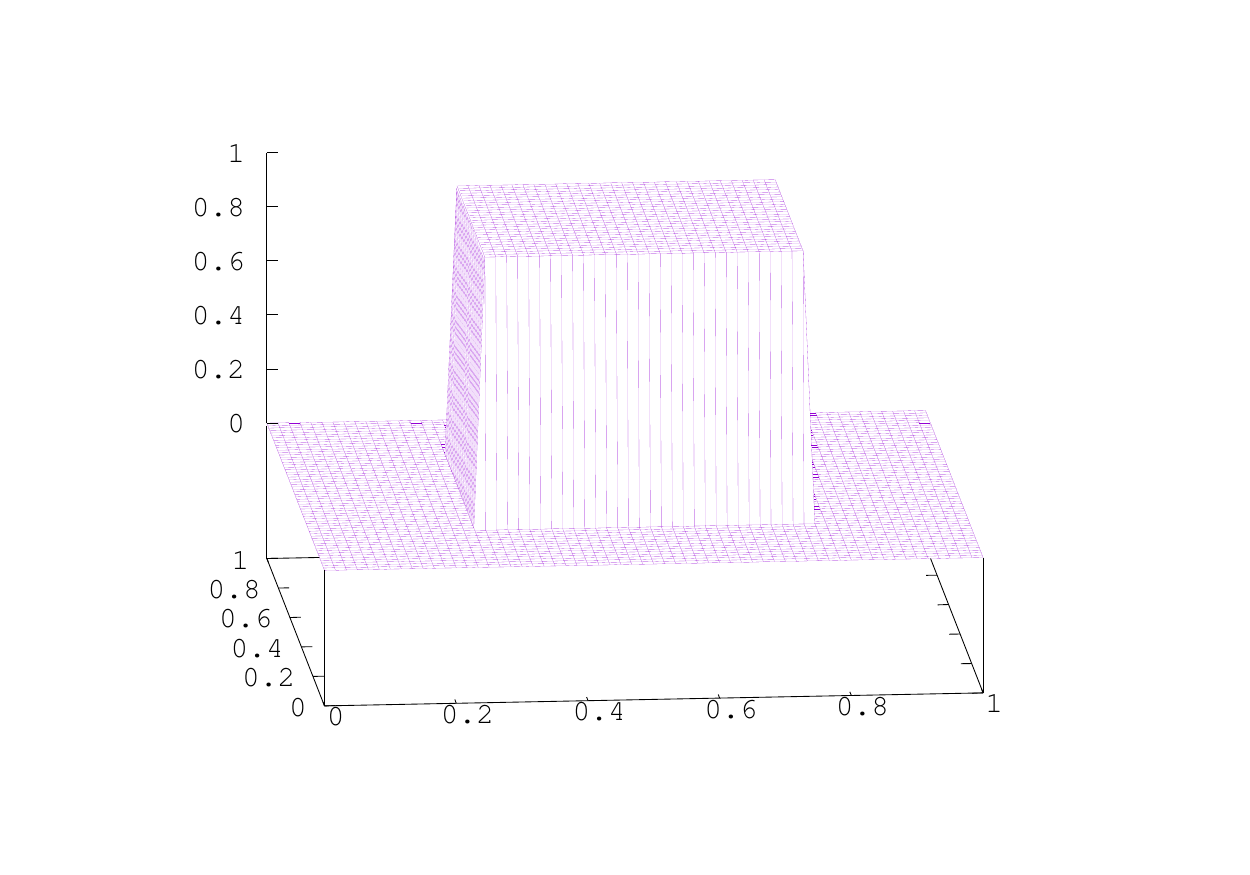}
  \caption{Terminal cost $\phi = u_T$.}
\end{subfigure}
\caption{The data for Test case 1.}
\label{fig:squareevacuation-Data}
\end{figure}

\paragraph{Test case 2: from one corner to the opposite one. }
 Here, the initial density and the terminal cost are given by
$
	m_0 = \indic_{[0,0.2] \times [0,0.2]},$ and  $u_T = 1 - \indic_{[0.8, 1] \times [0.8, 1]}.
$
The data are displayed in Figure~\ref{fig:c2c-Data}. The agents are initially uniformly distributed in a square subdomain located at the bottom-left corner of $\Omega$.
The terminal cost makes the agents move to the top-right corner.  
In this test case, we will compare the effects of  the two  boundary conditions discussed above, see Figure~\ref{fig:c2c-CompareSC}.\\
 We will also present a case in which there is a square obstacle near the center of the domain, see Figure~\ref{fig:c2c-SC-obstacle}.
\begin{figure}
\centering
\begin{subfigure}{.4\textwidth}
  \centering
  \includegraphics[width=\linewidth]{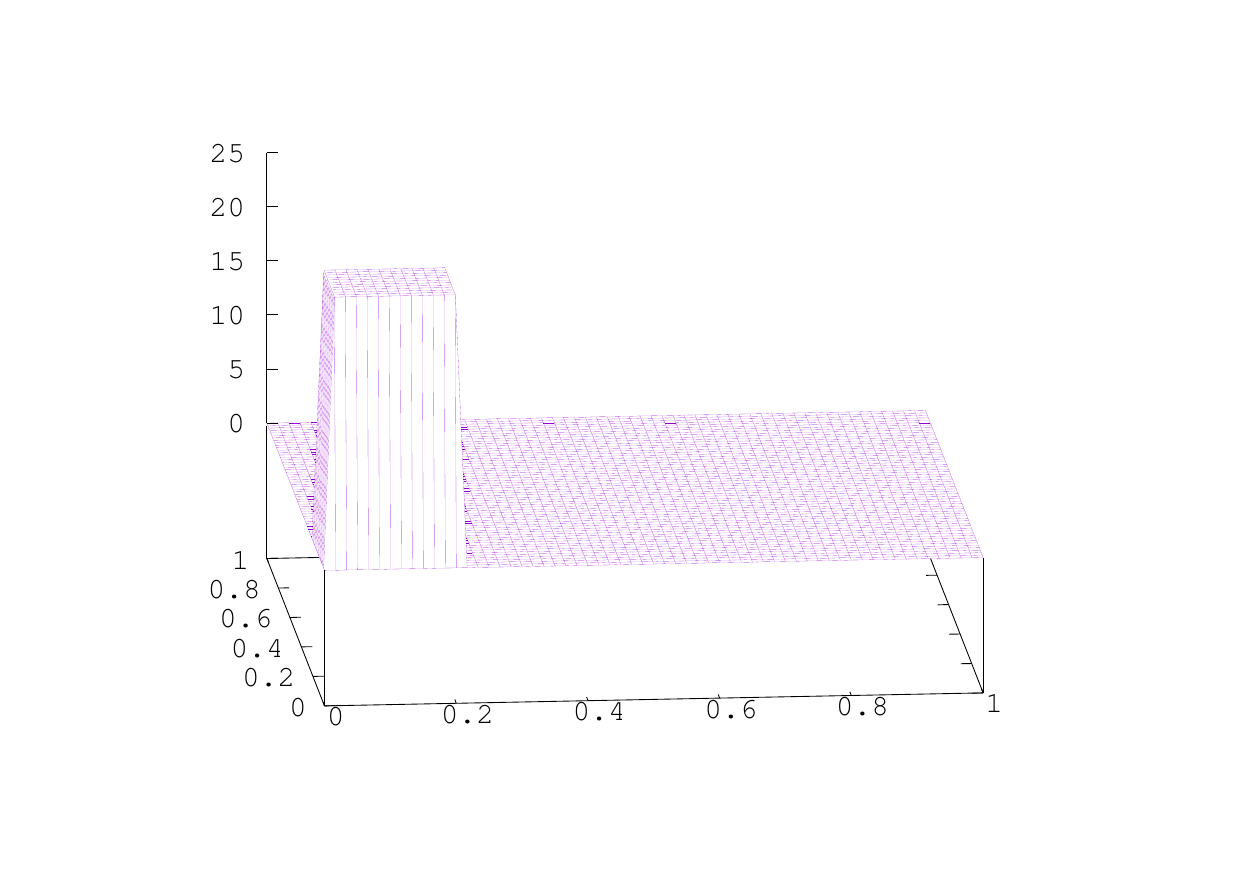}
  \caption{Initial distribution $m = m_0$.}
\end{subfigure}
\begin{subfigure}{.4\textwidth}
  \centering
  \includegraphics[width=\linewidth]{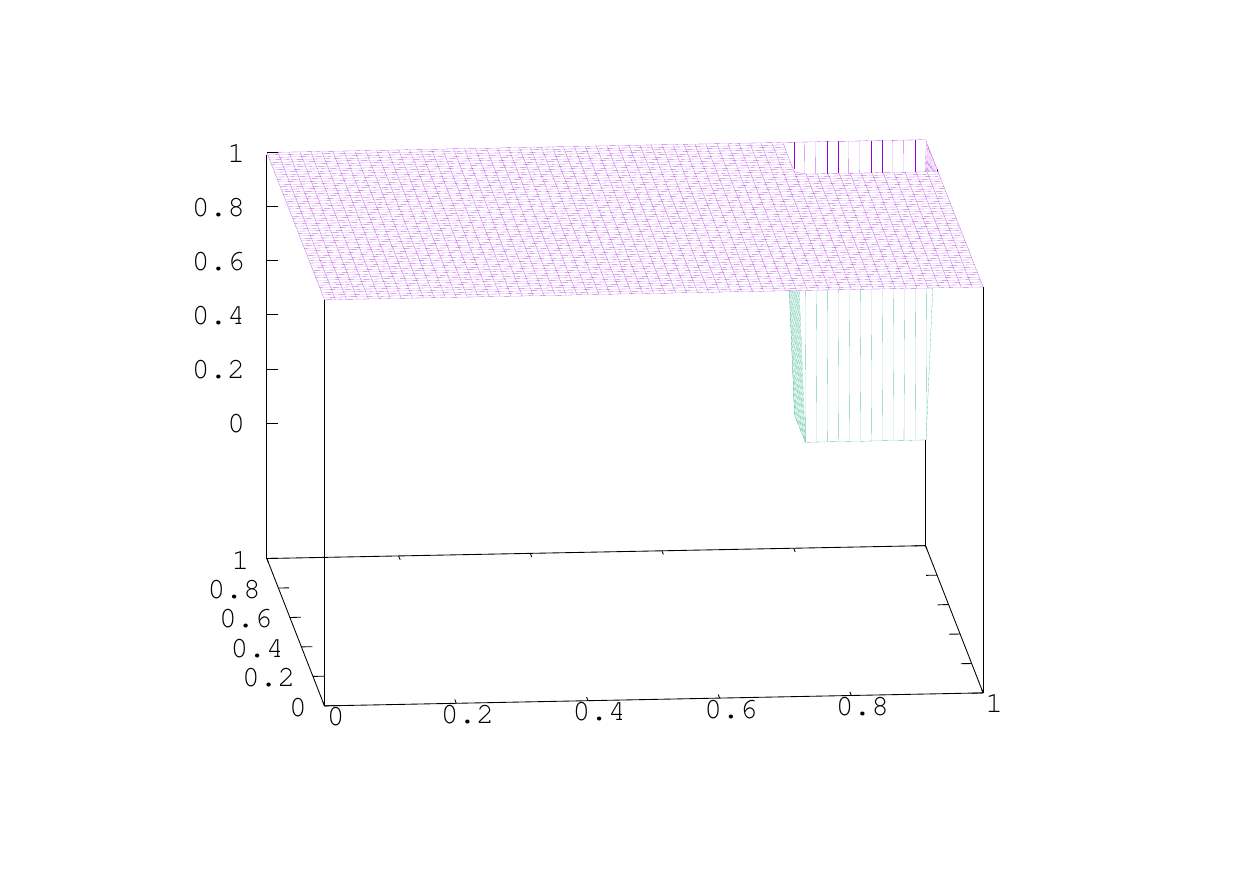}
  \caption{Terminal cost $\phi = u_T$.}
\end{subfigure}
\caption{The data for Test case 2.}
\label{fig:c2c-Data}
\end{figure}

\paragraph{Test case 3: small hump vs peaky hump. } In this test case, the mass is initially distributed in two disconnected regions. 
The initial density of agents is the sum of two nonnegative functions $\psi_1$ and $ \psi_2$ with disjoint supports, both of total mass $1/2$.
The graph of $\psi_1$ is a sinusoidal hump (small amplitude and large support). The graph of $\psi_2$ is a very peaky 
exponential hump (alternatively, we could have taken an approximation of a Dirac mass, but the graphical representation would have been
 more difficult). The terminal cost is an incitation for the agents to move towards the center of the domain. More precisely,
take $(x_0,y_0) = (1/4,3/4)$. The initial distribution is given by $m_0(x) = \psi_1(x)+\psi_2(x)$ with  
\begin{align*}
        \psi_1(x) &= \indic_{[1/2,1] \times [0,1/2]}(x) \frac{\psi^1(x)}{2 \int_{[1/2,1] \times [0,1/2]} \psi^1(y) d y}, \quad  \psi_2(x)=\indic_{[0,1/2] \times [1/2,1]}(x) \frac{\psi^2(x)}{2 \int_{[0,1/2] \times [1/2,1]} \psi^2(y) d y},\\
	\psi^1(x) &= \left( -\sin(2 x_1 \pi) \sin(2 x_2 \pi) - 0.5\right)^+, \quad 	\psi^2(x) = \exp\left(-400\left[(x_1-x_0)^2+(y_1-y_0)^2\right]\right),
\end{align*}
and the terminal cost is given by
$
	u_T(x) = -\exp\left(-20\left[x_1^2 + x_2^2\right]\right).
$
The data are displayed  in Figure~\ref{fig:diracbosse-Data}. We expect that if $\alpha$ is not too small, then
the whole population moves toward the center of the domain, but that the part of the population
 which is initially very dense takes more time to reach the center.
Moreover, the shape of the peaky hump should be modified during its migration to the center.
\\
We will use this example in order to illustrate the impact of the parameter $\alpha$ and of the function $\ell$ on the dynamics of the population, see Figures~\ref{fig:diracbosse-CompareAlpha-ell001} and~\ref{fig:diracbosse-ell-sc}. 

\begin{figure}
\centering
\begin{subfigure}{.4\textwidth}
  \centering
  \includegraphics[width=\linewidth]{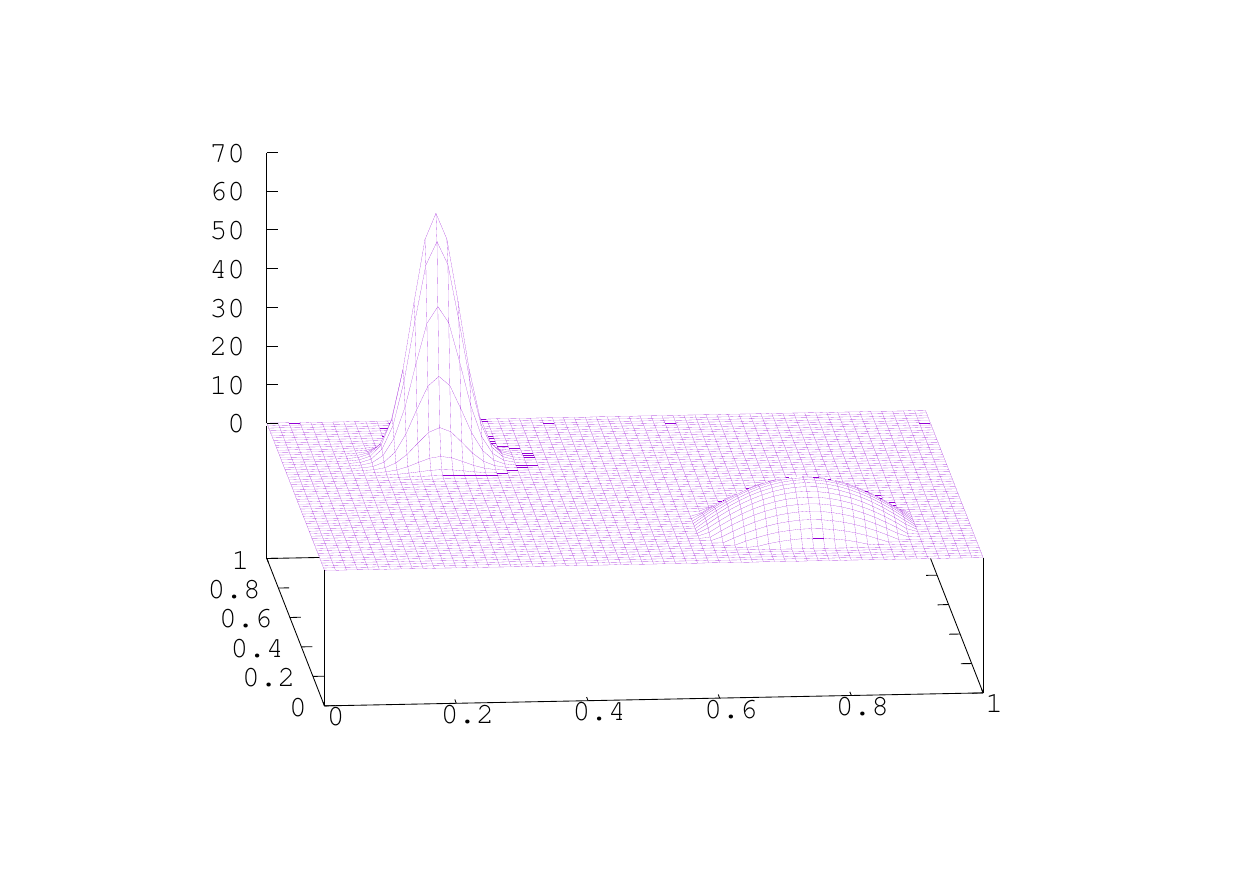}
  \caption{Initial distribution $m = m_0$.}
\end{subfigure}
\begin{subfigure}{.4\textwidth}
  \centering
  \includegraphics[width=\linewidth]{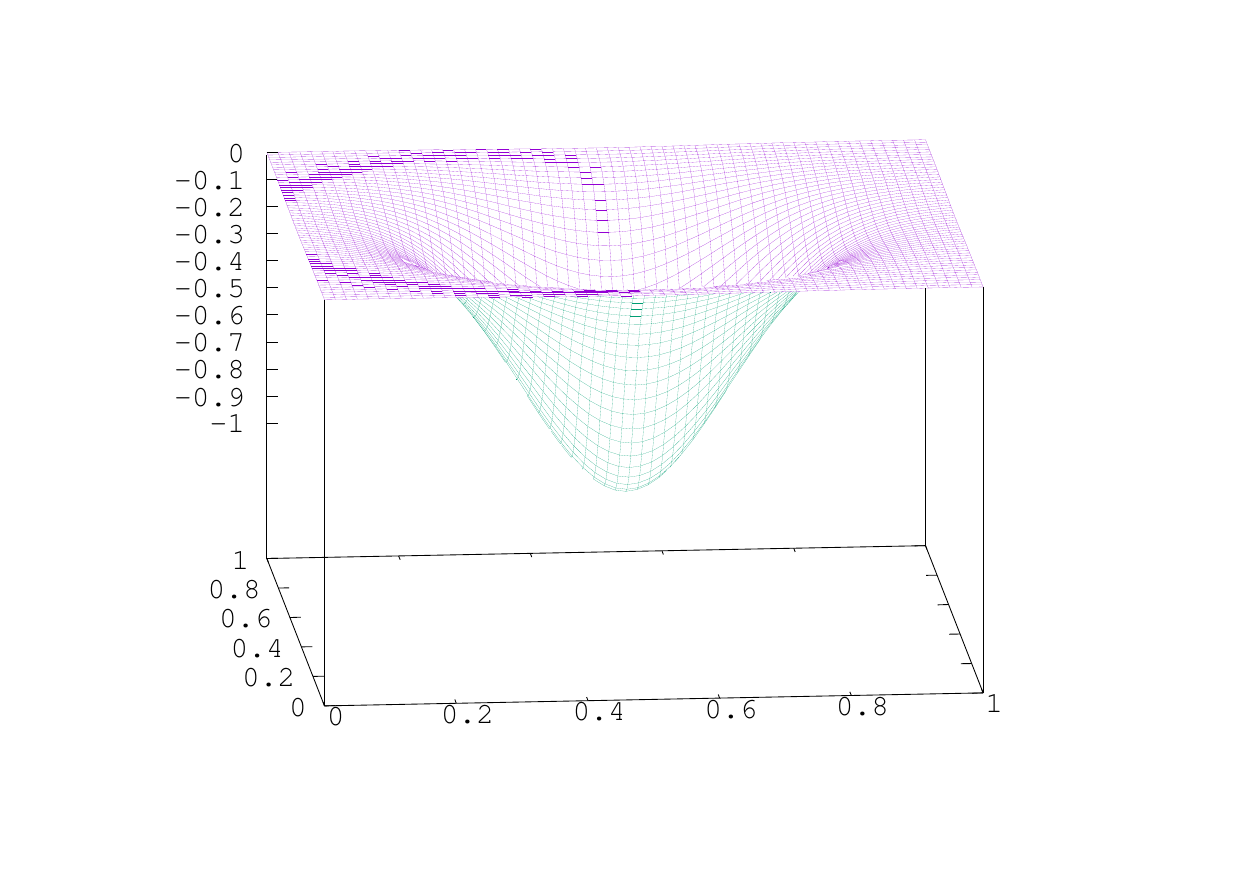}
  \caption{Terminal cost $\phi = u_T$.}
\end{subfigure}
\caption{Data for Test case 3.}
\label{fig:diracbosse-Data}
\end{figure}

\subsection{Convergence of the ADMM}\label{subsec:numCV}
\begin{remark}
	As pointed out in~\cite{FortinGlowinski2000augmented} (\S~5.3 of Chapter~3), the convergence of ALG2 is sensitive to the augmentation parameter $r$. In Figure~\ref{fig:squareevacuation-Convergence-ell1-r} we have plotted the convergence history of the HJB residual for two values of $r$. One can see that the convergence is faster with $r=0.1$ than with $r=10$ (at least in Test case 1).
	However, the choice $r=1$ generally gives good results. We take the latter value in our numerical experiments.
\end{remark}
For Test case 1 described above, the  convergence histories of ADMM  are displayed 
in Figure~\ref{fig:squareevacuation-Convergence-ell1}. We use the criteria described in~\S~\ref{subsec:convergenceCrit}, plotted in log-log scale.
 The convergence curves   are similar to those  obtained in~\cite{BenamouCarlier2015ALG2}.
In Figures~\ref{fig:cv-16-HJB} and~\ref{fig:cv-30-HJB}, we plot the $\ell^2$ norm (weighted by $m$) 
of  the discrete Bellman equation residuals for two grids, with respectively 
 $16 \times 16\times 16$ (left) and $30 \times 30\times 30$ nodes (right):
In Figures~\ref{fig:cv-16-consensus} and~\ref{fig:cv-30-consensus}, we see  the convergence of $||\Lambda(\phi^k) - q^k||_{\ell^2}$ for the first two coordinates (similar convergence holds for the other coordinates).
Figures~\ref{fig:cv-16-mphi} and~\ref{fig:cv-30-mphi} present the convergence of $||\phi^{k+1} - \phi^k||_{\ell^2}$ and $||m^{k+1} - m^k||_{\ell^2}$. 
\begin{figure}
\centering
\begin{subfigure}{.5\textwidth}
  \centering
  \includegraphics[width=\linewidth]{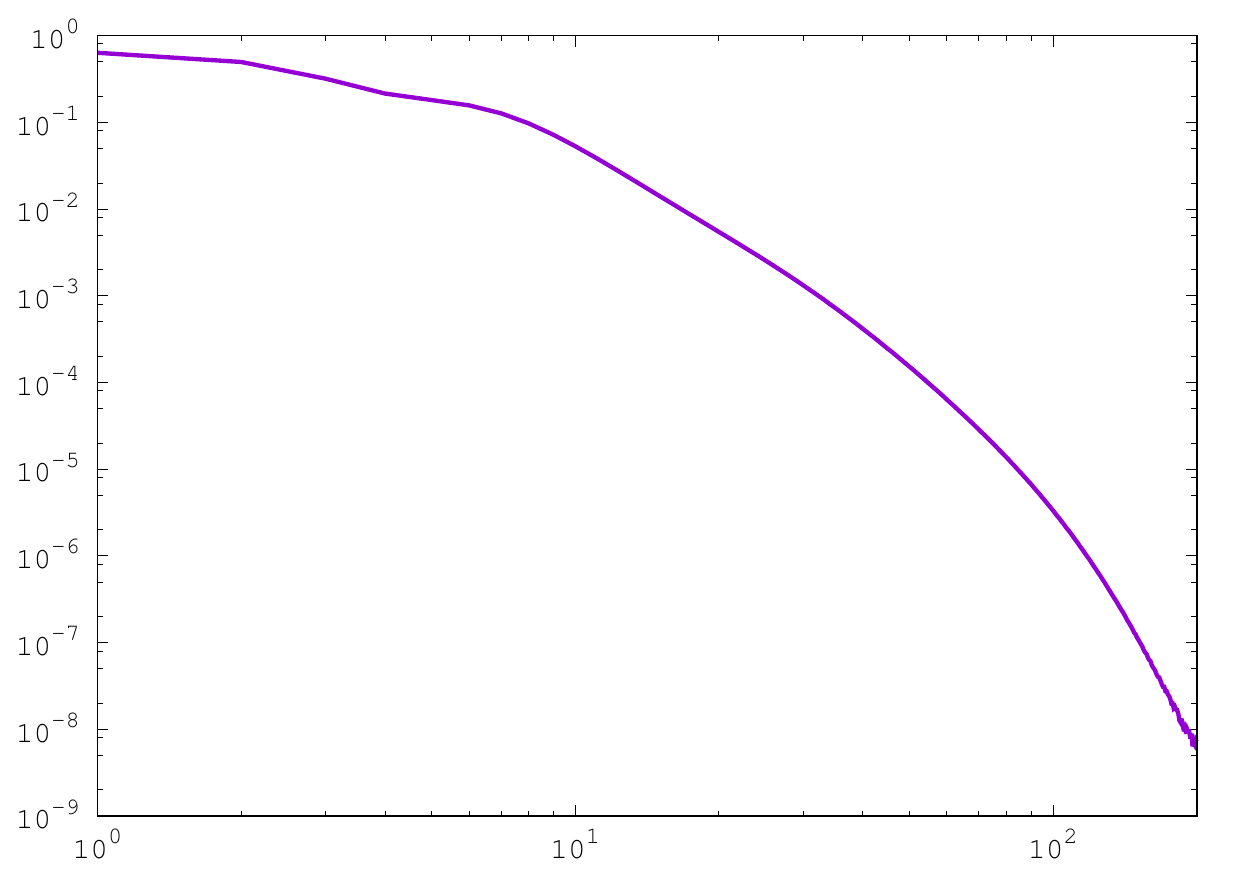}
  \caption{Discrete Bellman equation residual}
  \label{fig:cv-16-HJB}
\end{subfigure}%
\begin{subfigure}{.5\textwidth}
  \centering
  \includegraphics[width=\linewidth]{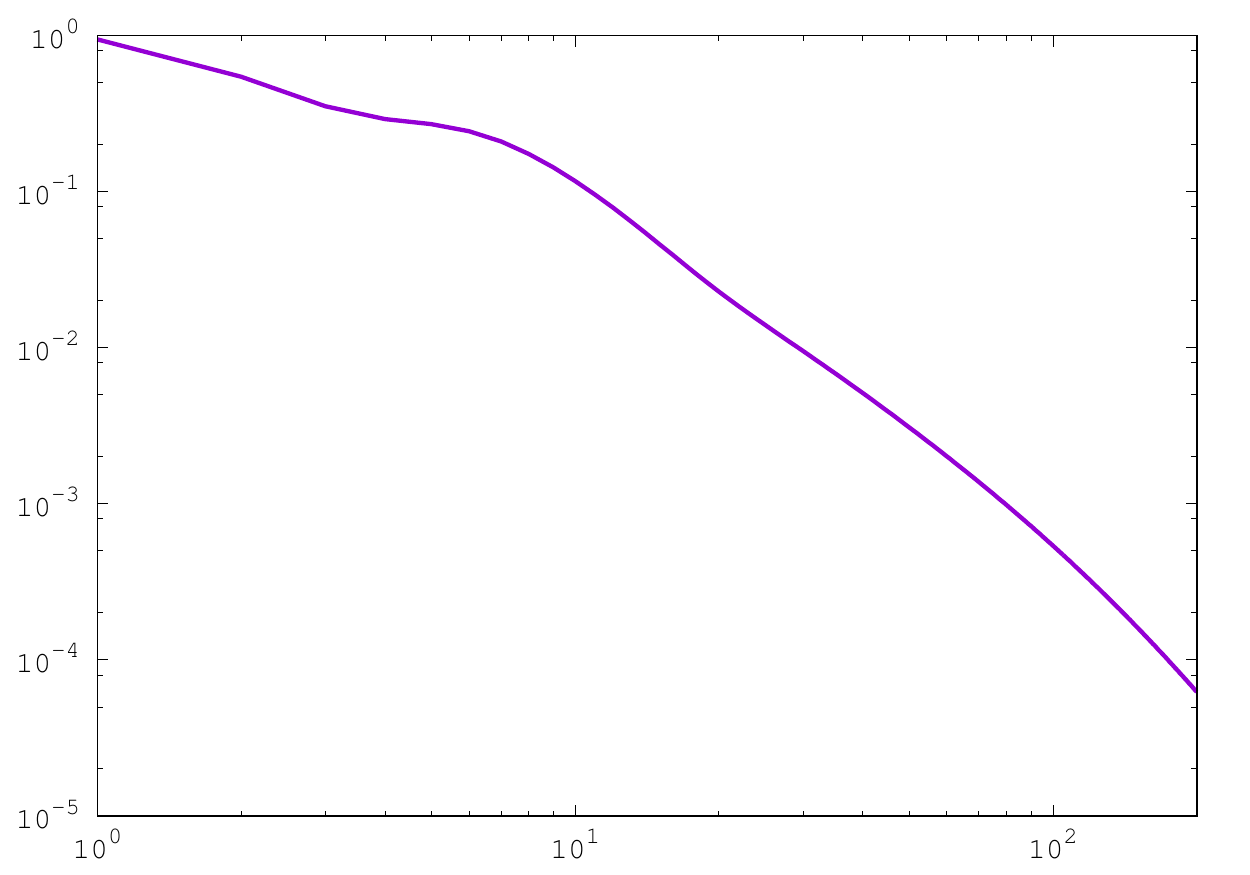}
  \caption{Discrete Bellman equation residual}
  \label{fig:cv-30-HJB}
\end{subfigure}
\begin{subfigure}{.5\textwidth}
  \centering
  \includegraphics[width=\linewidth]{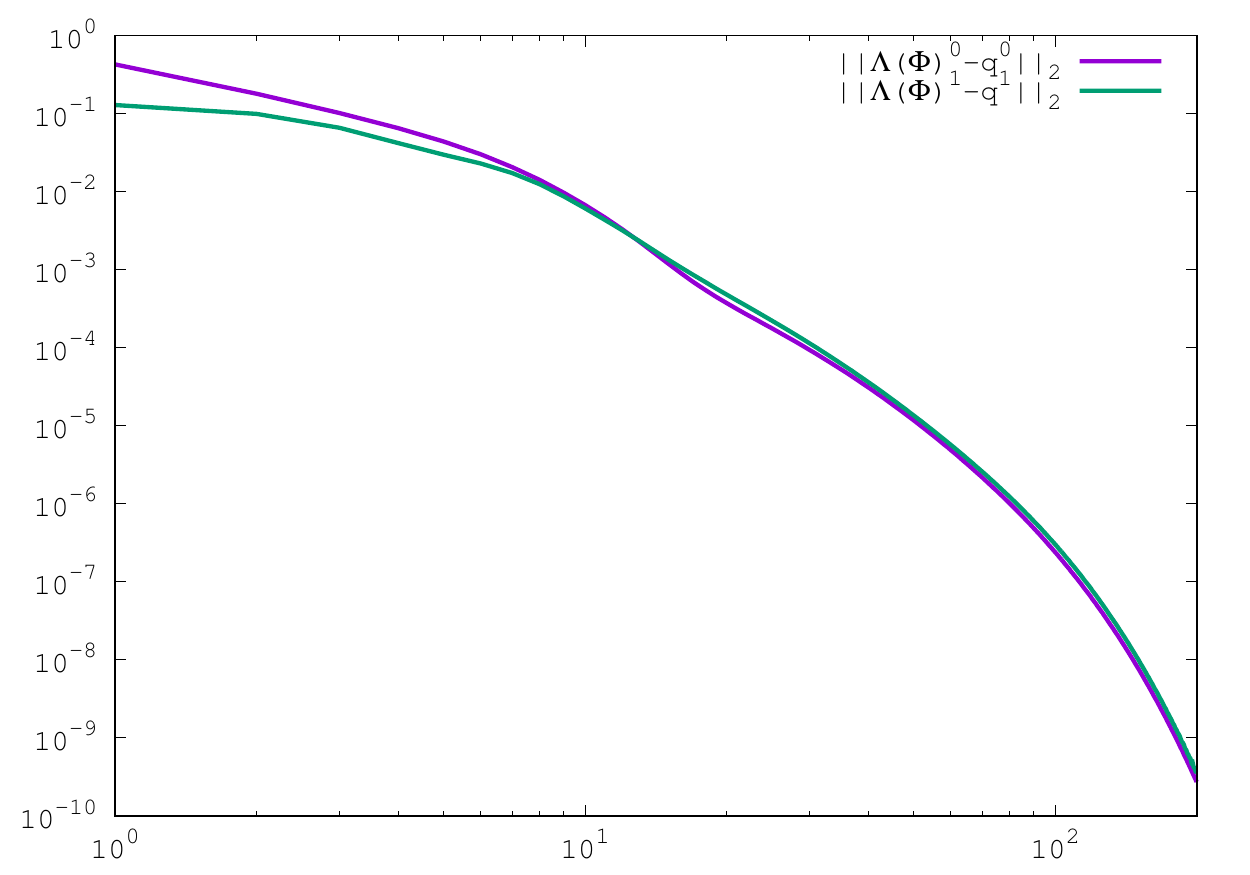}
  \caption{Error between $\Lambda(\phi)$ and $q$}
  \label{fig:cv-16-consensus}
\end{subfigure}%
\begin{subfigure}{.5\textwidth}
  \centering
  \includegraphics[width=\linewidth]{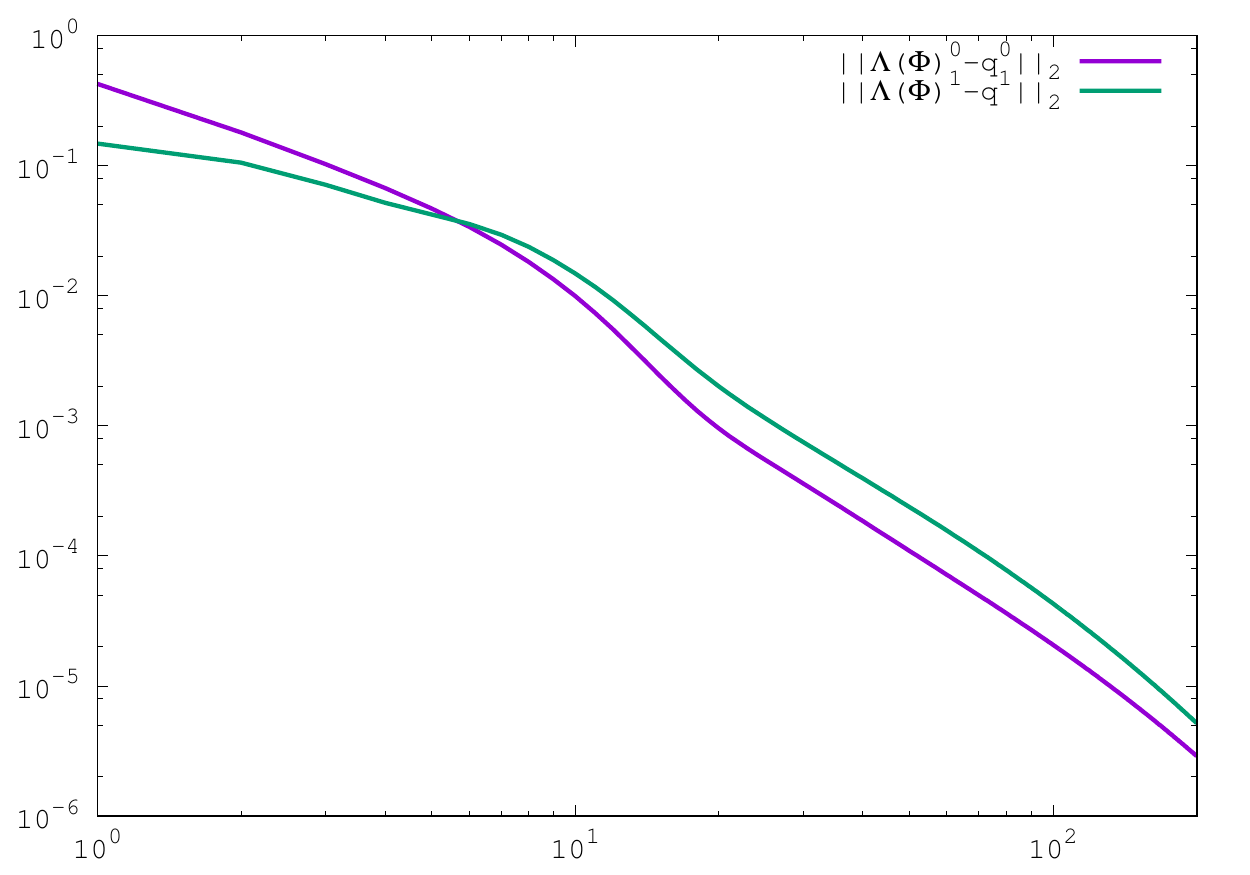}
  \caption{Error between $\Lambda(\phi)$ and $q$}
  \label{fig:cv-30-consensus}
\end{subfigure}
\begin{subfigure}{.5\textwidth}
  \centering
  \includegraphics[width=\linewidth]{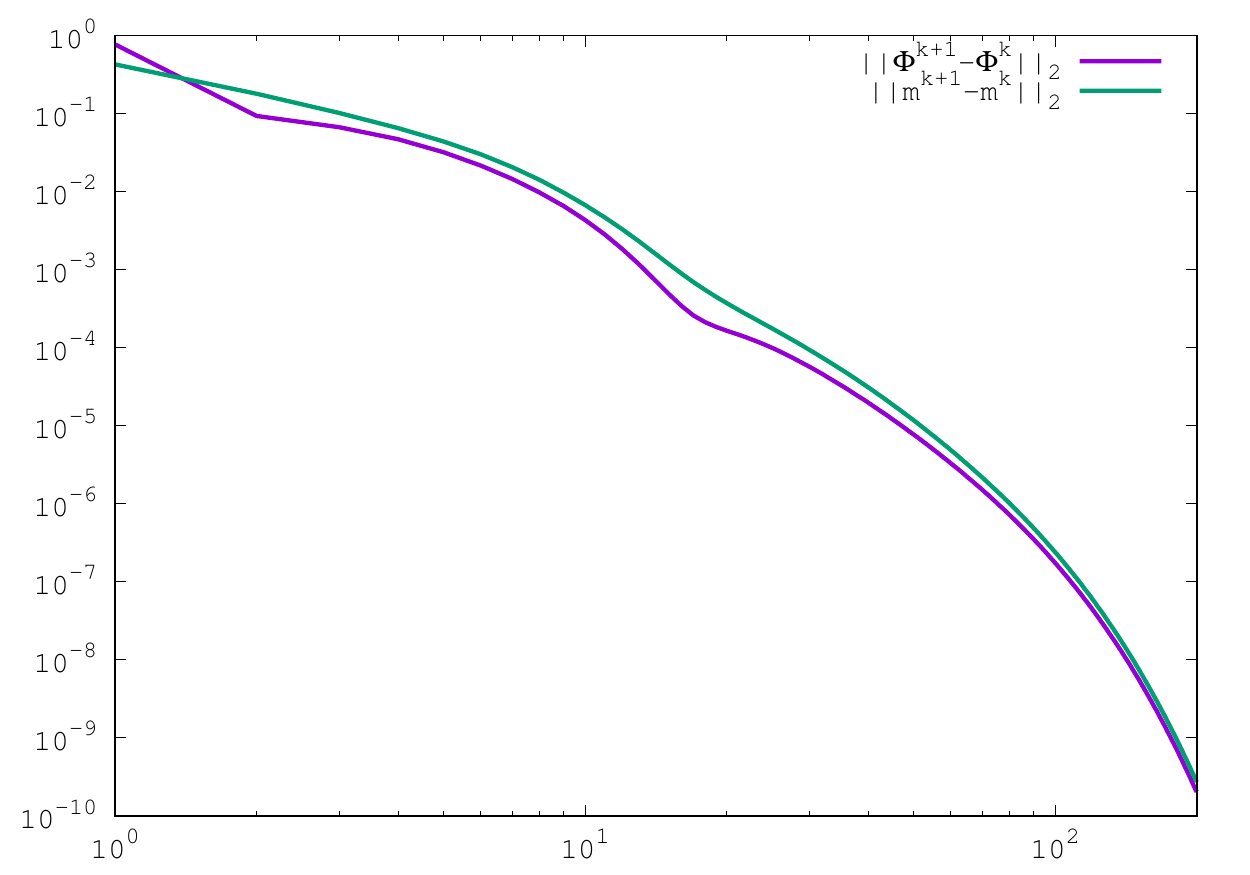}
  \caption{Error between two successive iterates of $m$ and $\phi$}
  \label{fig:cv-16-mphi}
\end{subfigure}%
\begin{subfigure}{.5\textwidth}
  \centering
  \includegraphics[width=\linewidth]{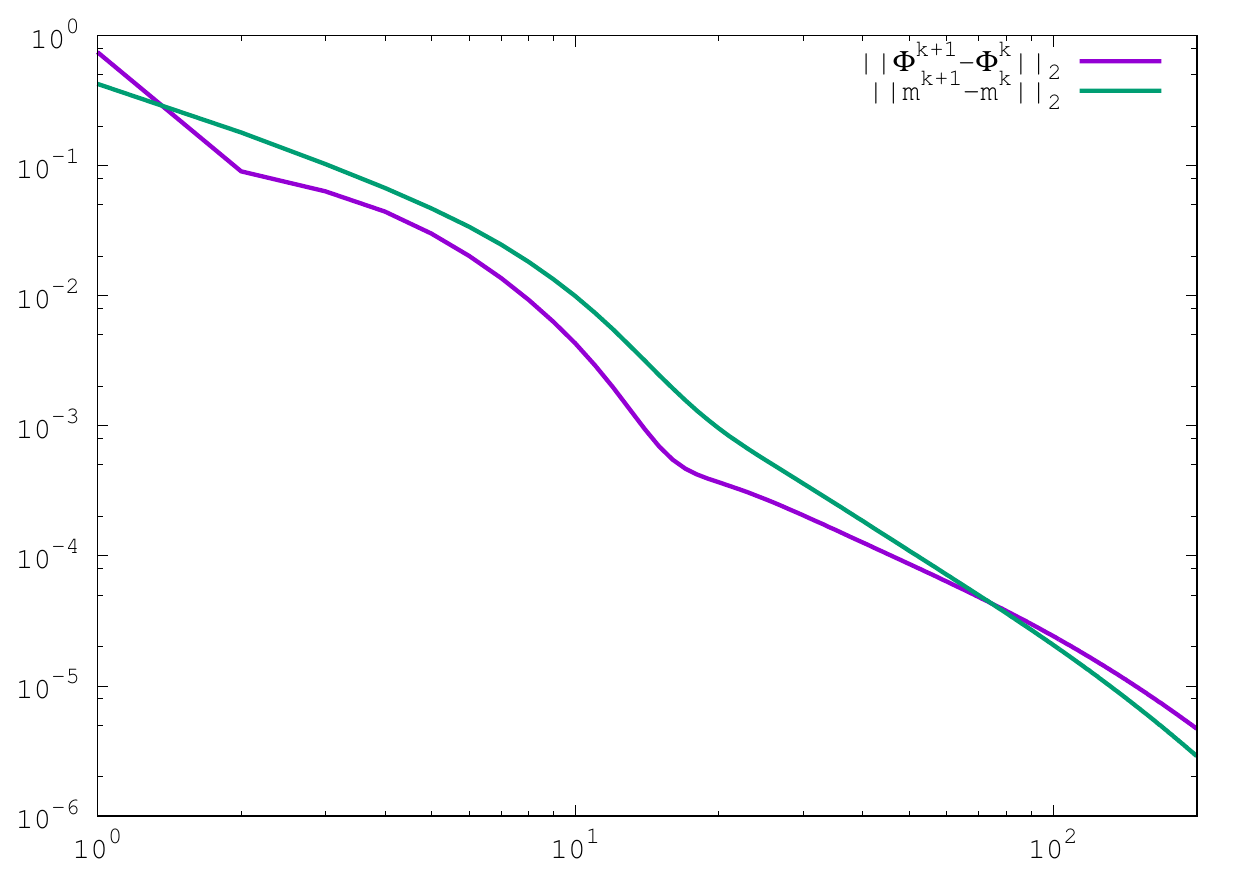}
  \caption{Error between two successive iterates of $m$ and $\phi$}
  \label{fig:cv-30-mphi}
\end{subfigure}
\caption{Convergence of the ADMM for Test case 1 (with periodic boundary conditions, $\alpha=0.5, \beta=2,$ and $\ell(x,m) = m$). Left: grid of $16 \times 16 \times 16$ nodes, right: grid of $32 \times 32 \times 32$ nodes}.
\label{fig:squareevacuation-Convergence-ell1}
\end{figure}

\begin{figure}
\centering
\begin{subfigure}{.5\textwidth}
  \centering
  \includegraphics[width=\linewidth]{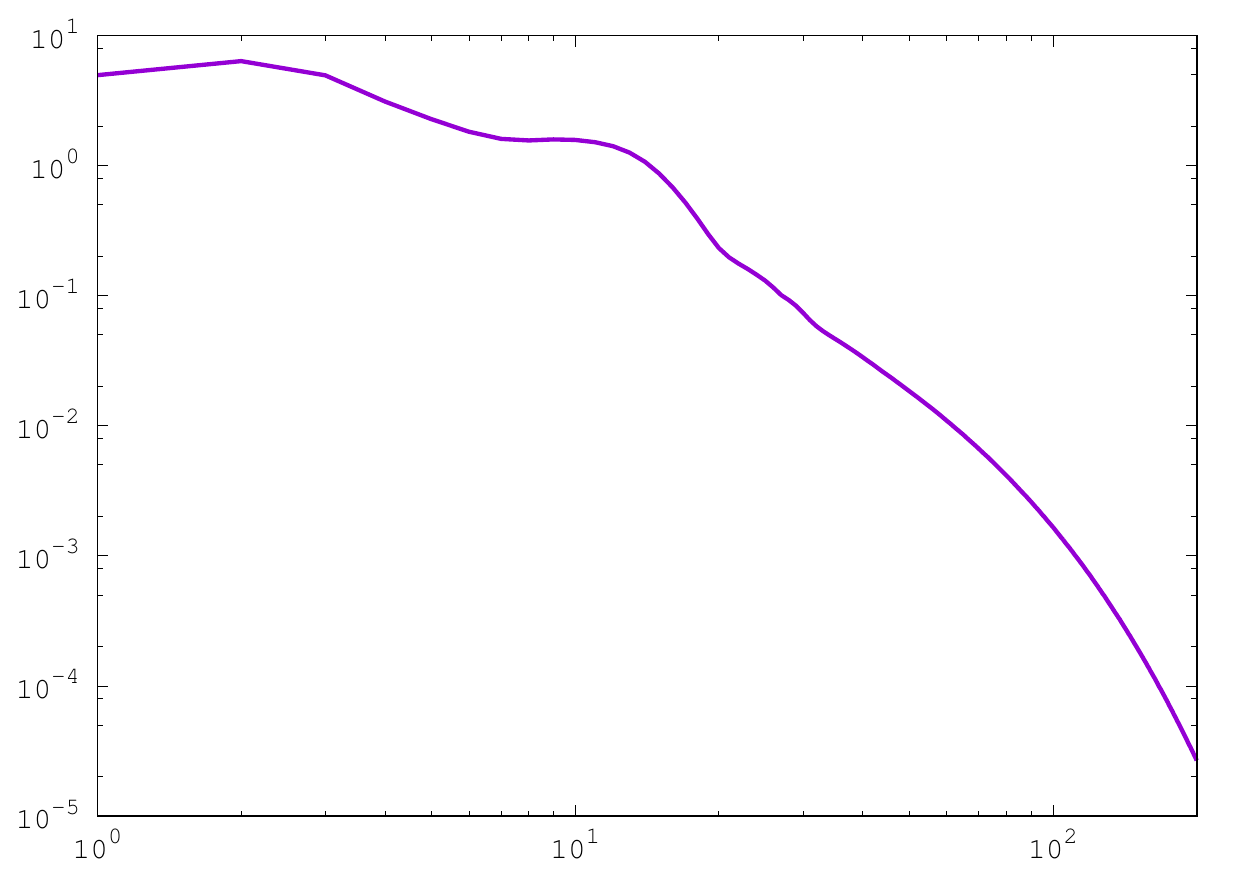}
\end{subfigure}
\begin{subfigure}{.5\textwidth}
  \centering
  \includegraphics[width=\linewidth]{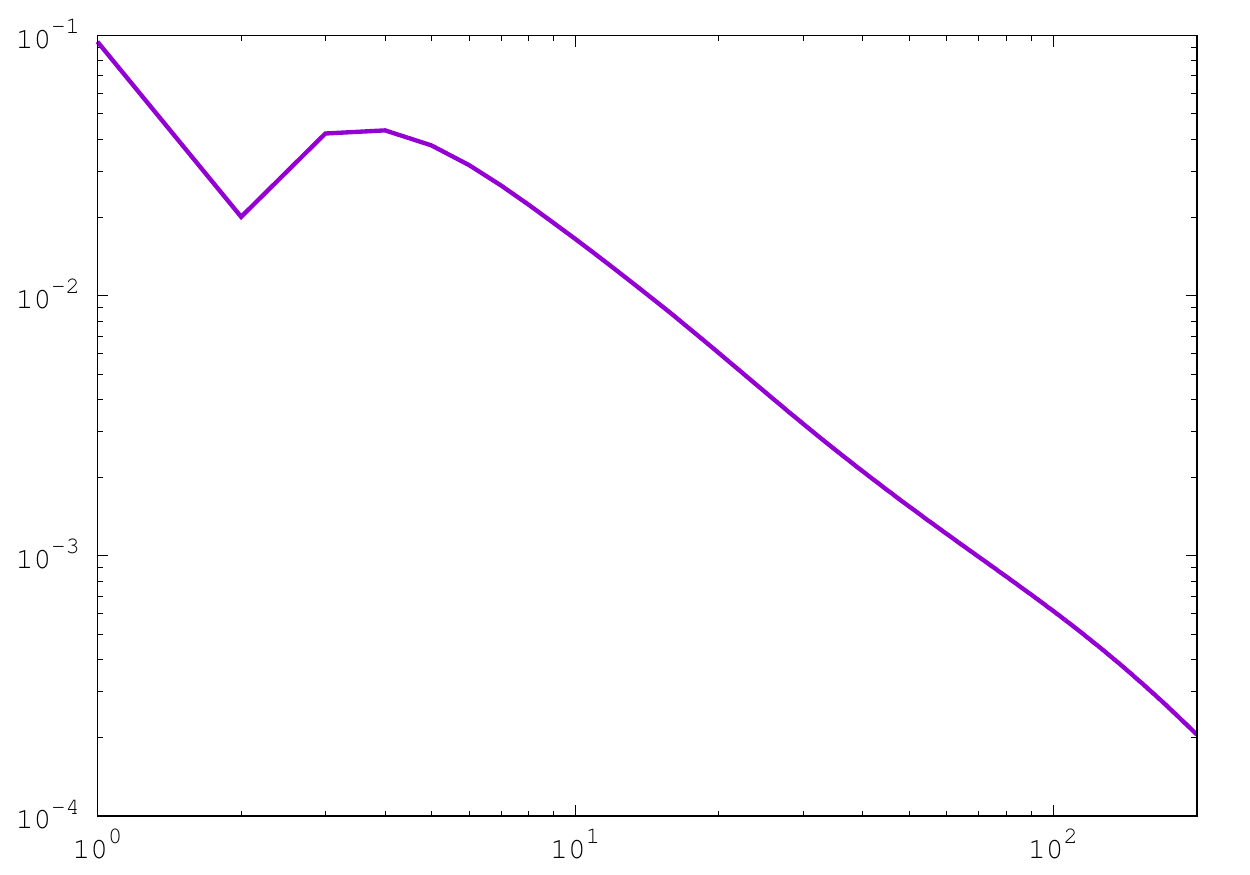}
\end{subfigure}
\caption{HJB residual for Test case 1 (with periodic boundary conditions, $\alpha=0.5, \beta=2, \ell(x,m) = m$, and a grid of $32 \times 32 \times 32$ nodes). Left: $r=0.1$, right: $r=10$.}
\label{fig:squareevacuation-Convergence-ell1-r}
\end{figure}

\subsection{State constraints}\label{subsec:numerics-sc}

\paragraph{Test case 2: from a corner to the opposite one.}

In Figure~\ref{fig:c2c-CompareSC}, we display the evolution of the distribution in Test case 2, with parameters $\alpha=0.01$, $\beta = 2$, and $\ell(x,m) = 0.001m$.

The left (resp. right) part of Figure \ref{fig:c2c-CompareSC} contains  the results obtained for periodic boundary conditions (resp. state constraint boundary condition). 
In both cases,  the population   moves from bottom left corner to the top right corner as expected due to the final cost.
With periodic conditions,  the initial location of the population and the target are close to each other in the torus;
By contrast, with state constraint boundary conditions, the population forms has to cross the unit square along its diagonal. 
Finally, we add a square obstacle at the center of the domain, i.e. the agents cannot penetrate the square
$[0.4, 0.6] \times [0.4, 0.6]$. The dynamics of the population is displayed on Figure~\ref{fig:c2c-SC-obstacle}:  we see that the population splits into two groups to circumvent the obstacle.
 \\
Finally, at time $t=T=1$ (last rows of Figure~\ref{fig:c2c-CompareSC} and \ref{fig:c2c-SC-obstacle}), 
 the mass is concentrated in the top right corner, but the distribution differ in the periodic and the state constrained cases. 
Indeed,  in the periodic case, the agents can travel in any direction and since they stop as soon as they have reached   the square $[0.8, 1] \times [0.8, 1]$, the density is higher near the corner of coordinates $(1,1)$. In the state constrained case, the agents must travel from one corner to the opposite one, and the density is higher near the   point  $(0.8,0.8)$

\begin{figure}
\centering
\begin{subfigure}{.45\textwidth}
  \centering
  \includegraphics[width=\linewidth]{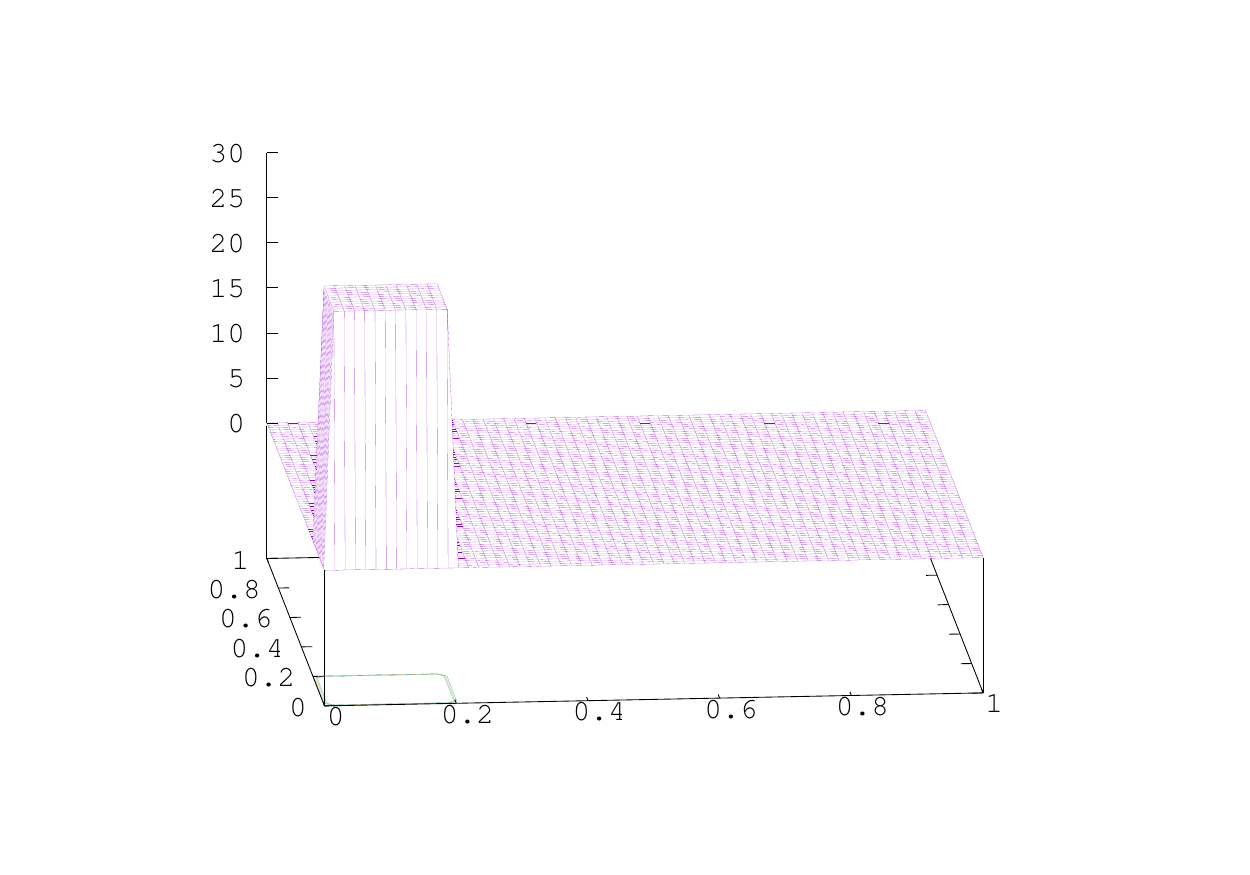}
\end{subfigure}%
\vspace{-3em}
\begin{subfigure}{.45\textwidth}
  \centering
  \includegraphics[width=\linewidth]{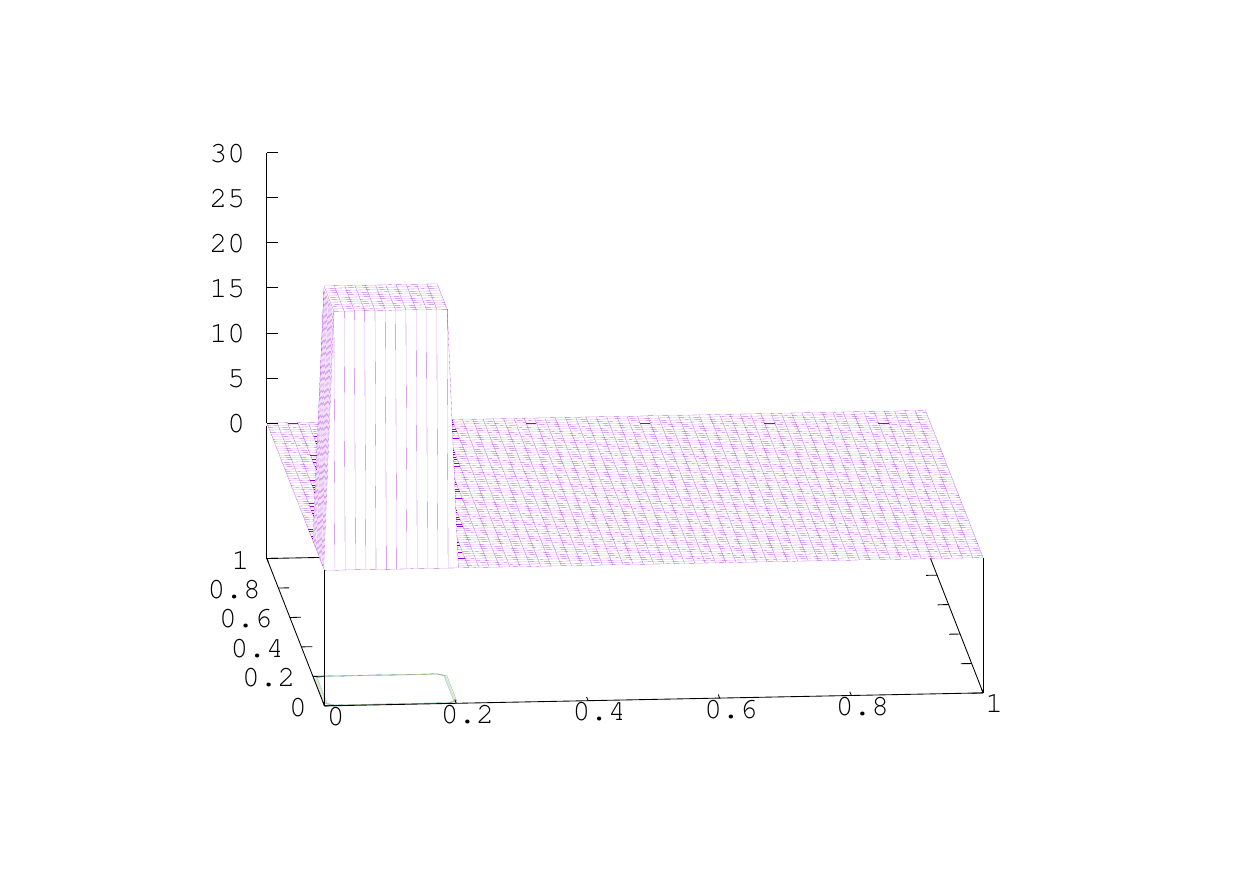}
\end{subfigure}
\vspace{-3em}
\begin{subfigure}{.45\textwidth}
  \centering
  \includegraphics[width=\linewidth]{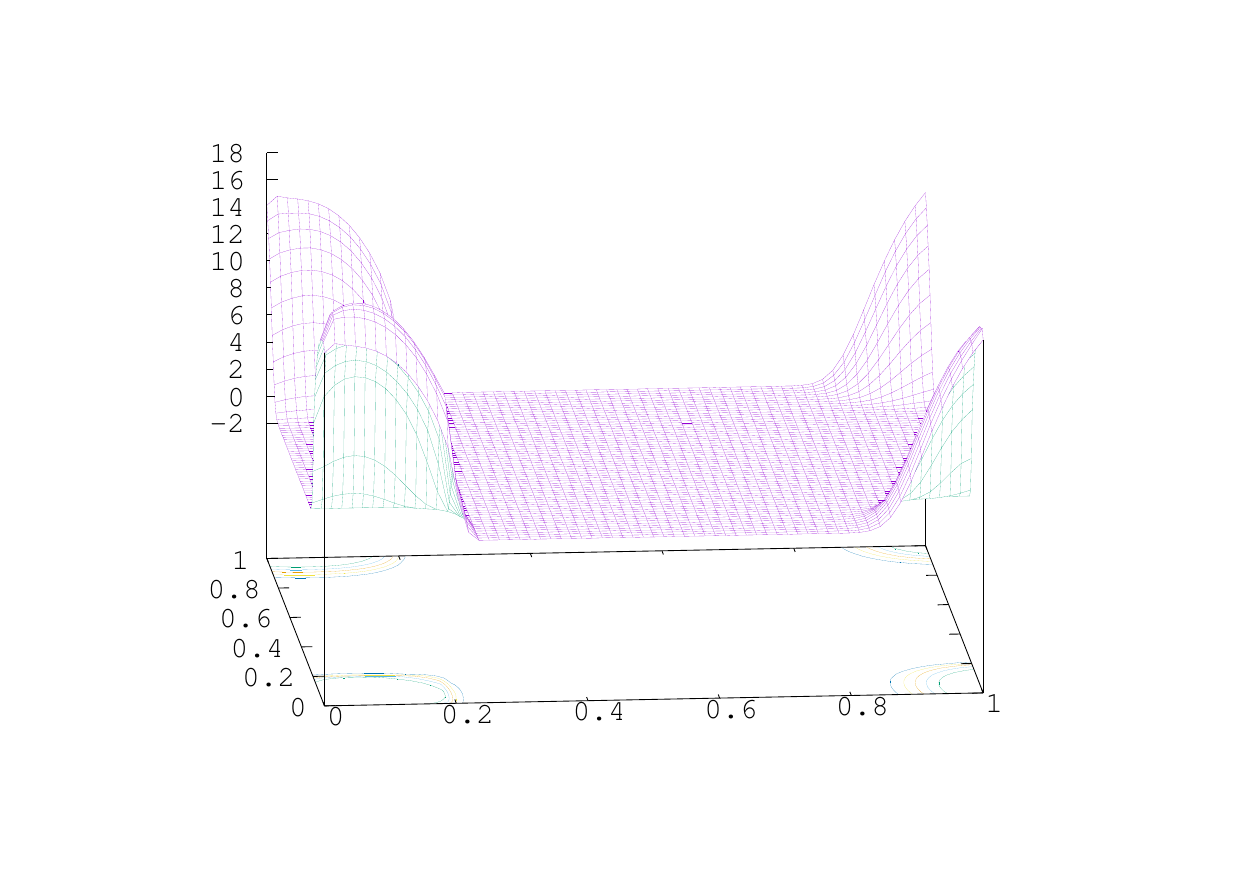}
\end{subfigure}%
\begin{subfigure}{.45\textwidth}
  \centering
  \includegraphics[width=\linewidth]{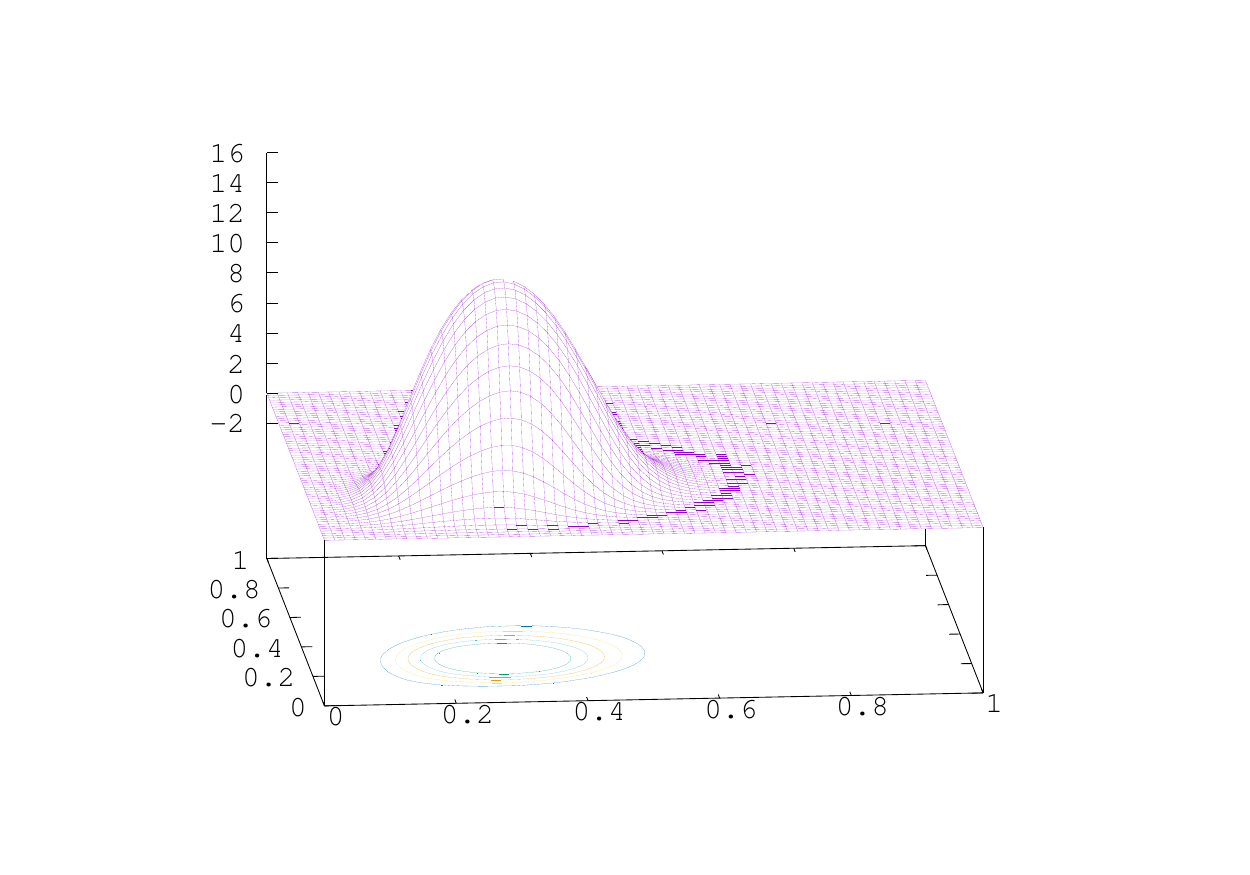}
\end{subfigure}
\vspace{-3em}
\begin{subfigure}{.45\textwidth}
  \centering
  \includegraphics[width=\linewidth]{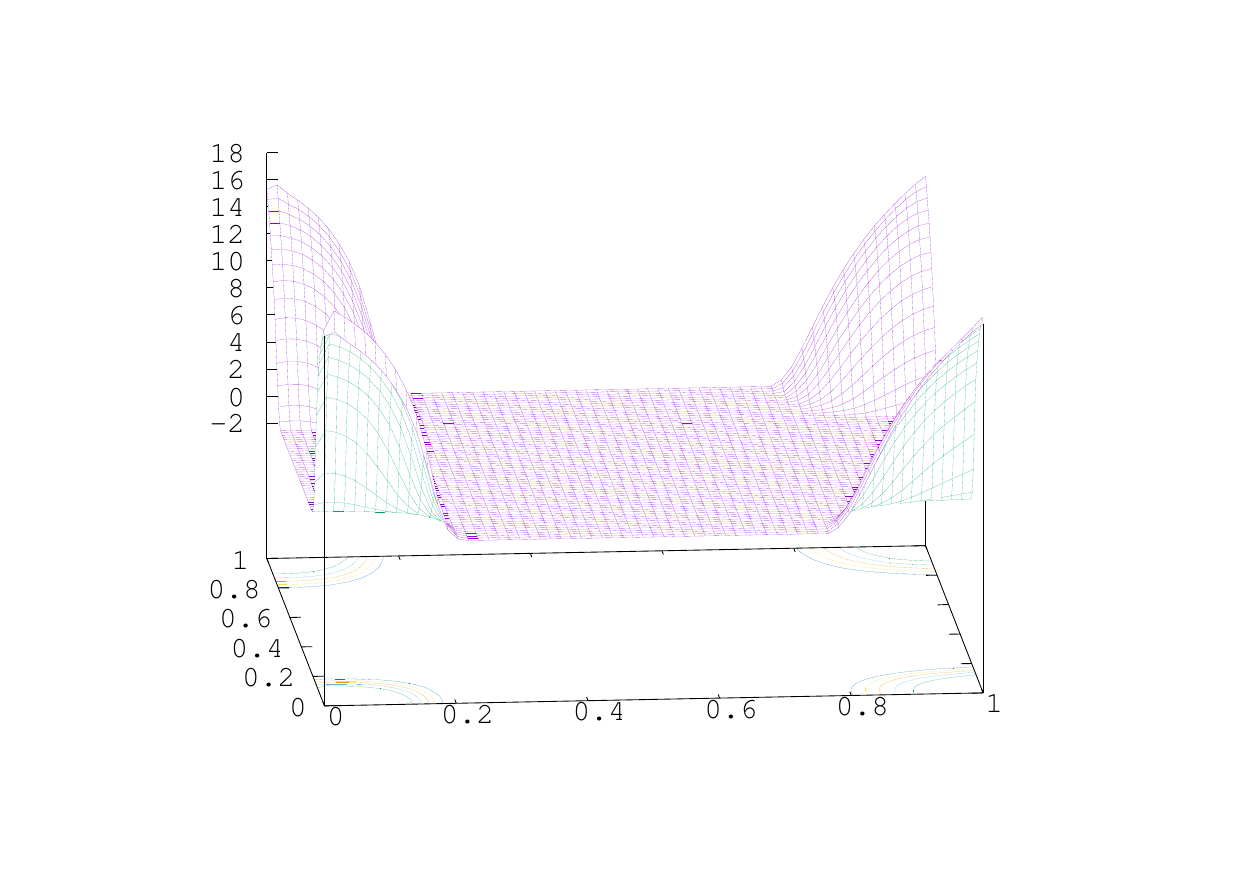}
\end{subfigure}%
\begin{subfigure}{.45\textwidth}
  \centering
  \includegraphics[width=\linewidth]{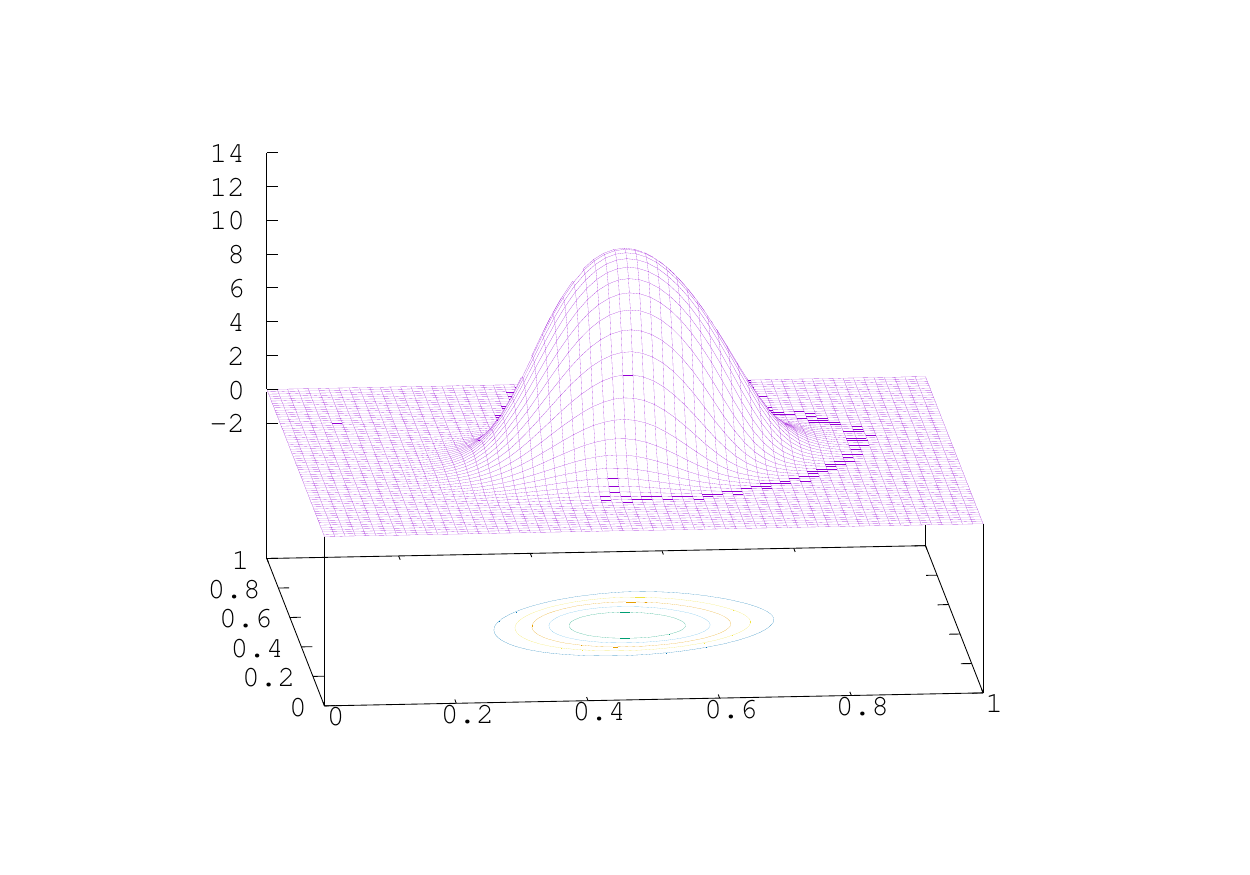}
\end{subfigure}
\vspace{-3em}
\begin{subfigure}{.45\textwidth}
  \centering
  \includegraphics[width=\linewidth]{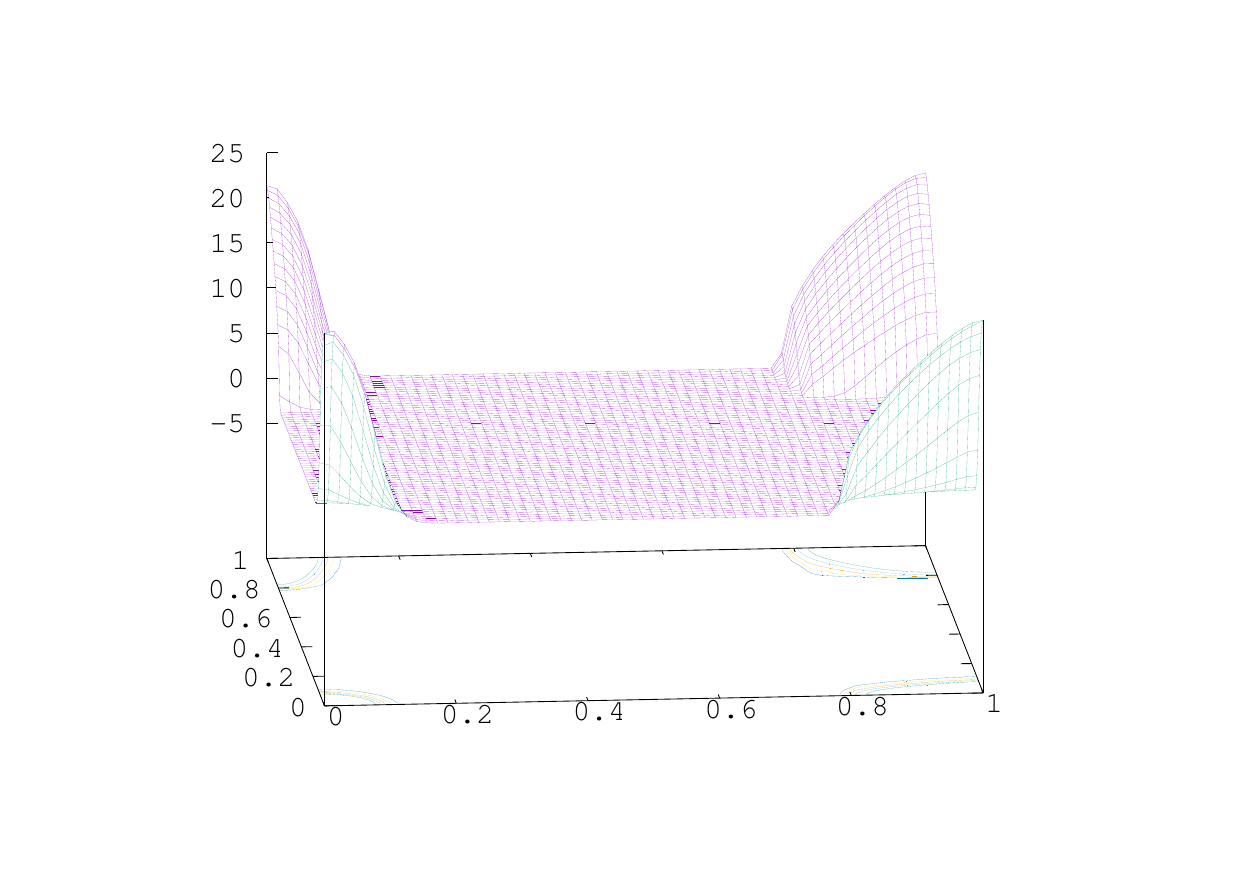}
\end{subfigure}%
\begin{subfigure}{.45\textwidth}
  \centering
  \includegraphics[width=\linewidth]{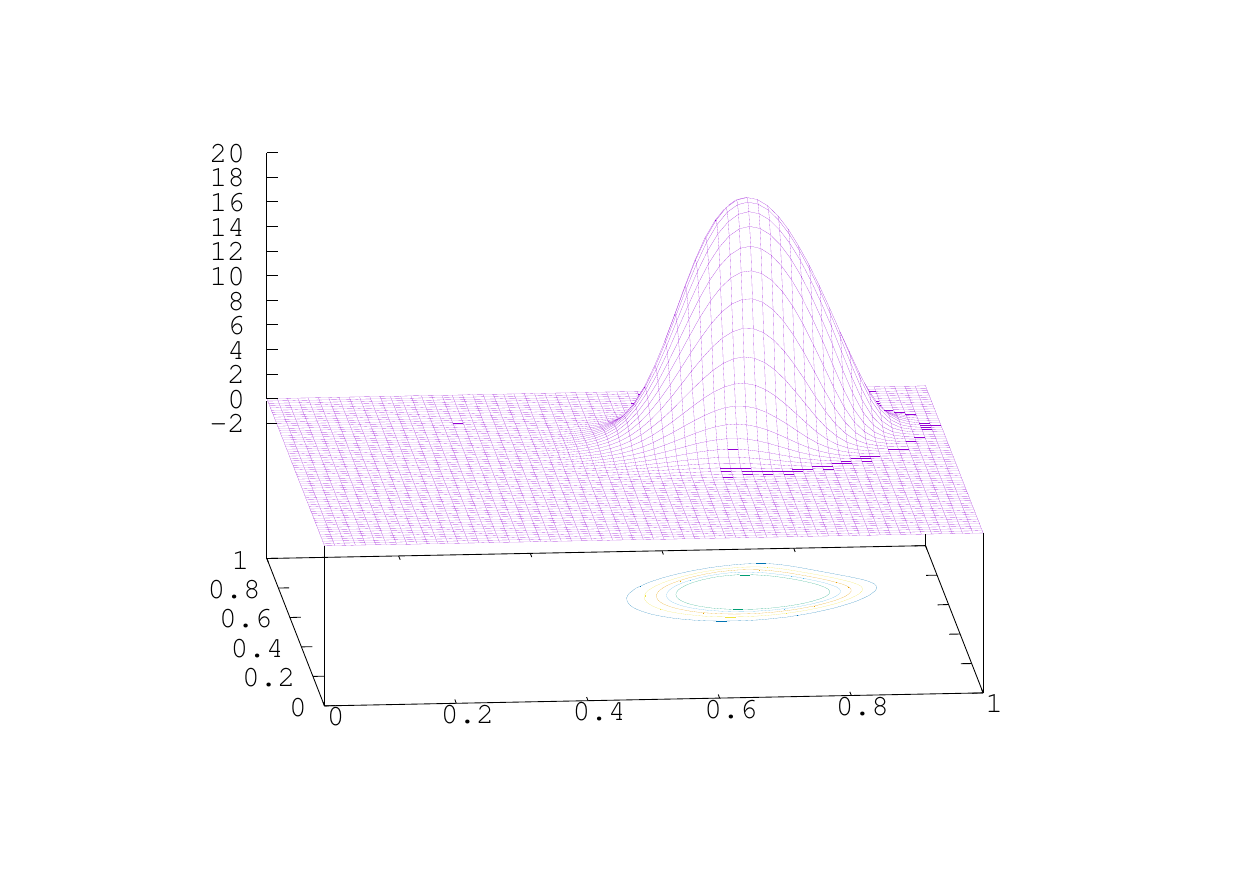}
\end{subfigure}
%
\begin{subfigure}{.45\textwidth}
  \centering
  \includegraphics[width=\linewidth]{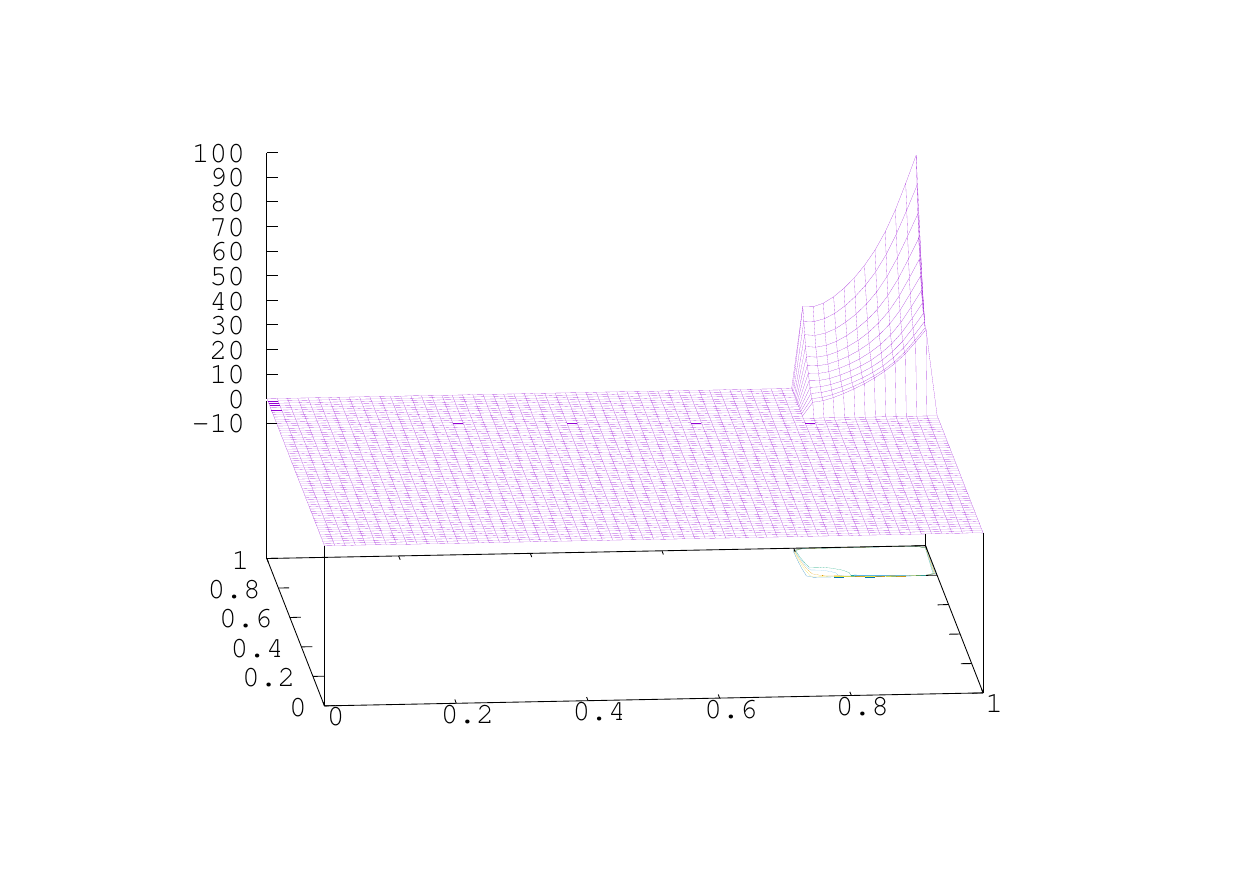}
\end{subfigure}%
\vspace{-3em}
\begin{subfigure}{.45\textwidth}
  \centering
  \includegraphics[width=\linewidth]{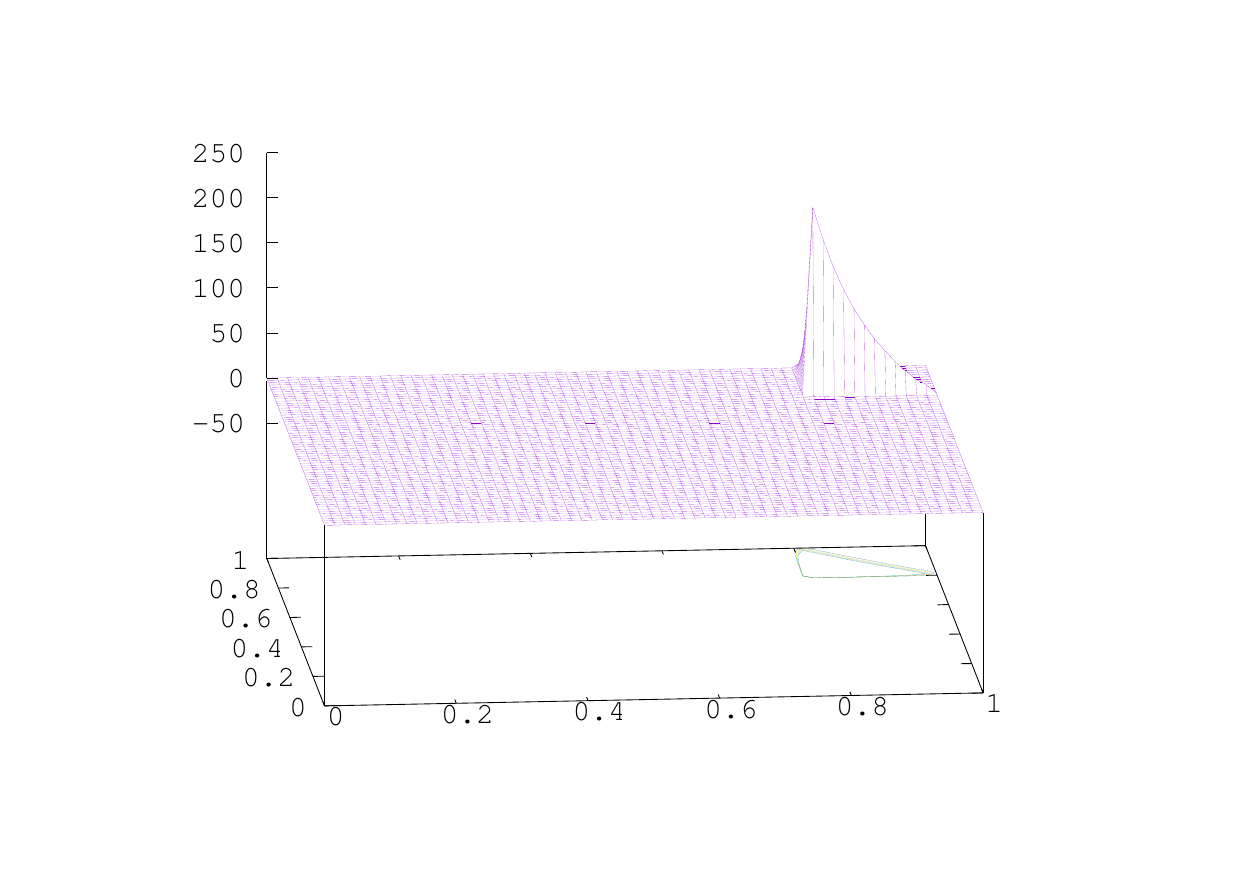}
\end{subfigure}
\caption{Test case 2. Left: periodic case, right: state constrained to   $[0,1]^2$ (with $\alpha = 0.01, \beta = 2, \ell(x,m) = 0.001m$). The rows correspond to $t=0, 1/4, 1/2, 3/4,$ and $T=1$.
 The contours lines correspond to levels  $2,4,6,8,$ and $10$.}
\label{fig:c2c-CompareSC}
\end{figure}

\begin{figure}
\centering
\begin{subfigure}{.45\textwidth}
  \centering
  \includegraphics[width=\linewidth]{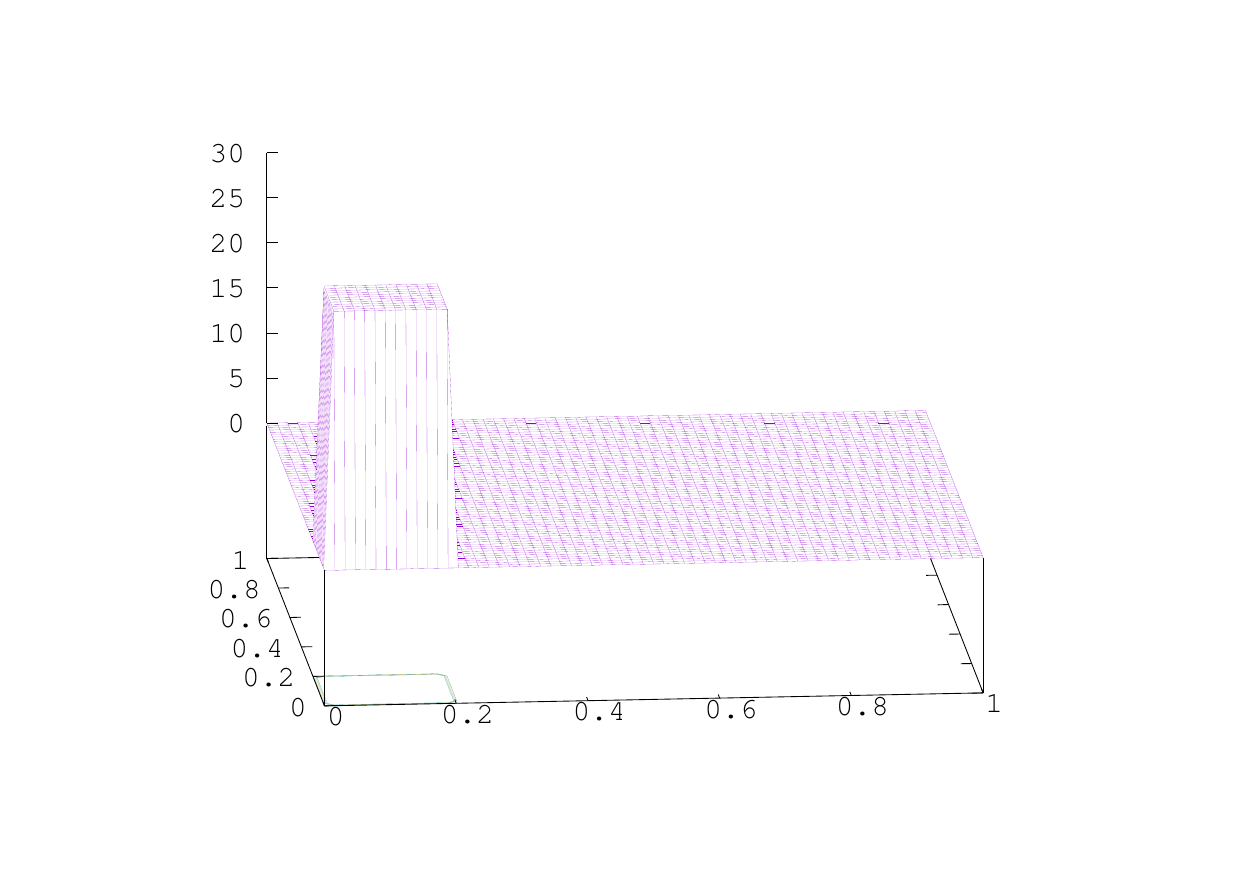}
\end{subfigure}
\vspace{-3em}%
\begin{subfigure}{.45\textwidth}
  \centering
  \includegraphics[width=.75\linewidth]{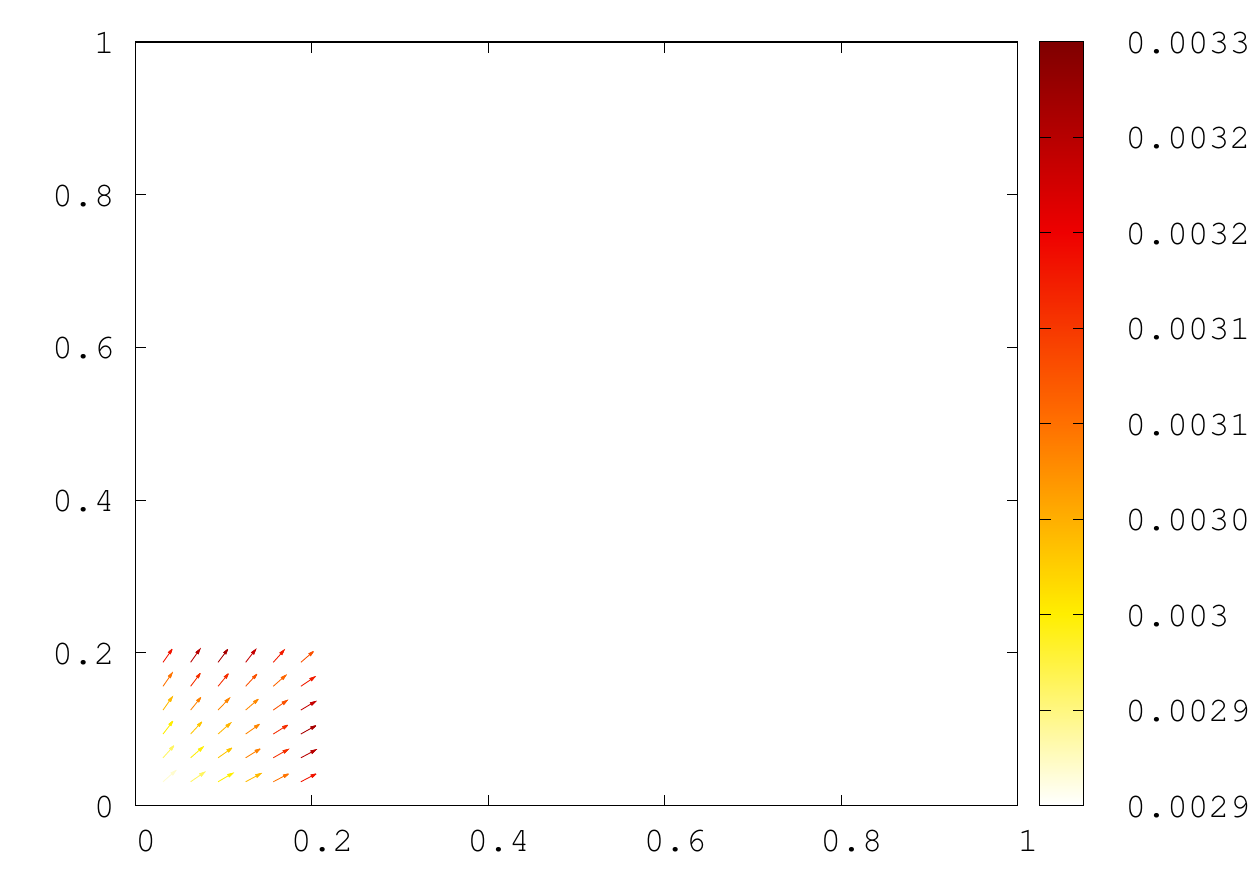}
\end{subfigure}
\vspace{-3em}
\begin{subfigure}{.45\textwidth}
  \centering
  \includegraphics[width=\linewidth]{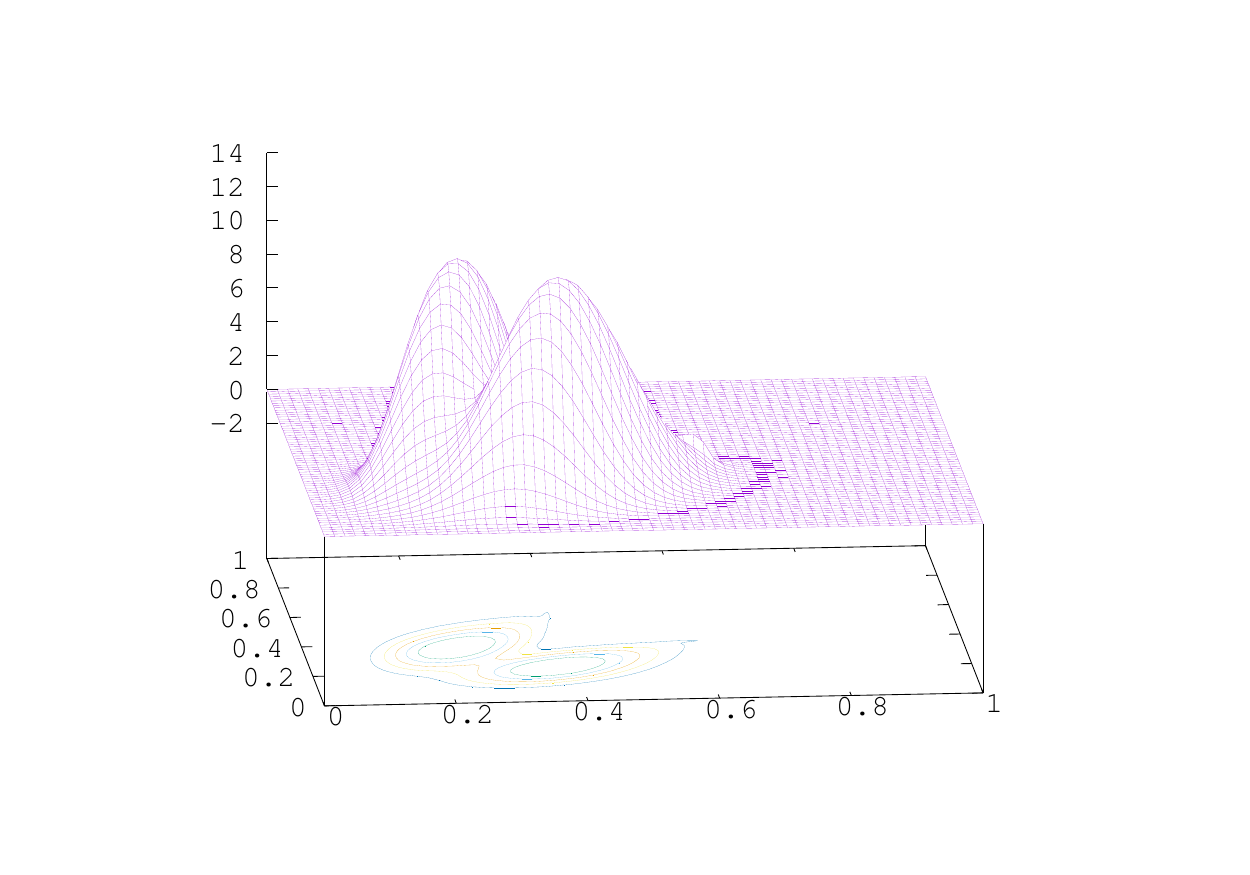}
\end{subfigure}
\begin{subfigure}{.45\textwidth}
  \centering
  \includegraphics[width=.75\linewidth]{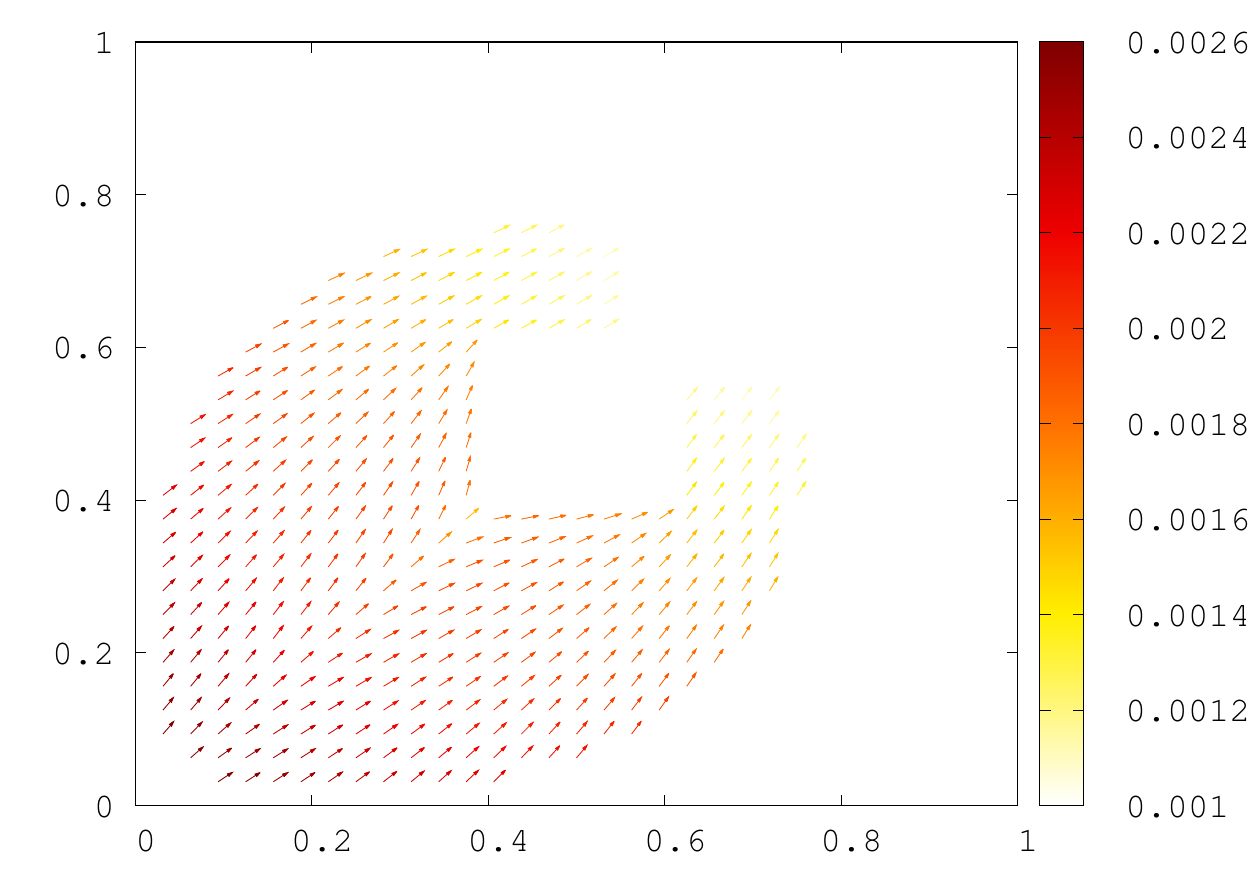}
\end{subfigure}
\vspace{-3em}
\begin{subfigure}{.45\textwidth}
  \centering
  \includegraphics[width=\linewidth]{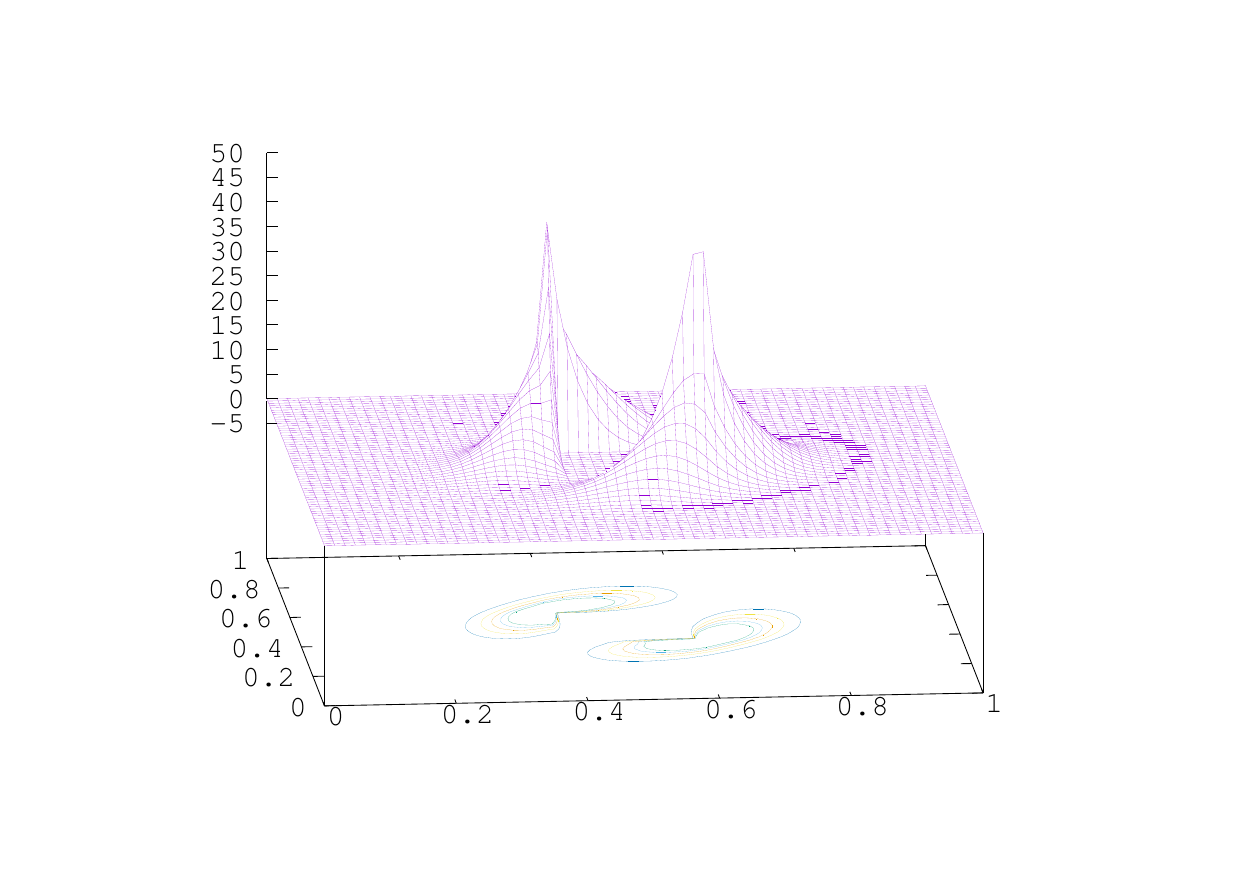}
\end{subfigure}
\begin{subfigure}{.45\textwidth}
  \centering
  \includegraphics[width=.75\linewidth]{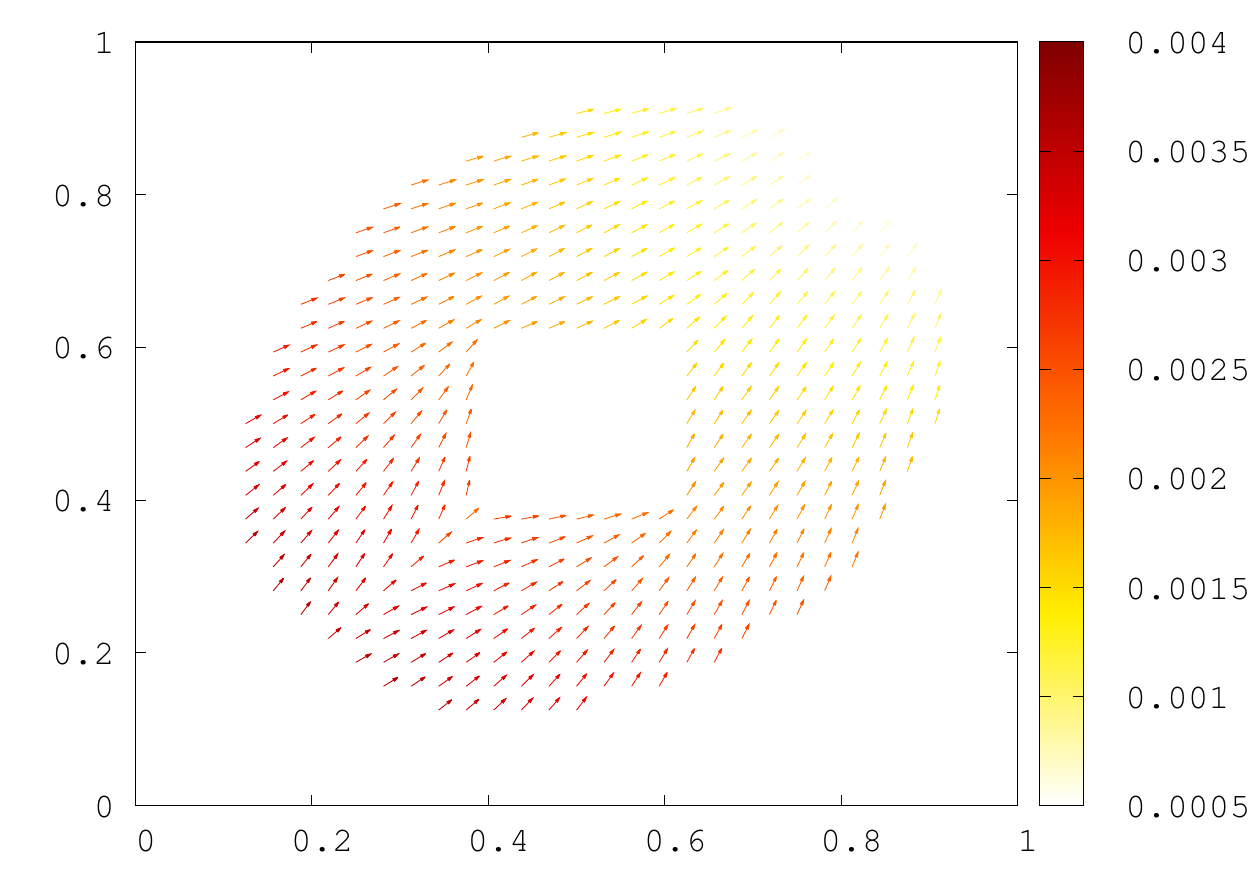}
\end{subfigure}
\vspace{-3em}
\begin{subfigure}{.45\textwidth}
  \centering
  \includegraphics[width=\linewidth]{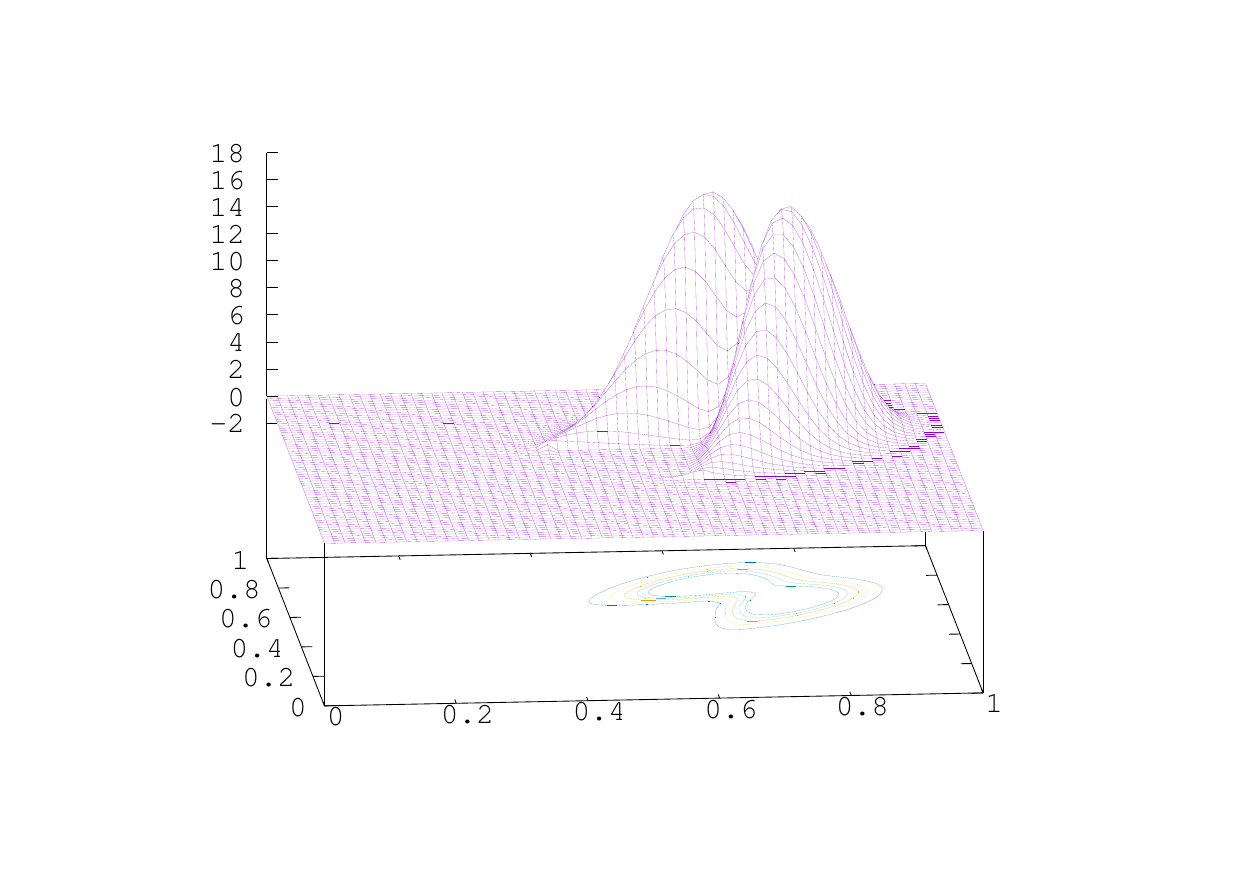}
\end{subfigure}
\begin{subfigure}{.45\textwidth}
  \centering
  \includegraphics[width=.75\linewidth]{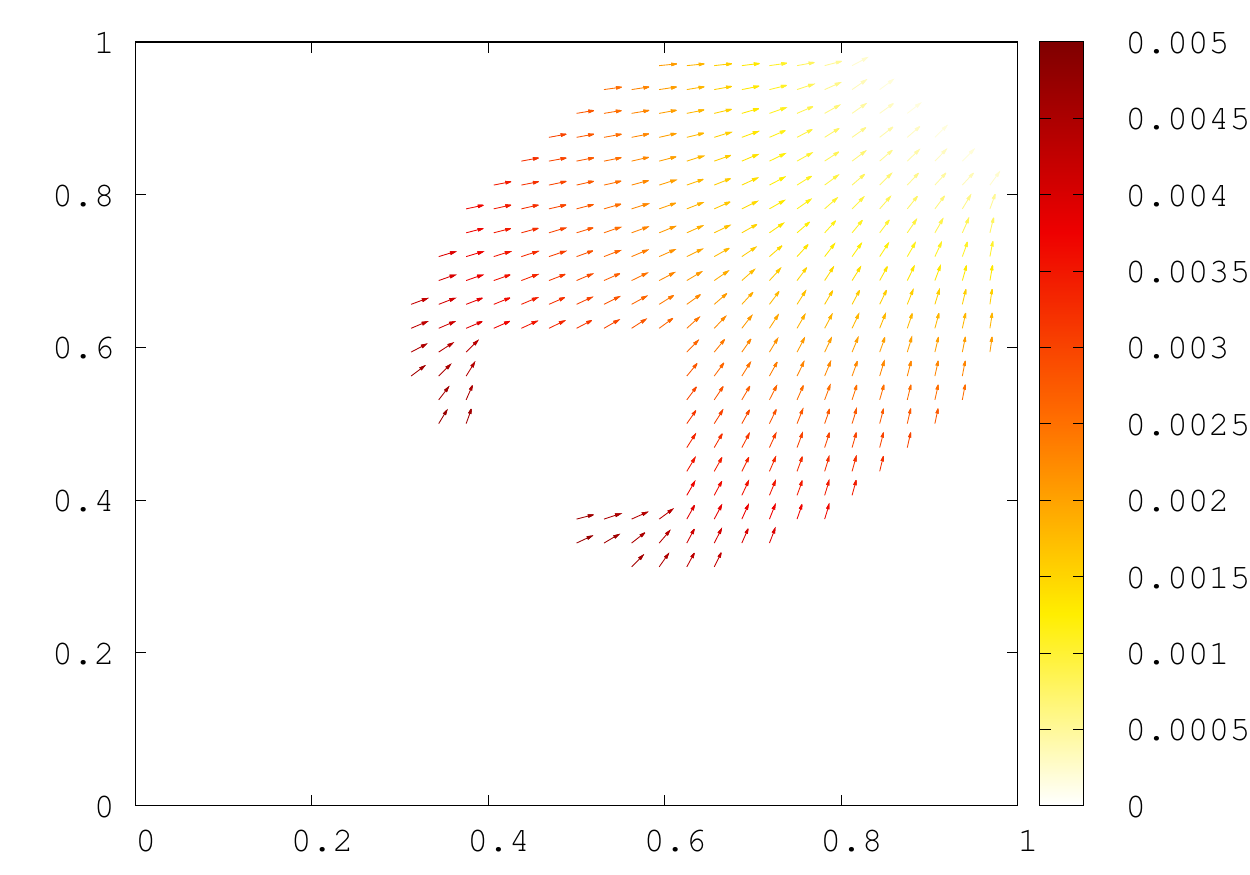}
\end{subfigure}
\begin{subfigure}{.45\textwidth}
  \centering
  \includegraphics[width=\linewidth]{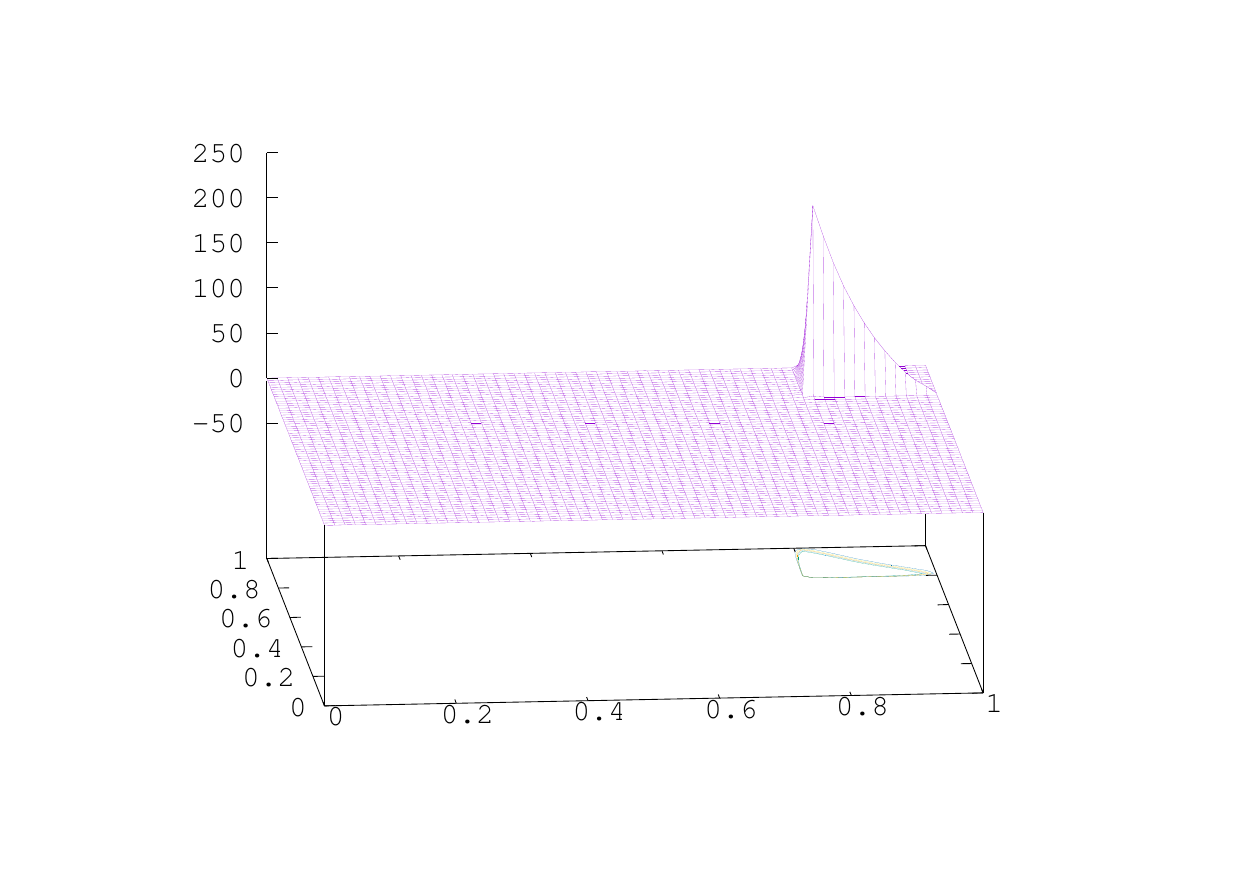}
\end{subfigure}
\begin{subfigure}{.45\textwidth}
  \centering
  \includegraphics[width=.75\linewidth]{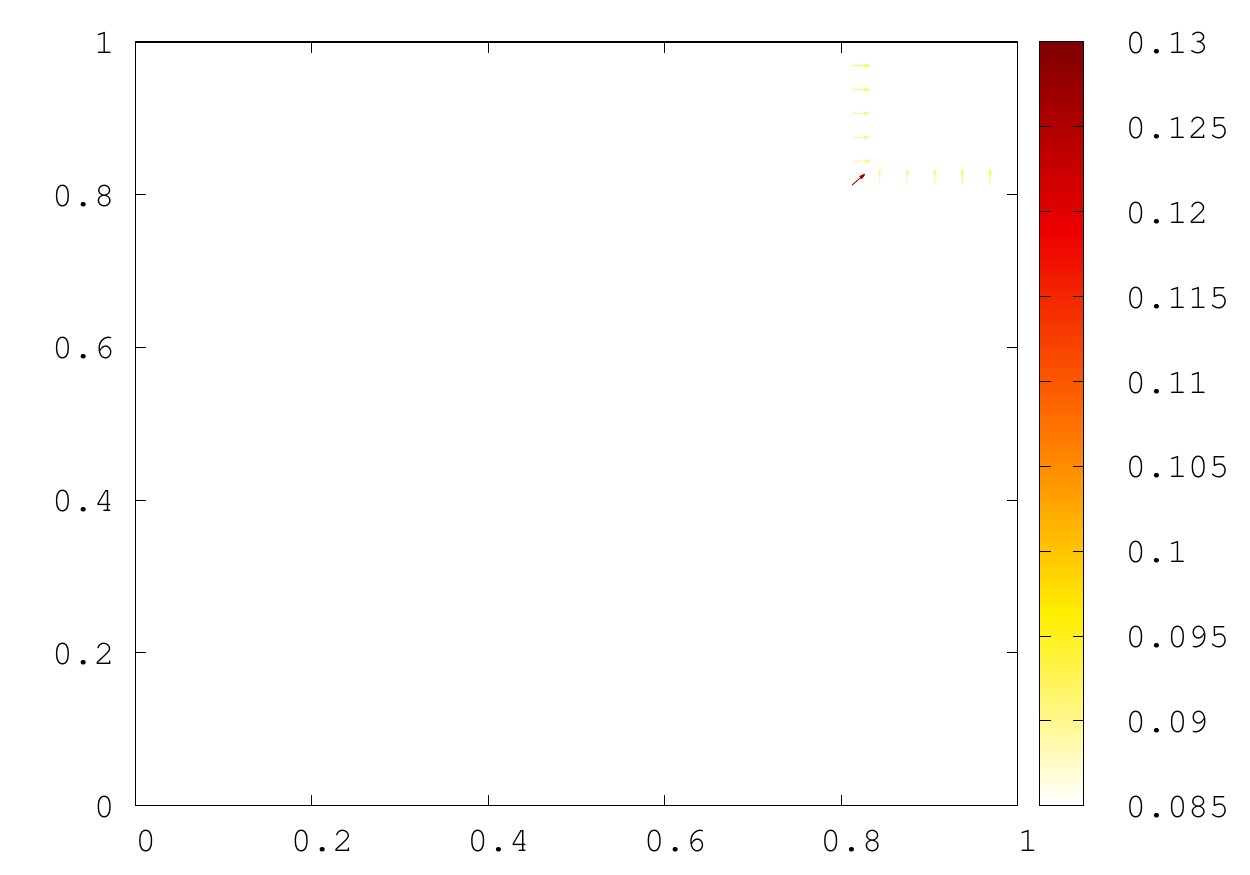}
\end{subfigure}
\caption{Test case 2 with state constrained to   $[0,1]^2 \setminus (0.4,0.6)^2$ ($\alpha = 0.01, \beta = 2, \ell(x,m) = 0.001m$). The rows correspond to $t=0, 1/4, 1/2, 3/4,$ and $T=1$.
Left column: density ( The contours lines correspond to levels  $2,4,6,8,$ and $10$), right column: velocity field.}
\label{fig:c2c-SC-obstacle}
\end{figure}

\subsection{Influence of the parameters}\label{subsec:influenceParams}

\paragraph{Impact of $\alpha$.}

Figure~\ref{fig:diracbosse-CompareAlpha-ell001} shows the evolution of the distribution in Test case 3 
with state constraints,  $\beta = 2, \ell(m) = 0.01m$ and for the exponents $\alpha = 0.3$ and $\alpha = 0.7$. 
For $\alpha=0.3$, the distribution moves faster to the border of the target. Moreover the peak vanishes quickly in this case,
 in comparison with the case $\alpha = 0.7$.  This can be explained by the fact that a smaller value of $\alpha$   makes motion less expensive in congested zones.
\begin{figure}
\centering
\begin{subfigure}{.45\textwidth}
  \centering
  \includegraphics[width=\linewidth]{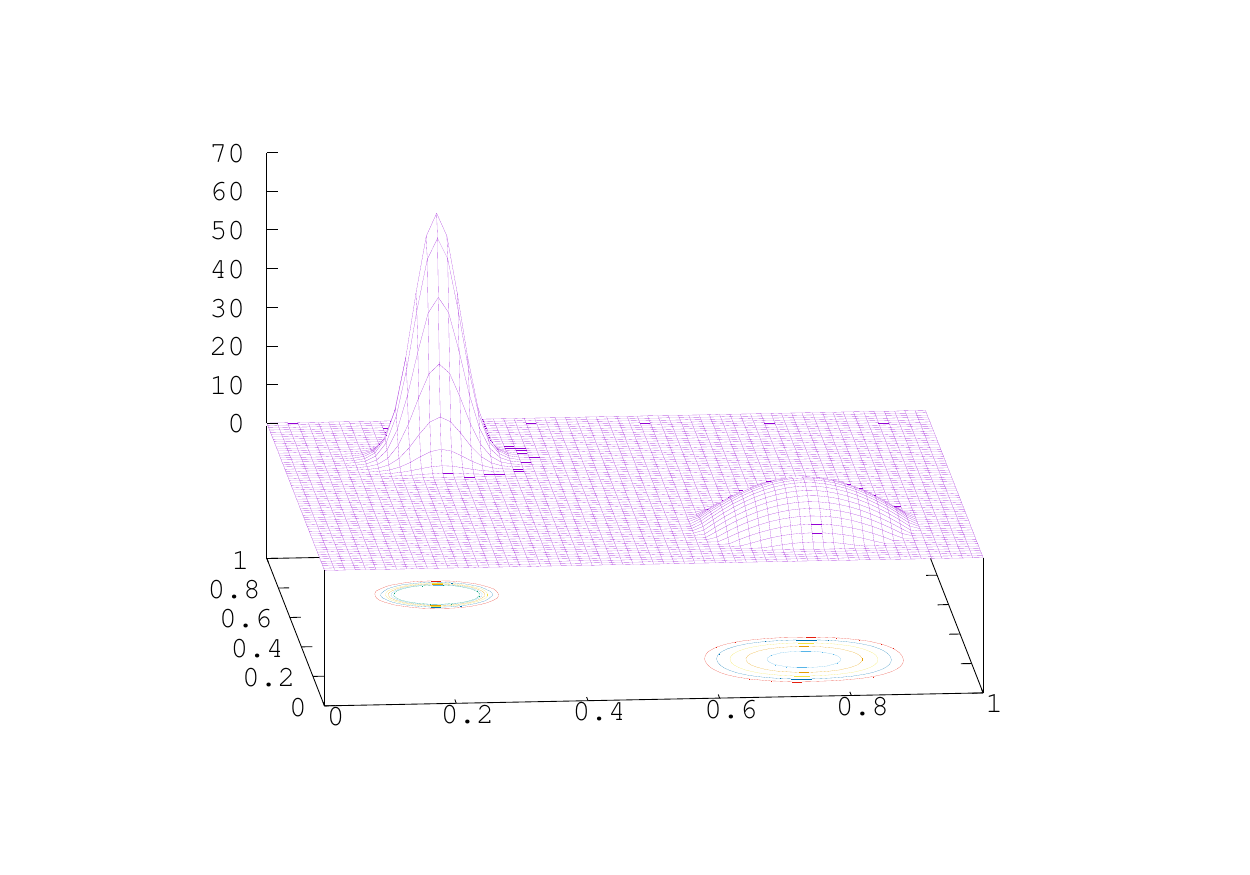}
\end{subfigure}%
\vspace{-3em}
\begin{subfigure}{.45\textwidth}
  \centering
  \includegraphics[width=\linewidth]{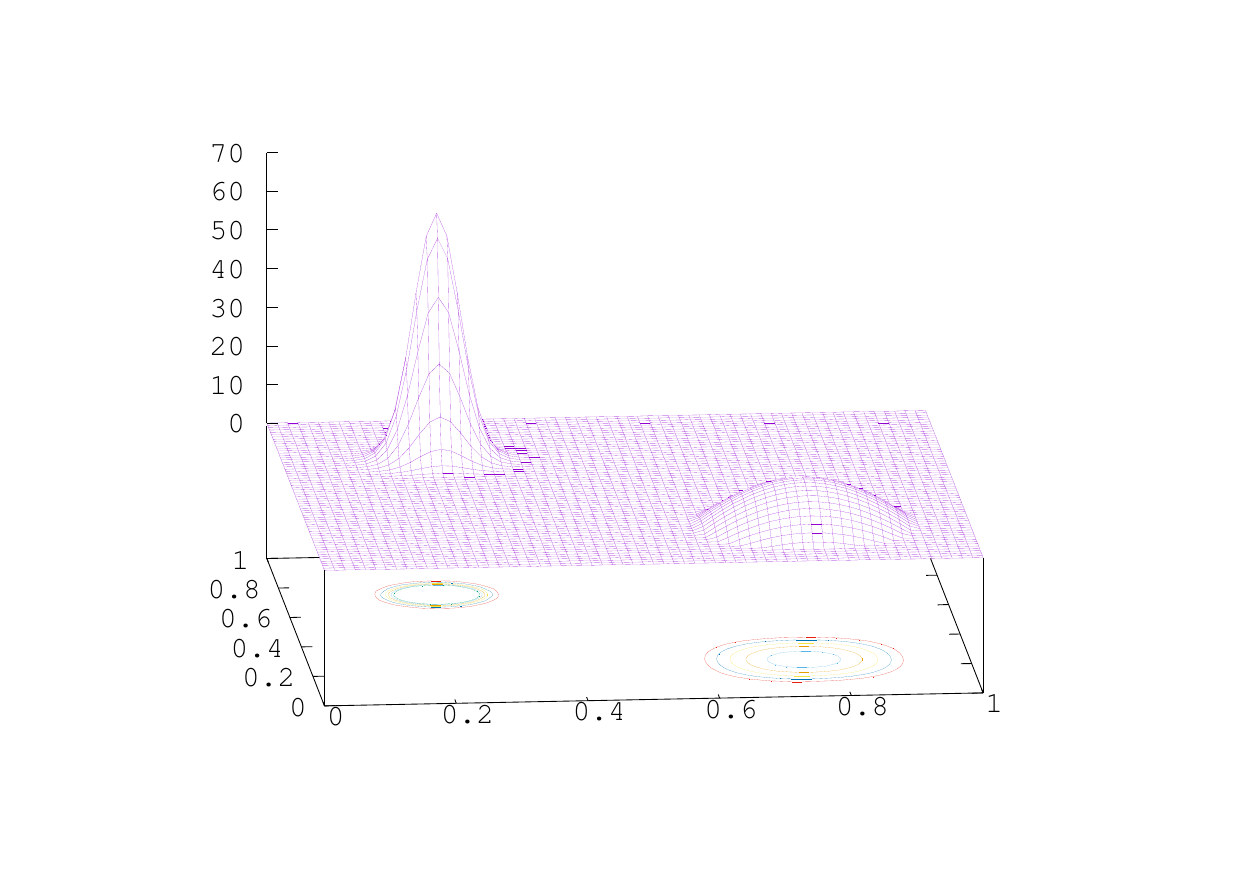}
\end{subfigure}
\vspace{-3em}
\begin{subfigure}{.45\textwidth}
  \centering
  \includegraphics[width=\linewidth]{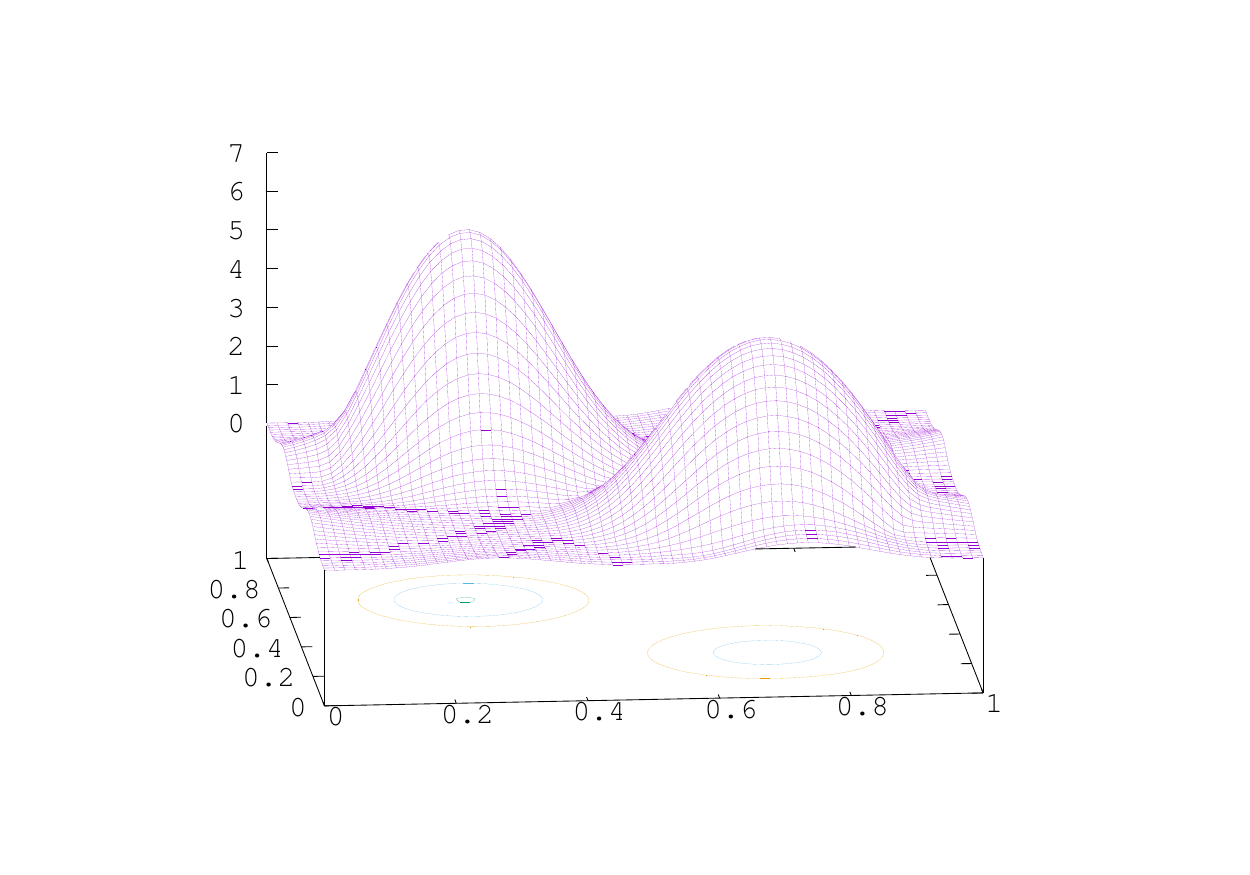}
\end{subfigure}%
\begin{subfigure}{.45\textwidth}
  \centering
  \includegraphics[width=\linewidth]{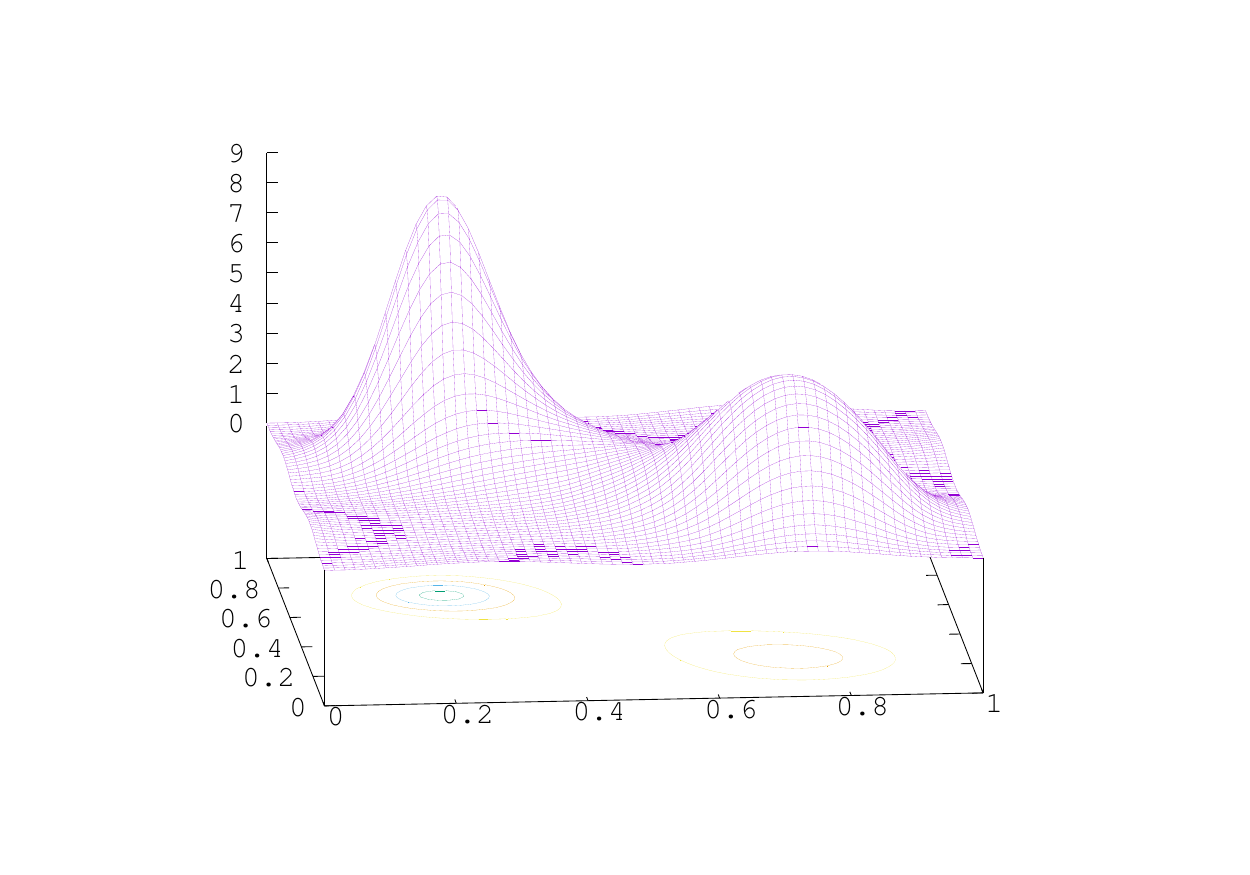}
\end{subfigure}
\vspace{-3em}
\begin{subfigure}{.45\textwidth}
  \centering
  \includegraphics[width=\linewidth]{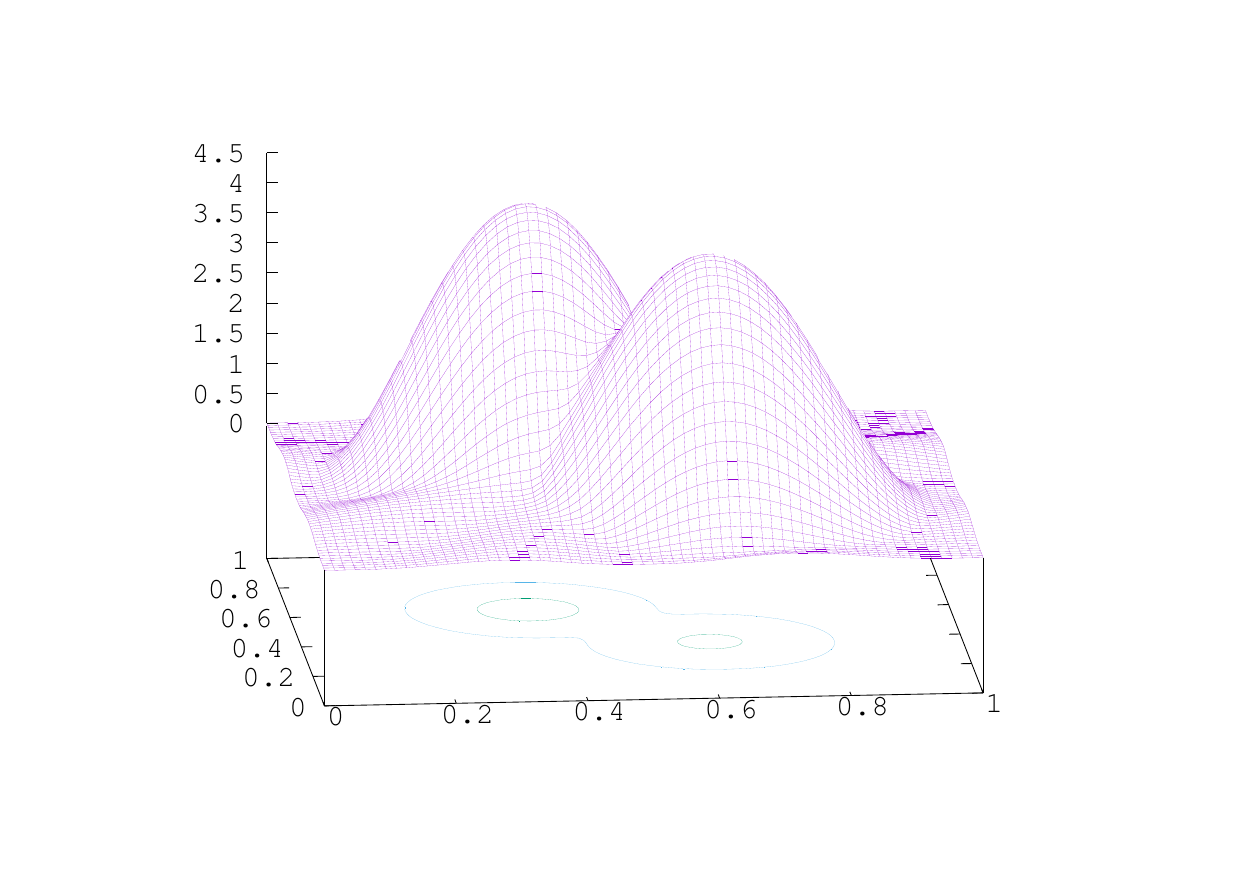}
\end{subfigure}%
\begin{subfigure}{.45\textwidth}
  \centering
  \includegraphics[width=\linewidth]{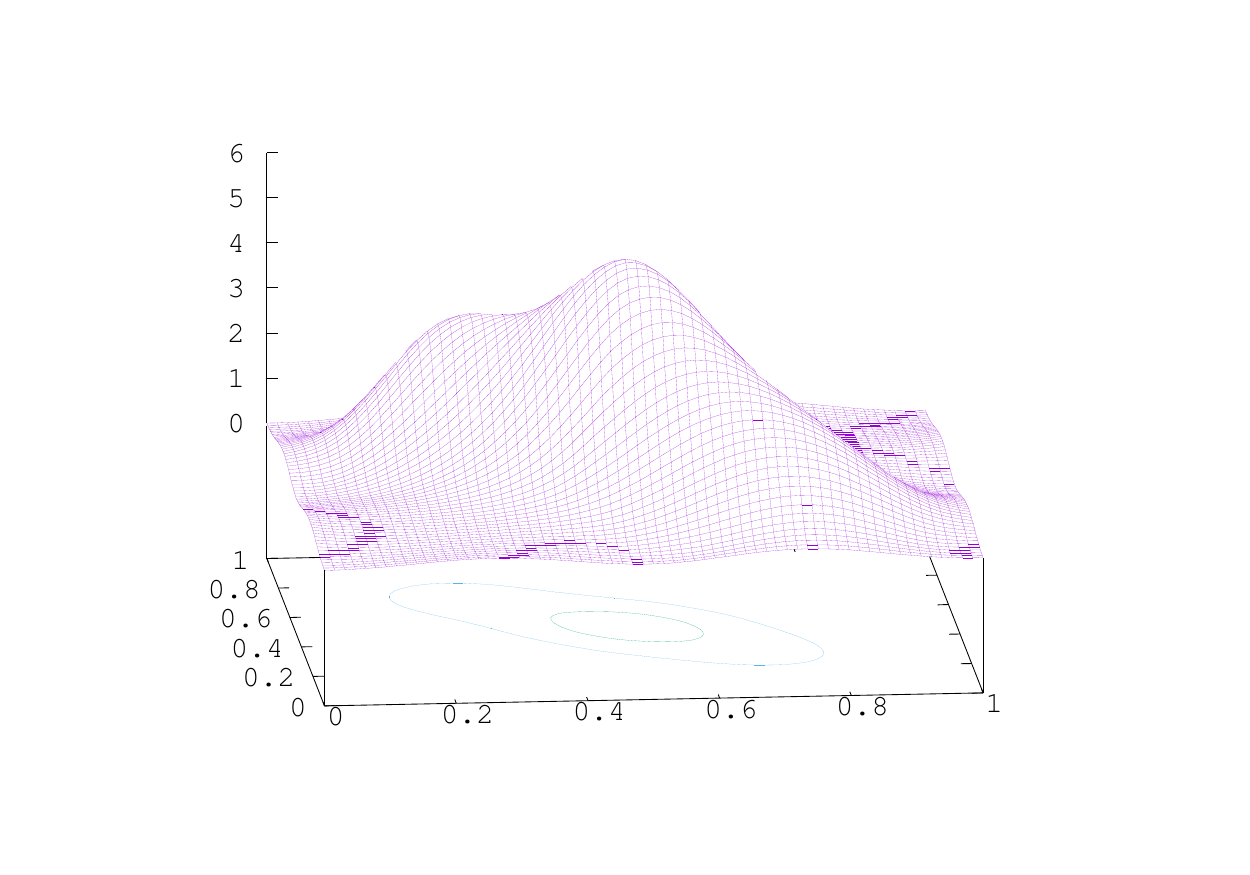}
\end{subfigure}
\vspace{-3em}
\begin{subfigure}{.45\textwidth}
  \centering
  \includegraphics[width=\linewidth]{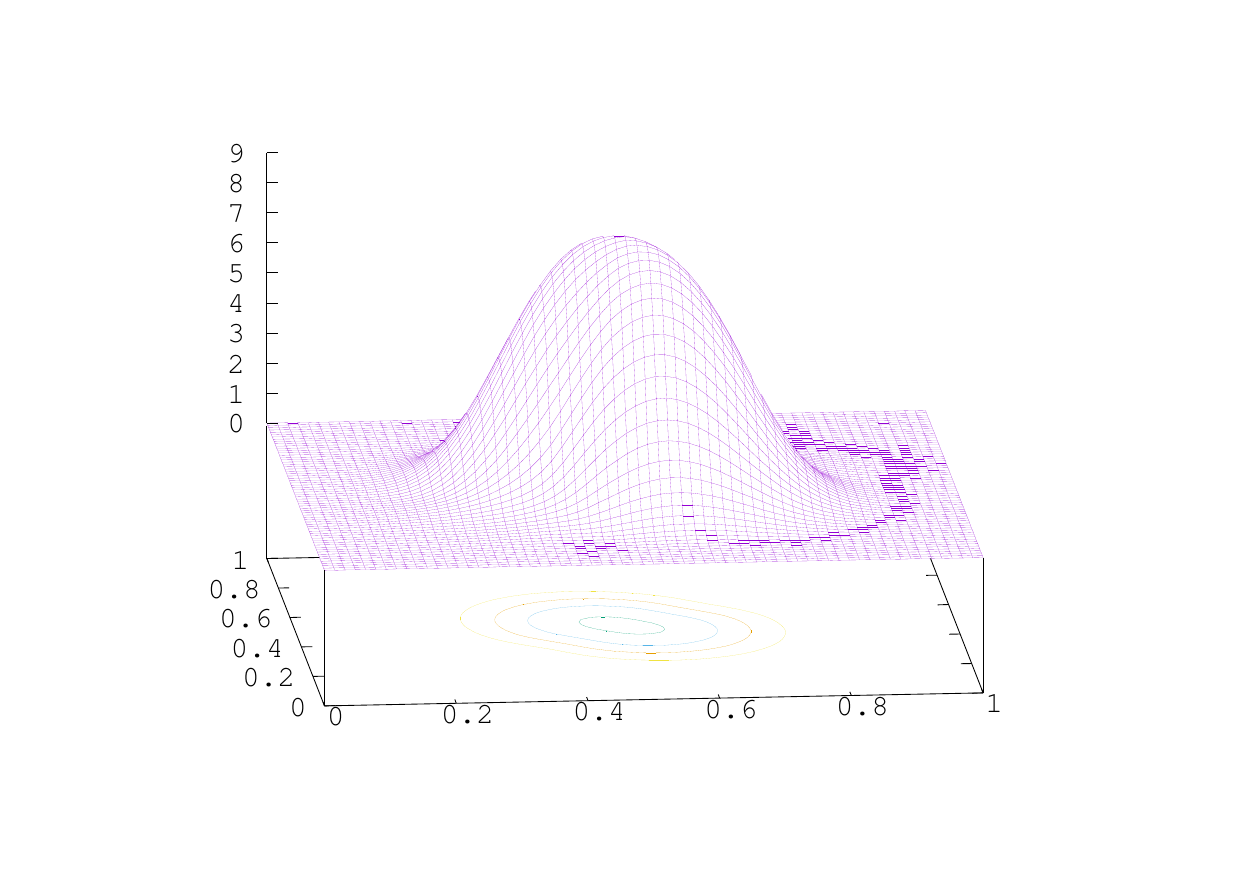}
\end{subfigure}%
\begin{subfigure}{.45\textwidth}
  \centering
  \includegraphics[width=\linewidth]{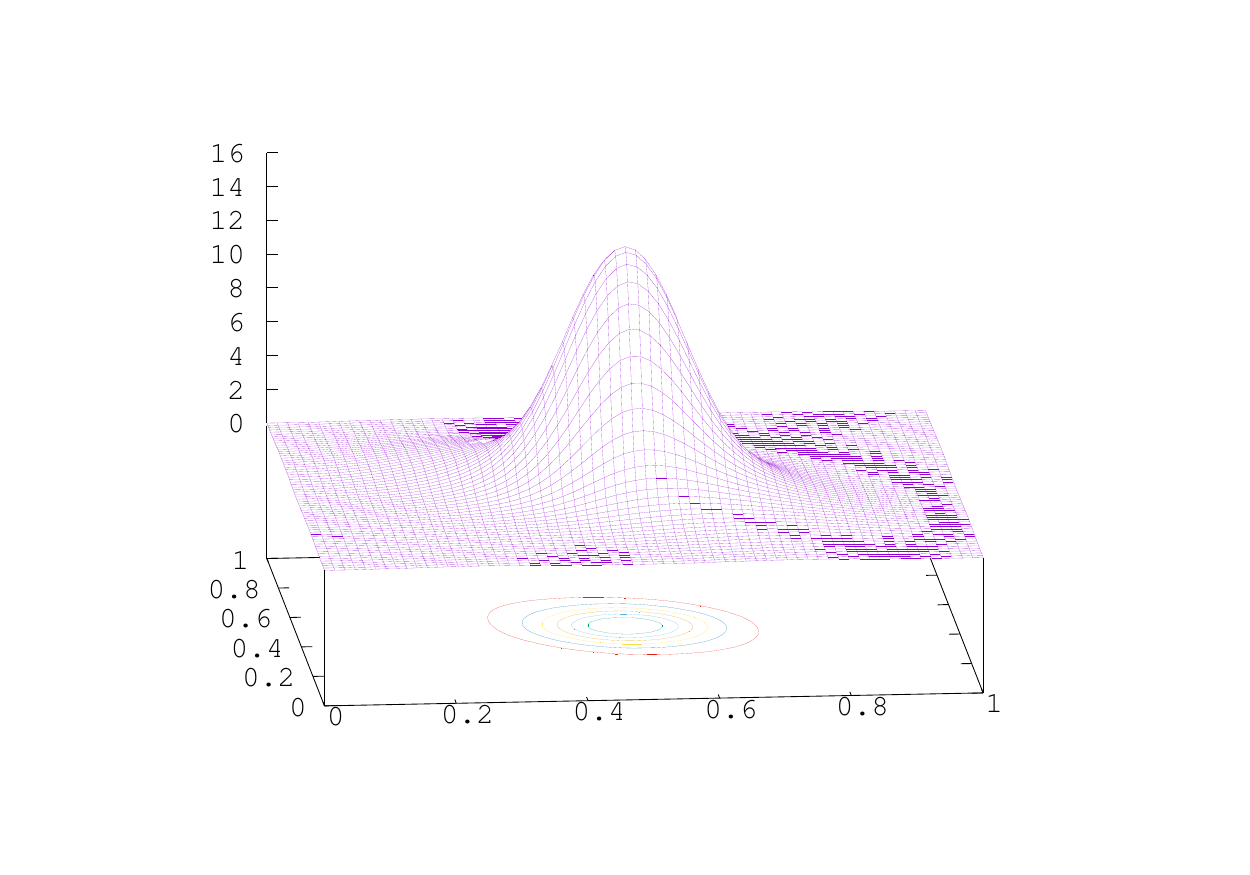}
\end{subfigure}
\begin{subfigure}{.45\textwidth}
  \centering
  \includegraphics[width=\linewidth]{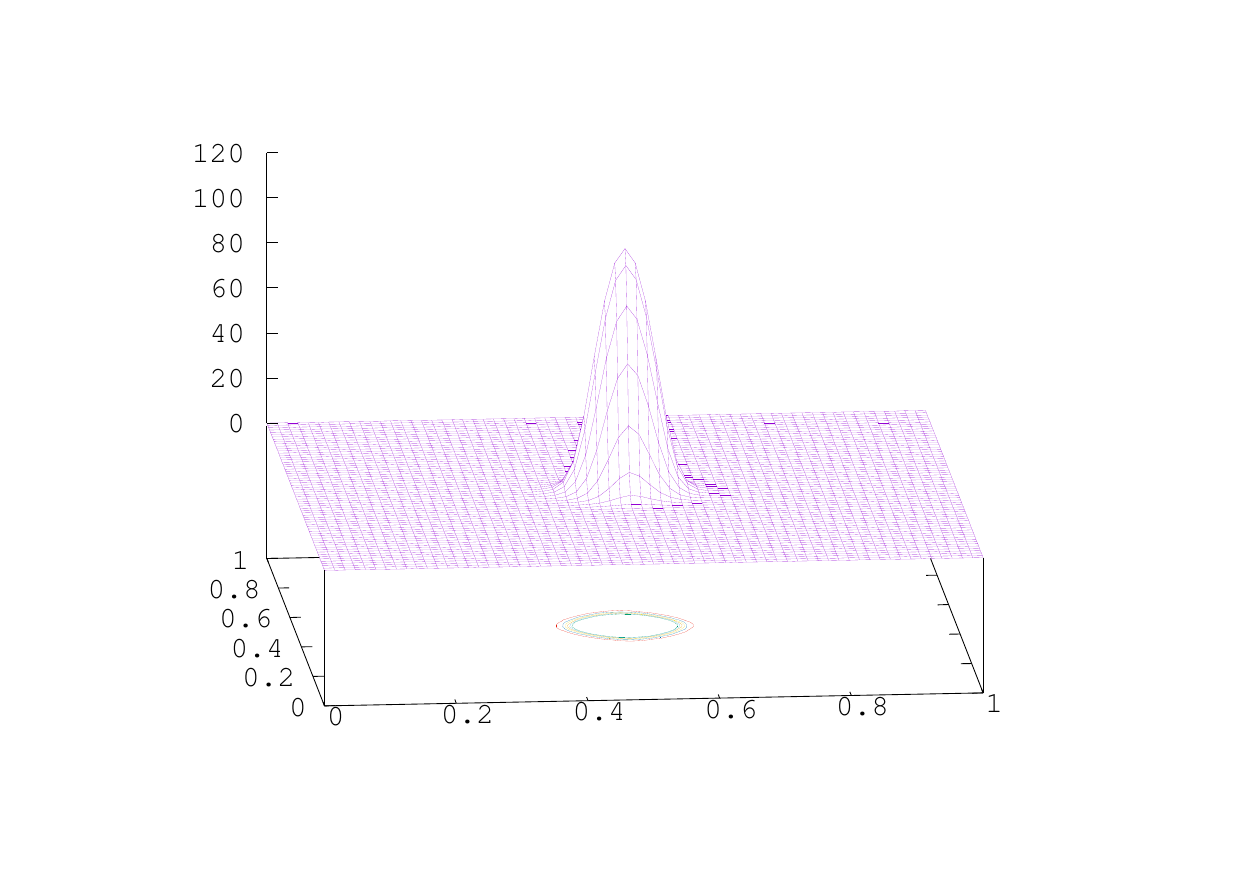}
\end{subfigure}%
\begin{subfigure}{.45\textwidth}
  \centering
  \includegraphics[width=\linewidth]{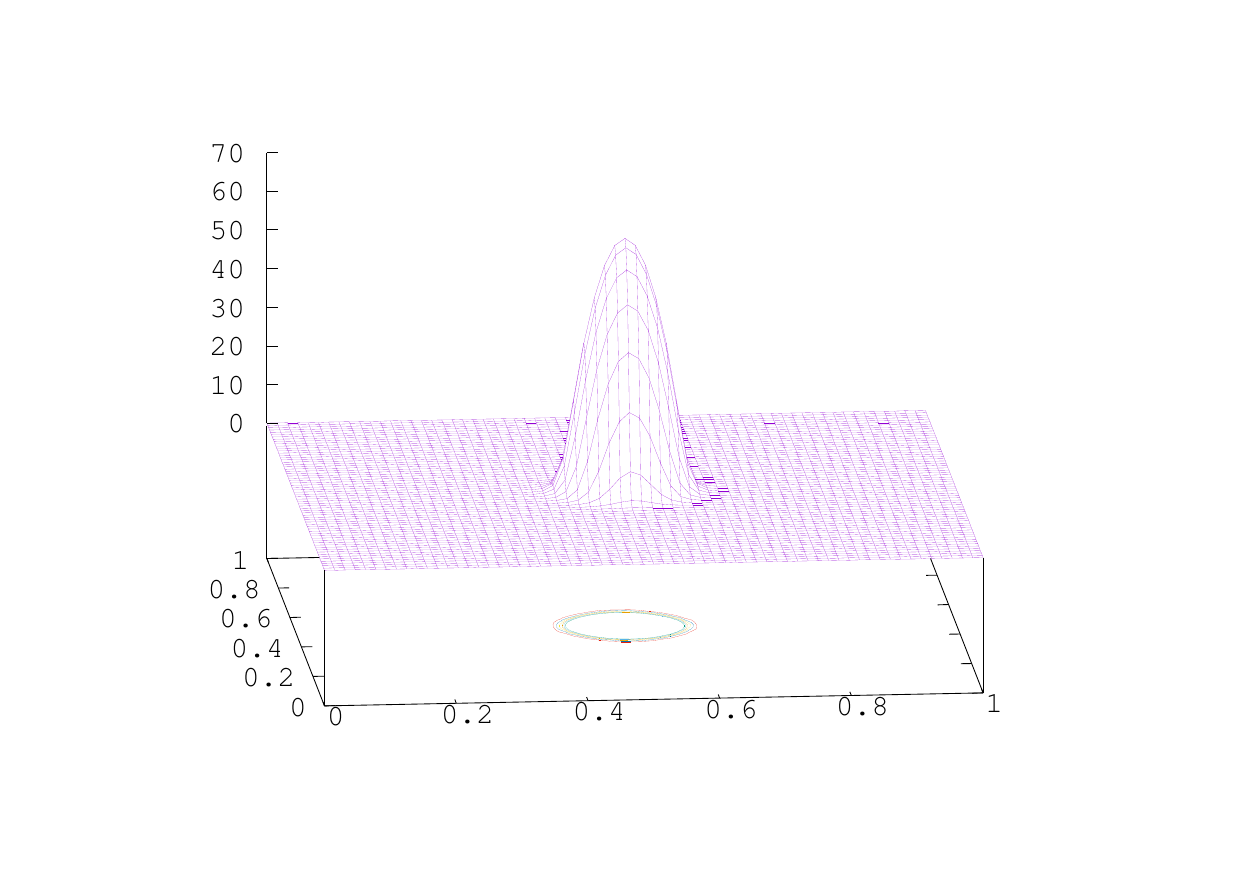}
\end{subfigure}
\caption{Influence of $\alpha$  in Test case 3. Left: $\alpha  = 0.3$, right: $\alpha =  0.7$ (and $\beta = 2, \ell(m) = 0.01m$). The rows correspond to $t=0, 1/4, 1/2, 3/4,$ and $T=1$.  The contours lines correspond to levels  $2,4, 6, 8, 10$ and $12$.}
\label{fig:diracbosse-CompareAlpha-ell001}
\end{figure}

\paragraph{Impact of $\ell$.}

We first investigate  the influence of $\ell$ in Test case 3 with  state constraints, see Figure~\ref{fig:diracbosse-ell-sc}.
We compare the evolution of the distribution with  $\ell(x,m) = \lambda m$ for $\lambda = 0.01$ and $\lambda = 0.05$. 
The larger $\lambda$ is, the less tolerant the agent are to high densities.
 We see that the distribution evolves to humps localized near the center of the domain, but that the hump is more peaky when $\ell(x,m)=0.01m$.  Note also  that especially when $\ell=0.01m$ and due to congestion effects, the part of the population initially very concentrated near $x=(0.2,0.8)$ takes more time to reach the center of the domain (and that the shape of the hump varies in time due to congestion effects).
\\ Note also that  there are regions where the density remains $0$.  These empty regions are well dealt with by the present numerical method.  \\
 The influence of $\ell$ can also be seen in Test case 2 with an obstacle, see Figure~\ref{c2c-ell-sc}. In this case, similar observations can be made.

\begin{figure}
\centering
\begin{subfigure}{.4\textwidth}
  \centering
  \includegraphics[width=\linewidth]{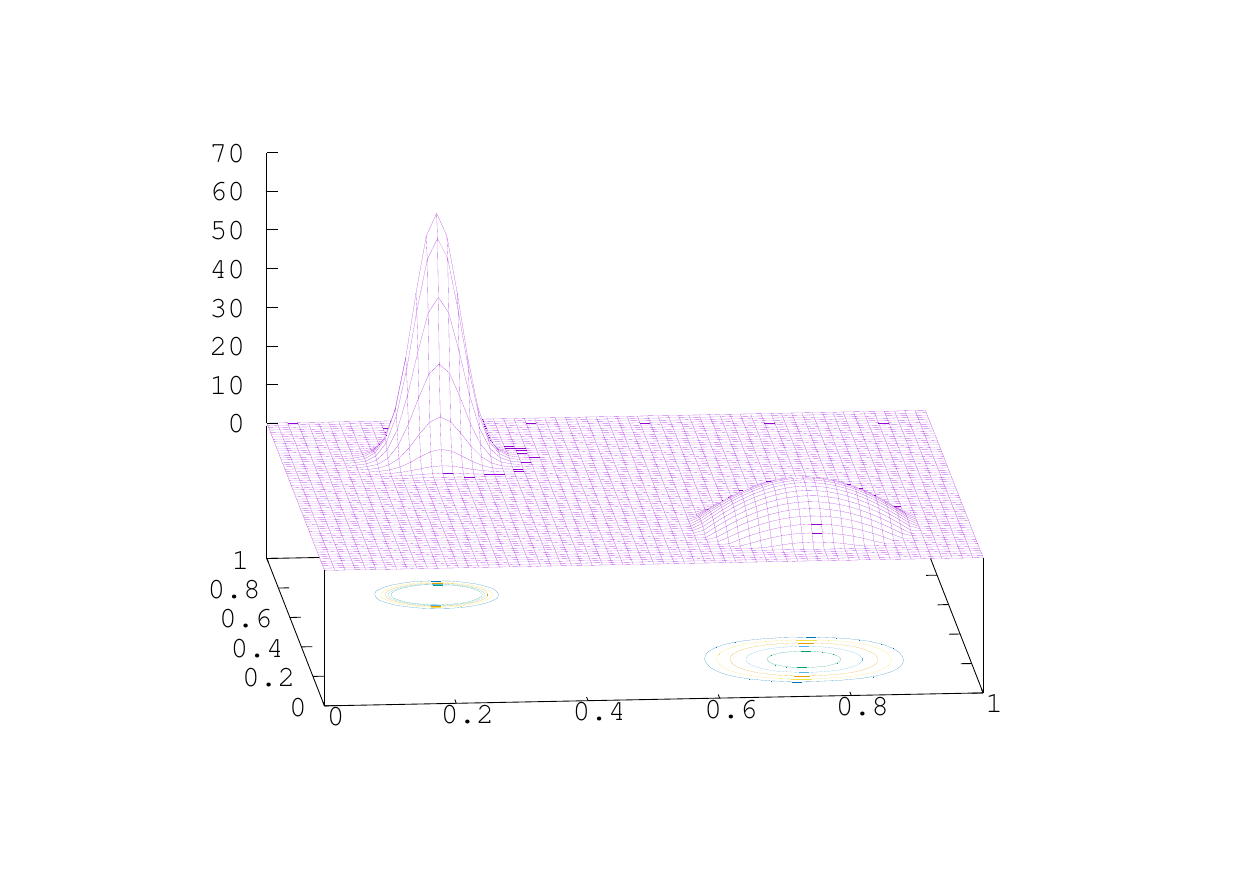}
\end{subfigure}%
\vspace{-3em}%
\begin{subfigure}{.4\textwidth}
  \centering
  \includegraphics[width=\linewidth]{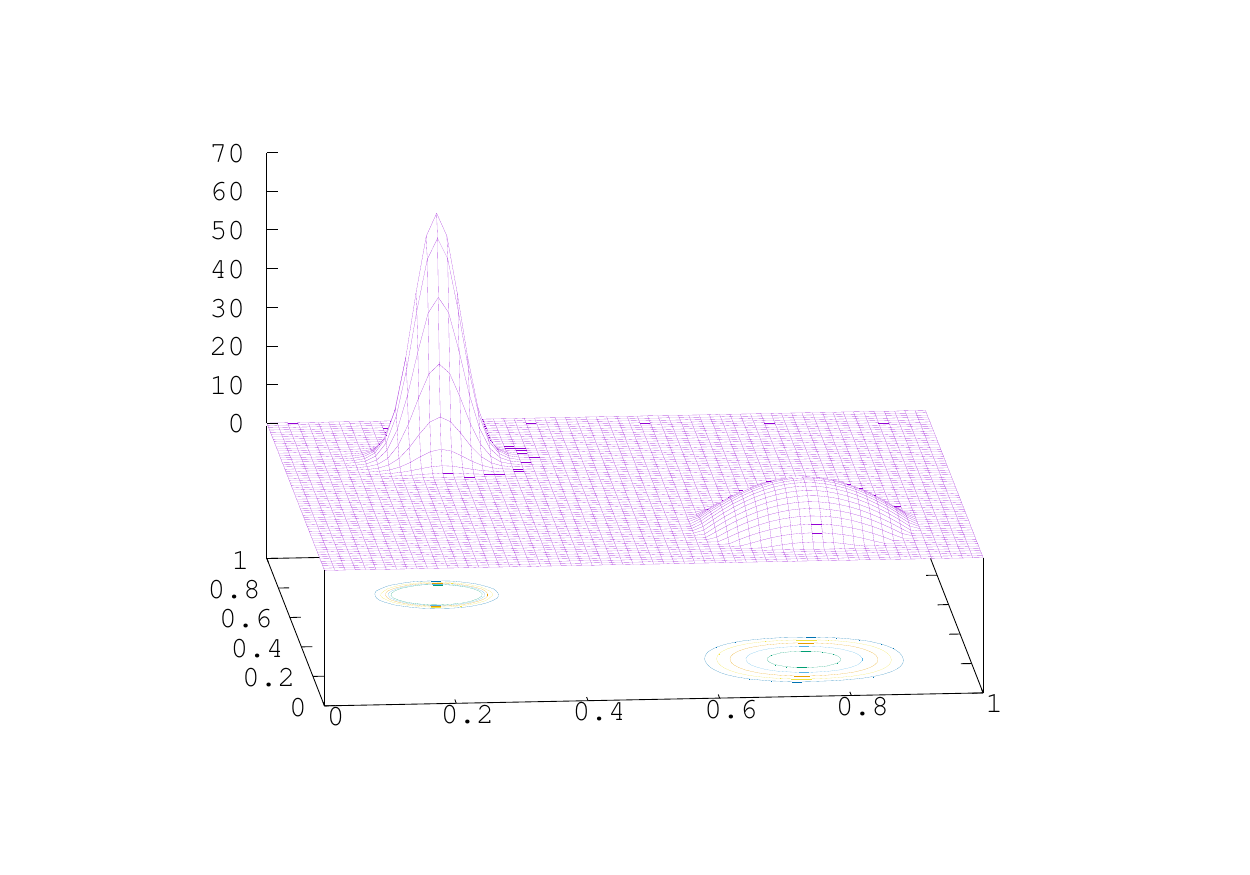}
\end{subfigure}
\begin{subfigure}{.4\textwidth}
  \centering
  \includegraphics[width=\linewidth]{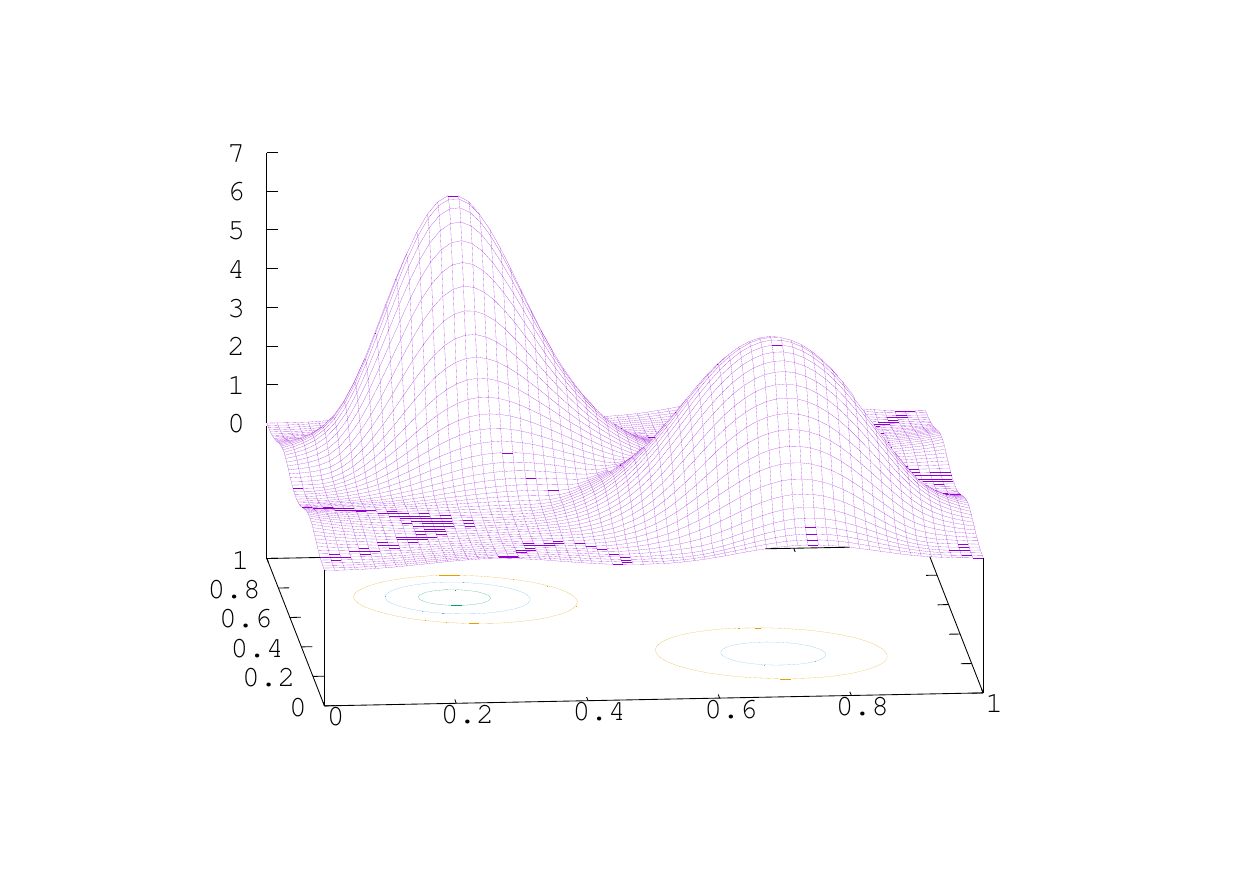}
\end{subfigure}%
\begin{subfigure}{.4\textwidth}
  \centering
  \includegraphics[width=\linewidth]{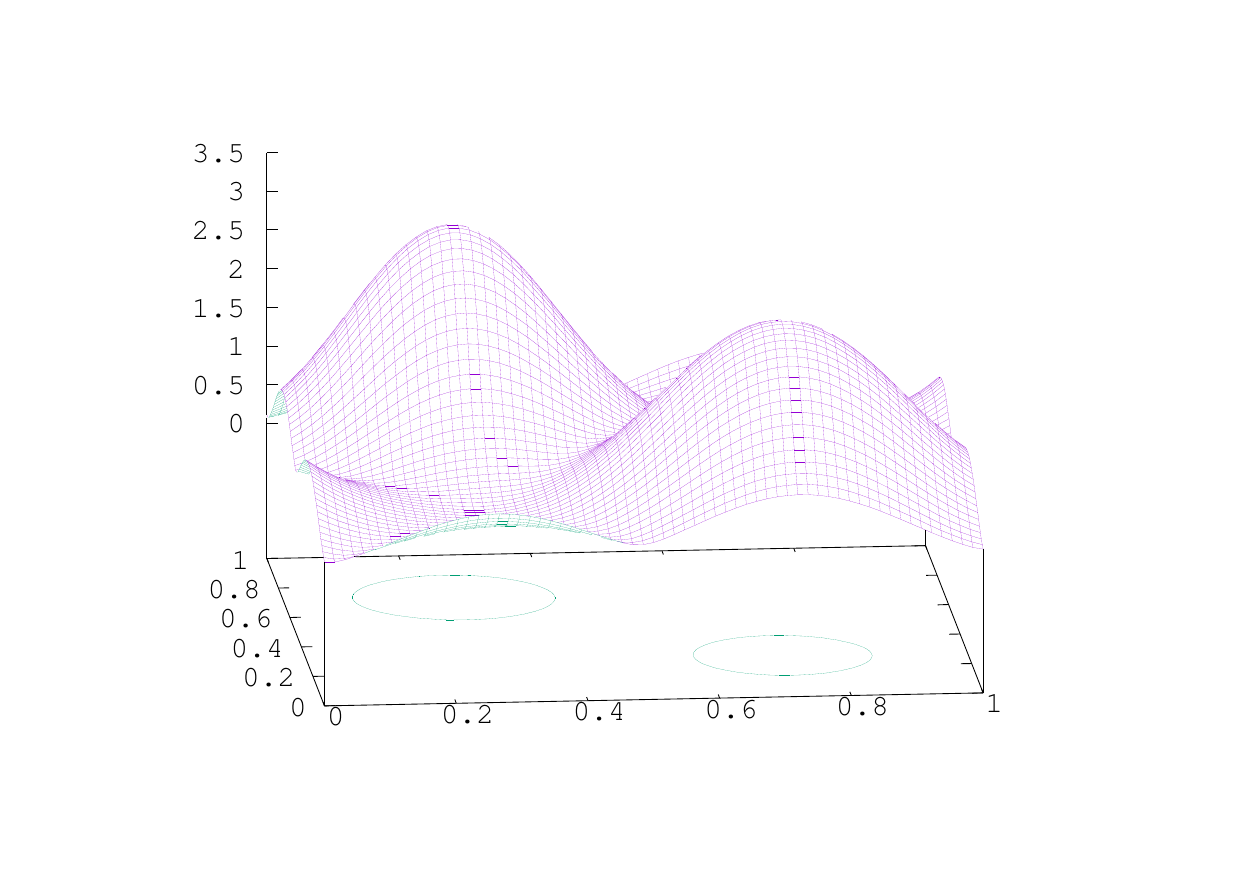}
\end{subfigure}
\begin{subfigure}{.4\textwidth}
  \centering
  \includegraphics[width=\linewidth]{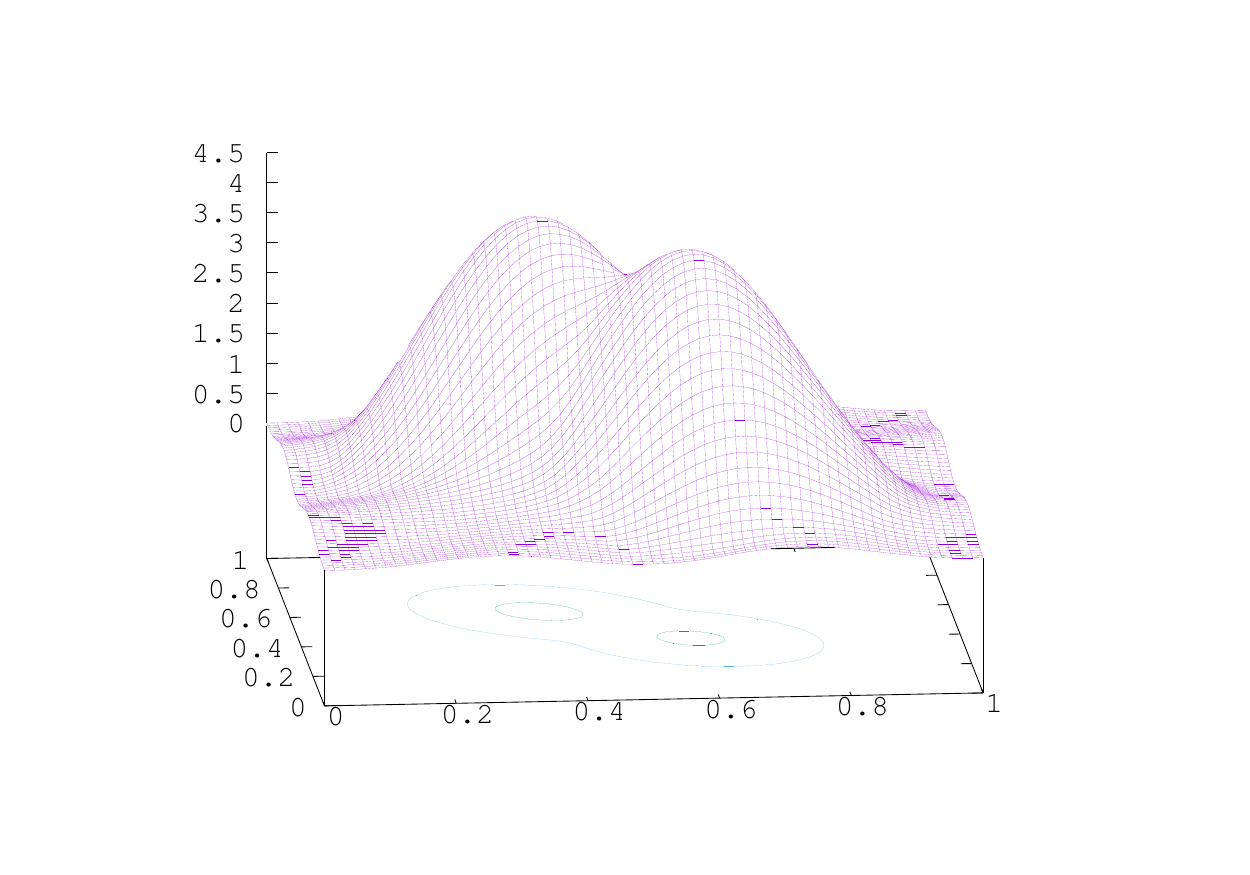}
\end{subfigure}%
\begin{subfigure}{.4\textwidth}
  \centering
  \includegraphics[width=\linewidth]{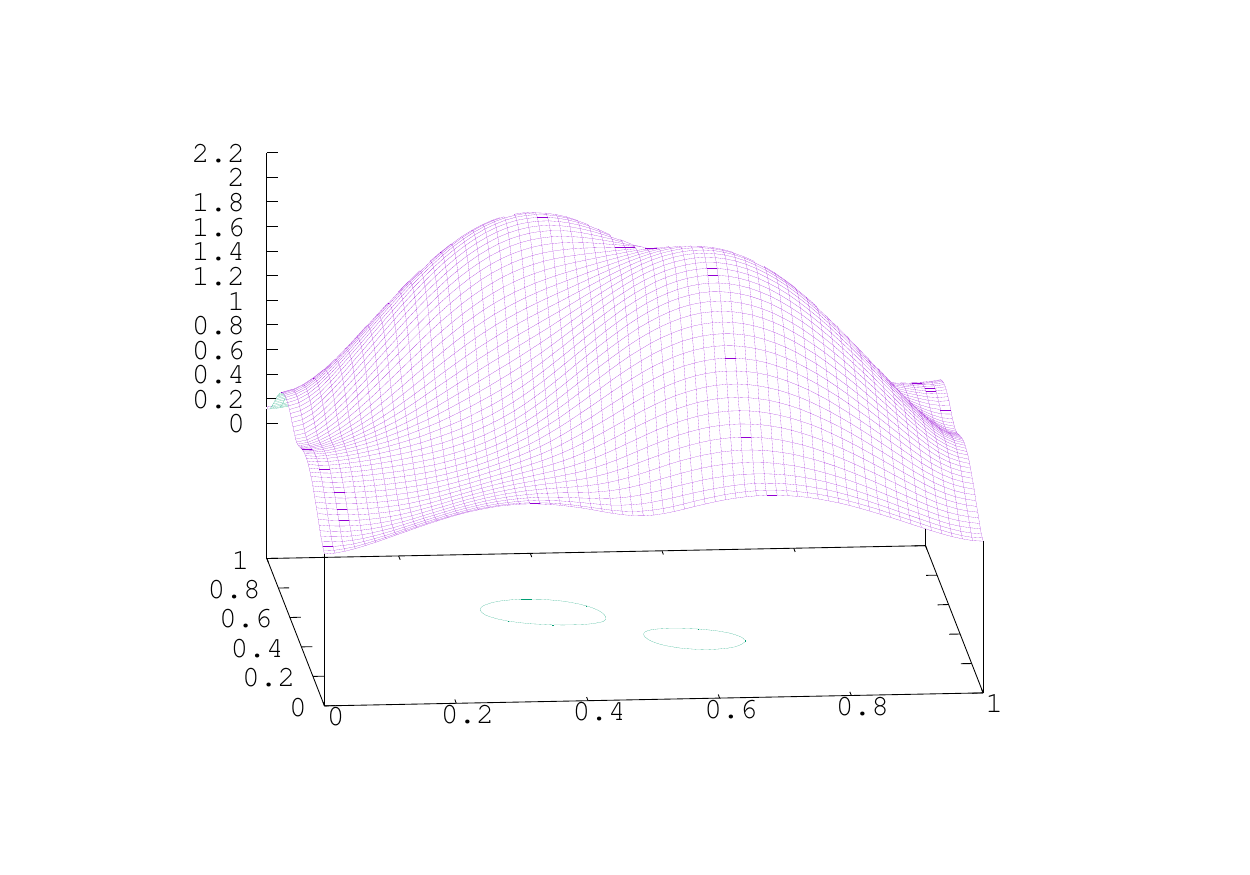}
\end{subfigure}
\begin{subfigure}{.4\textwidth}
  \centering
  \includegraphics[width=\linewidth]{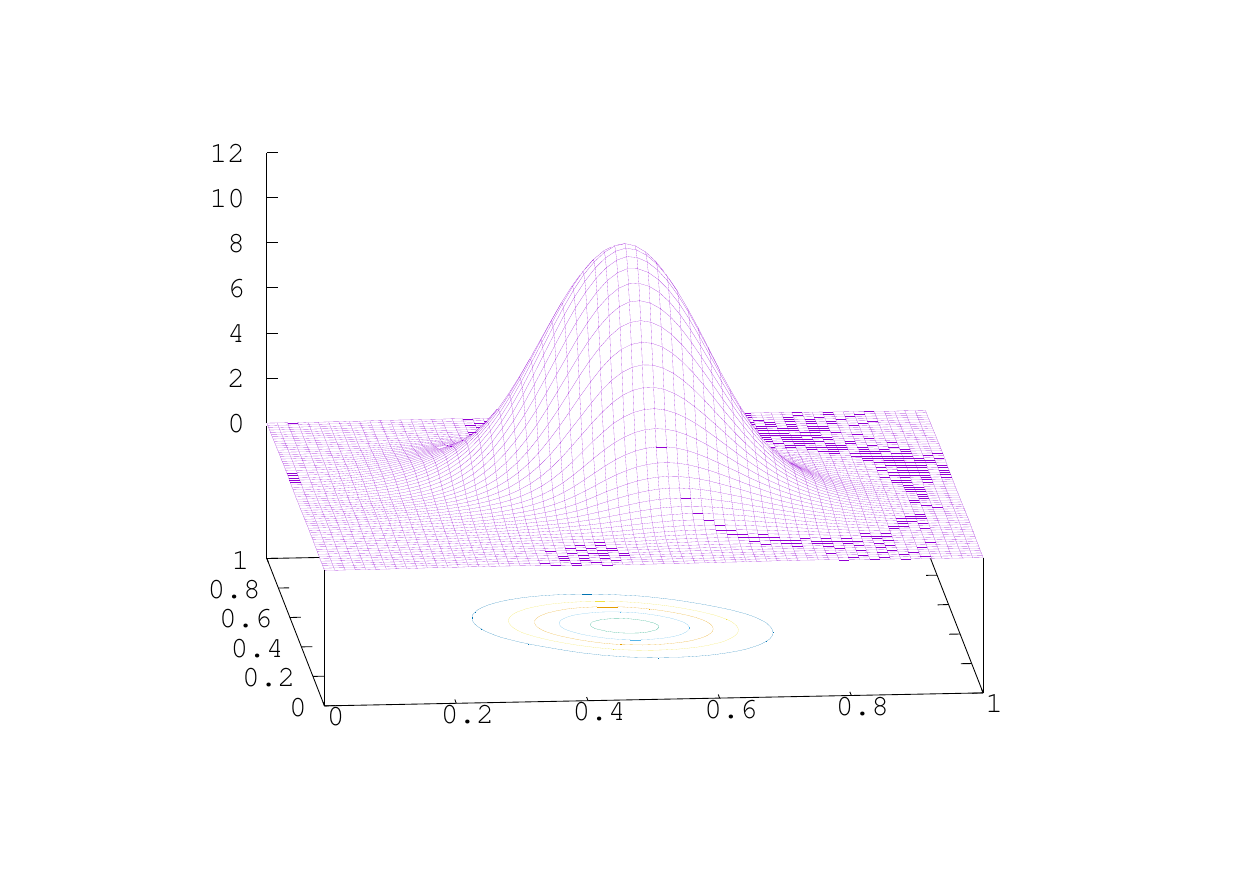}
\end{subfigure}%
\begin{subfigure}{.4\textwidth}
  \centering
  \includegraphics[width=\linewidth]{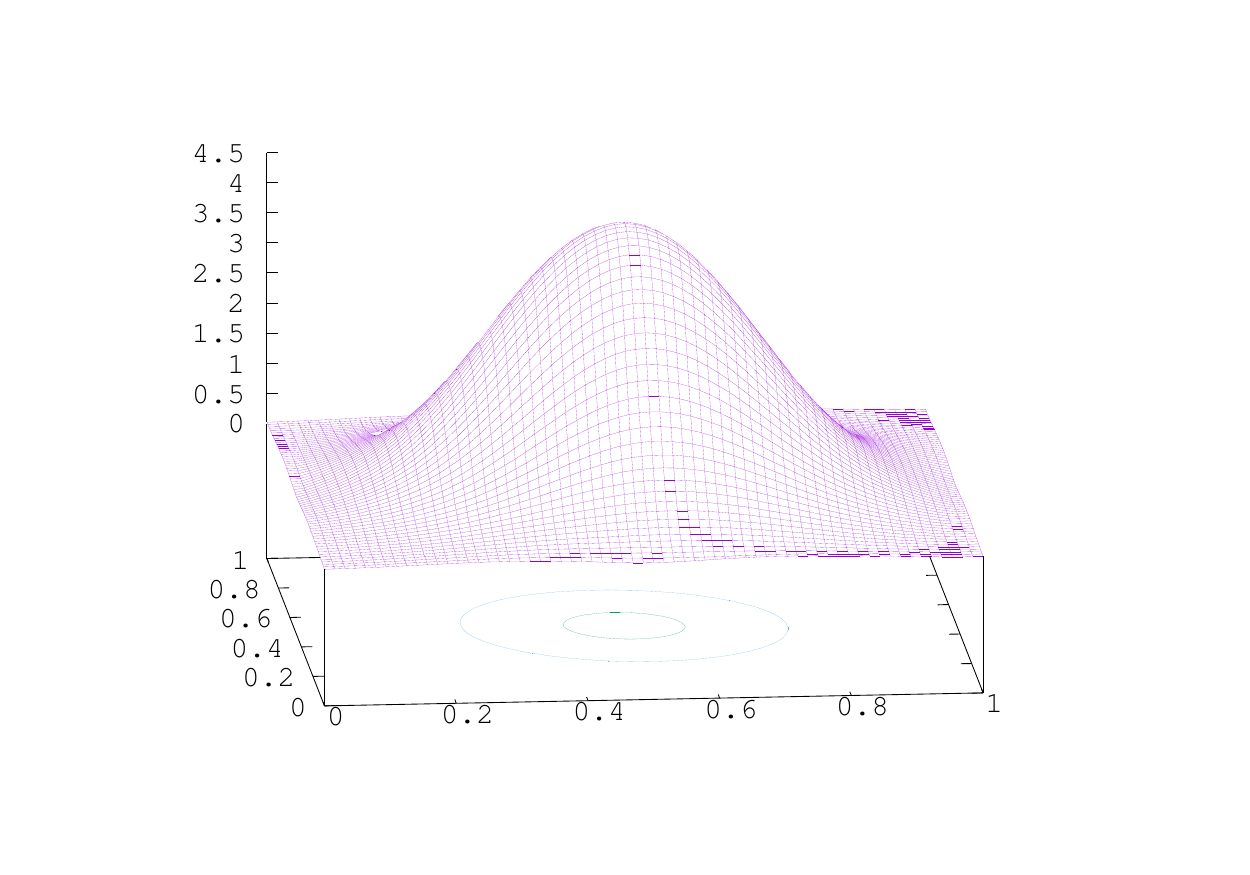}
\end{subfigure}
\begin{subfigure}{.4\textwidth}
  \centering
  \includegraphics[width=\linewidth]{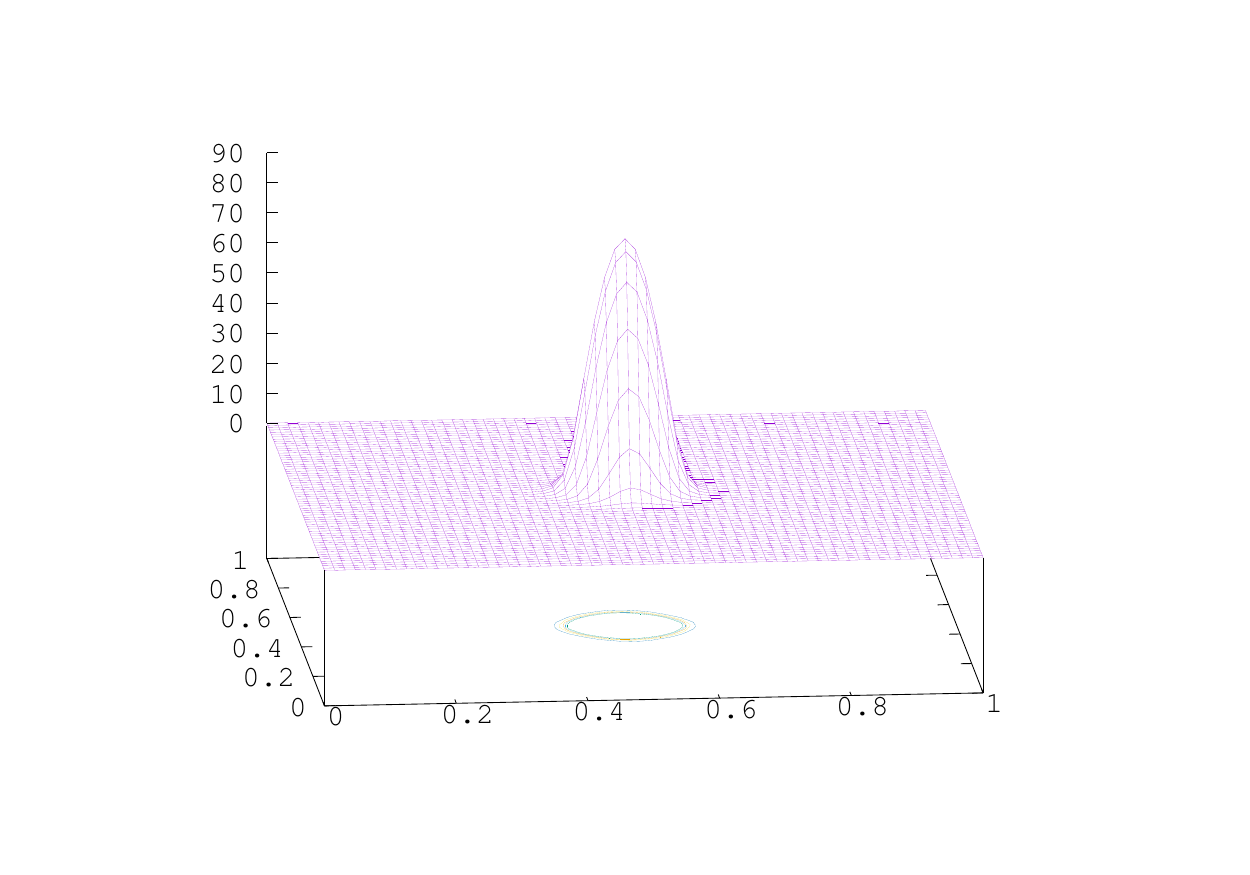}
\end{subfigure}%
\begin{subfigure}{.4\textwidth}
  \centering
  \includegraphics[width=\linewidth]{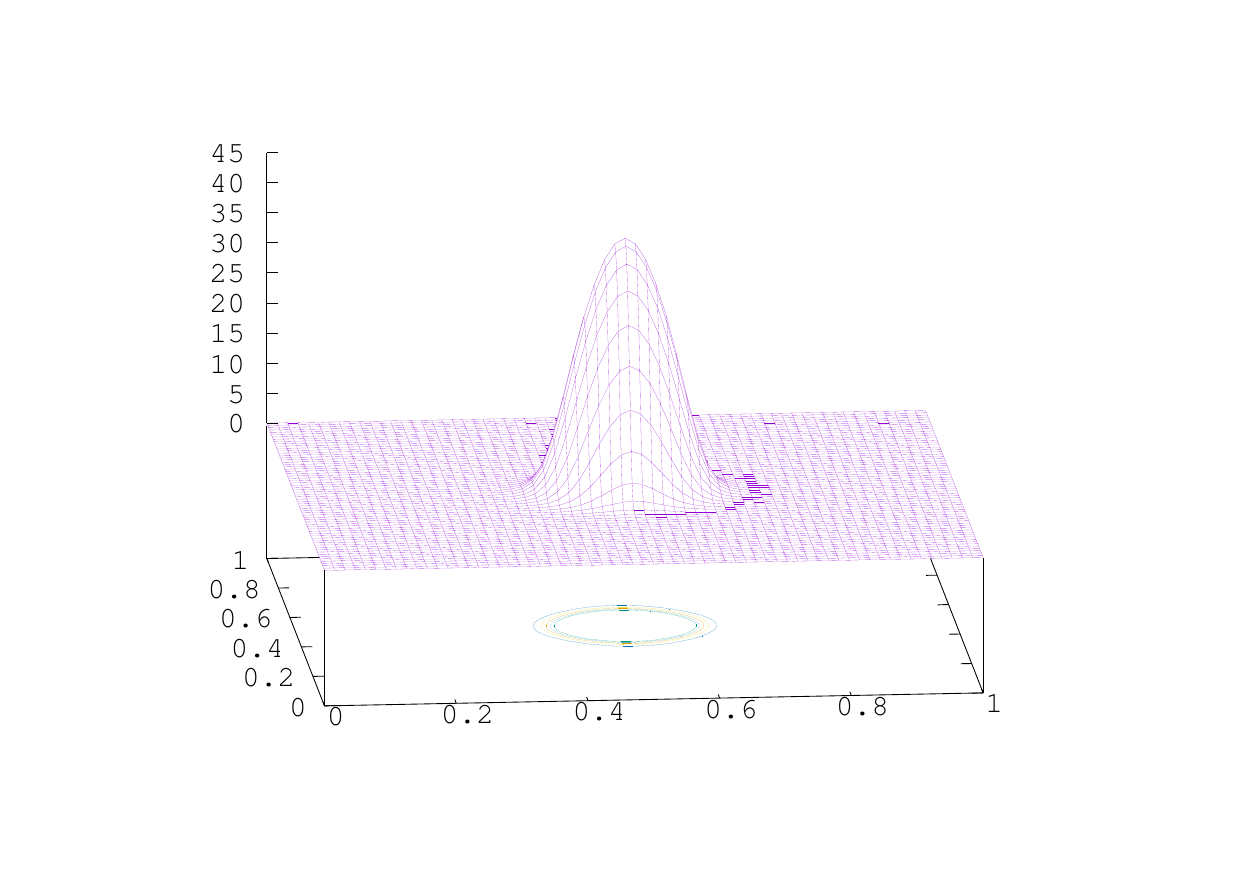}
\end{subfigure}
\caption{Influence of $\ell$ (Test case 3, with periodic boundary conditions and $\alpha = 0.5, \beta = 2$). Left: $\ell(x,m) =  0.01m$, right: $\ell(x,m) =  0.05m$. The rows correspond to $t=0, 1/4, 1/2, 3/4,$ and $T=1$. The contours lines correspond to levels $2,4,6,8,$ and $10$.}
\label{fig:diracbosse-ell-sc}
\end{figure}

\begin{figure}
\centering
\begin{subfigure}{.4\textwidth}
  \centering
  \includegraphics[width=\linewidth]{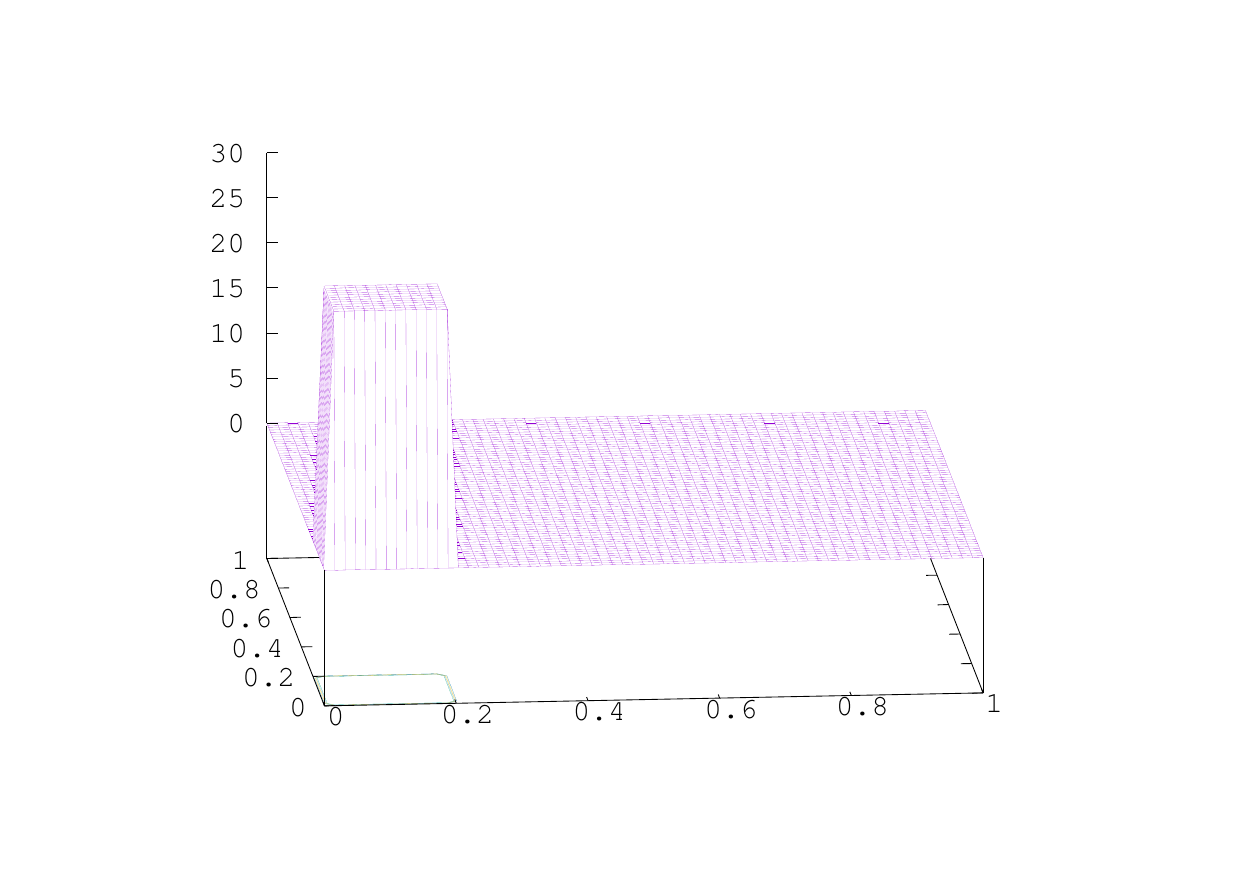}
\end{subfigure}
\vspace{-3em}%
\begin{subfigure}{.4\textwidth}
  \centering
  \includegraphics[width=\linewidth]{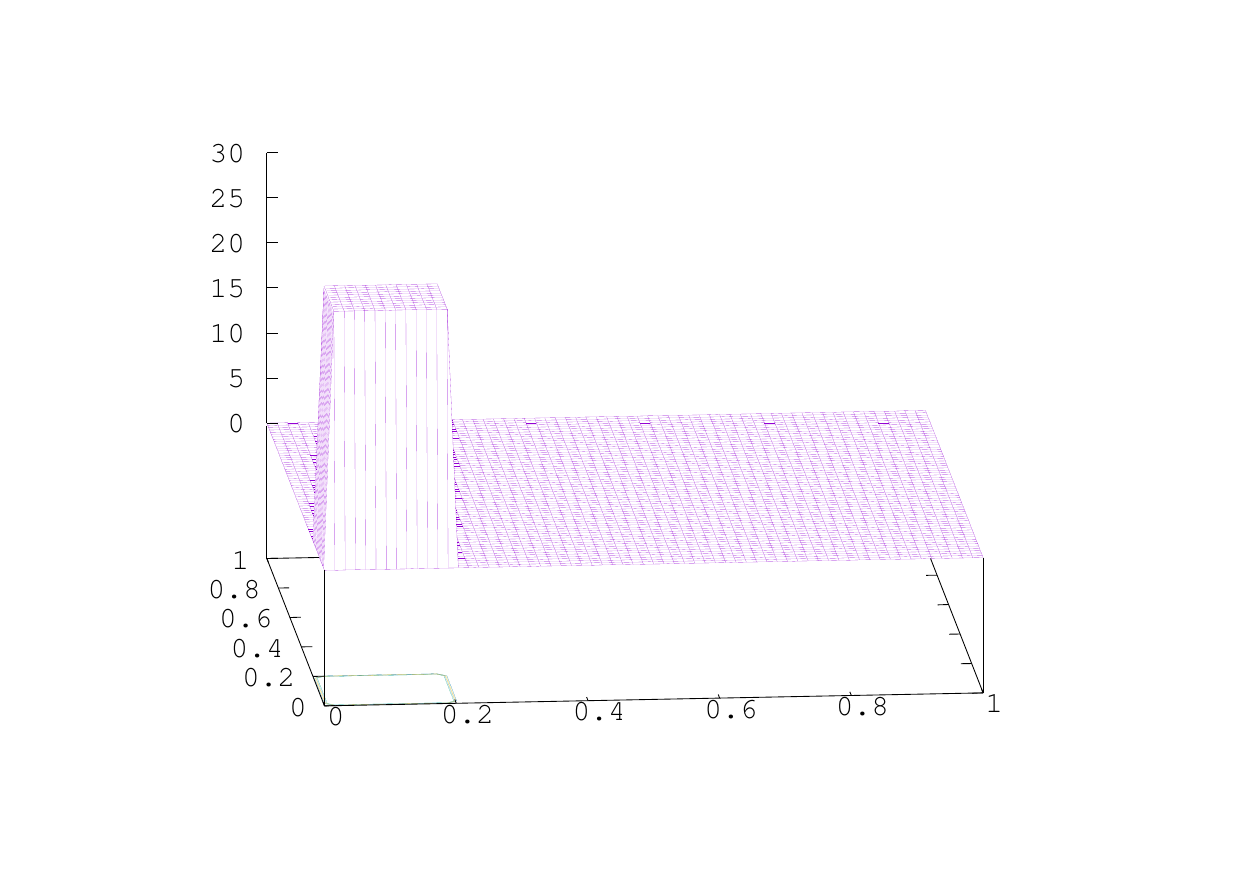}
\end{subfigure}
\vspace{-3em}
\begin{subfigure}{.4\textwidth}
  \centering
  \includegraphics[width=\linewidth]{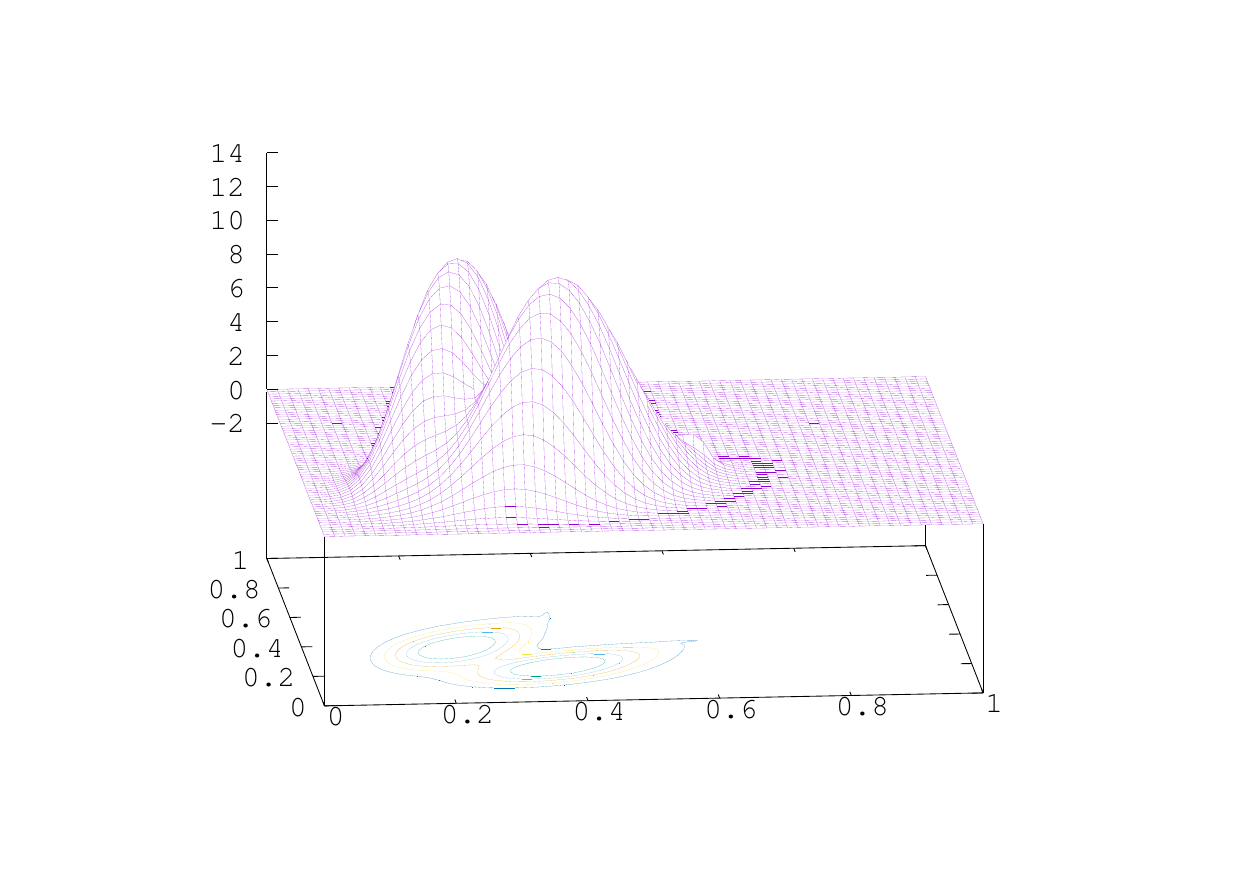}
\end{subfigure}%
\begin{subfigure}{.4\textwidth}
  \centering
  \includegraphics[width=\linewidth]{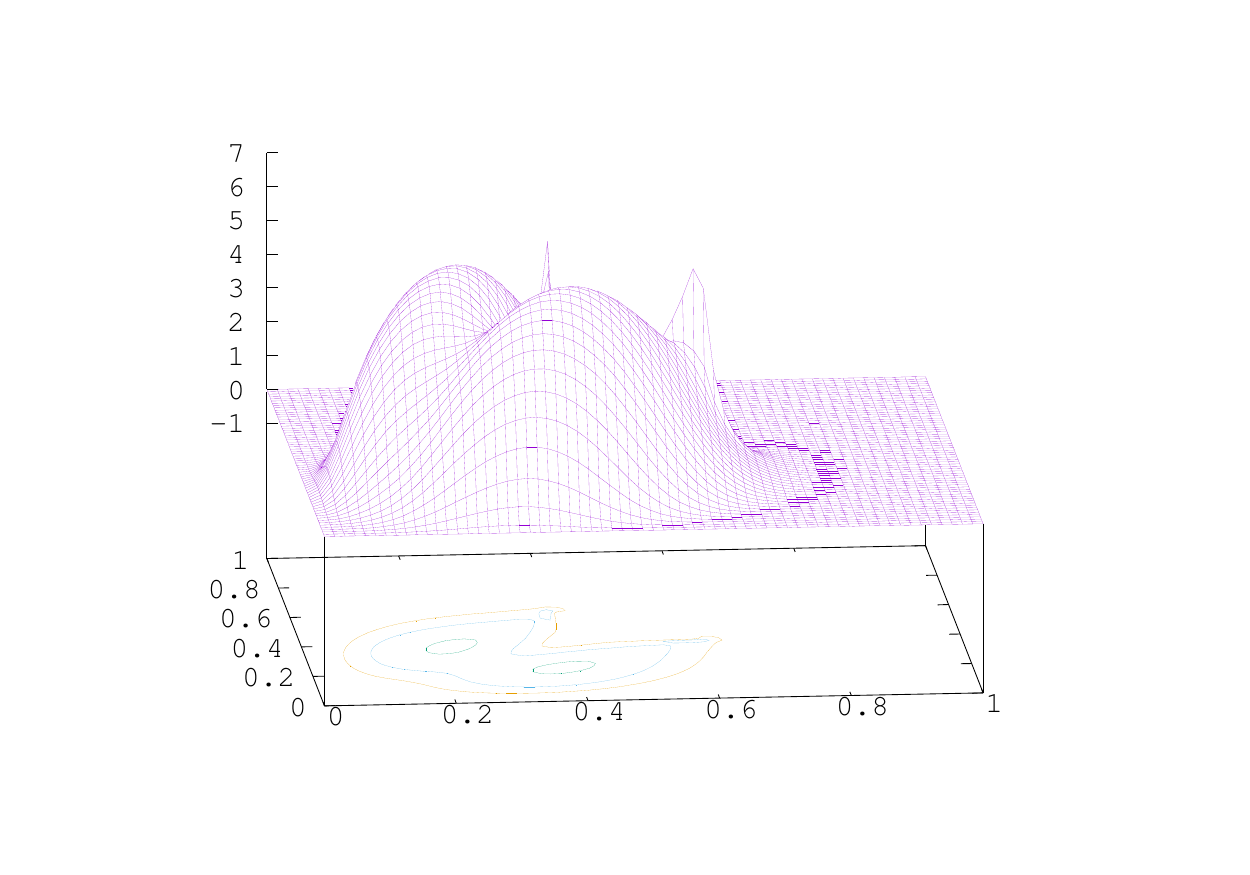}
\end{subfigure}
\vspace{-3em}
\begin{subfigure}{.4\textwidth}
  \centering
  \includegraphics[width=\linewidth]{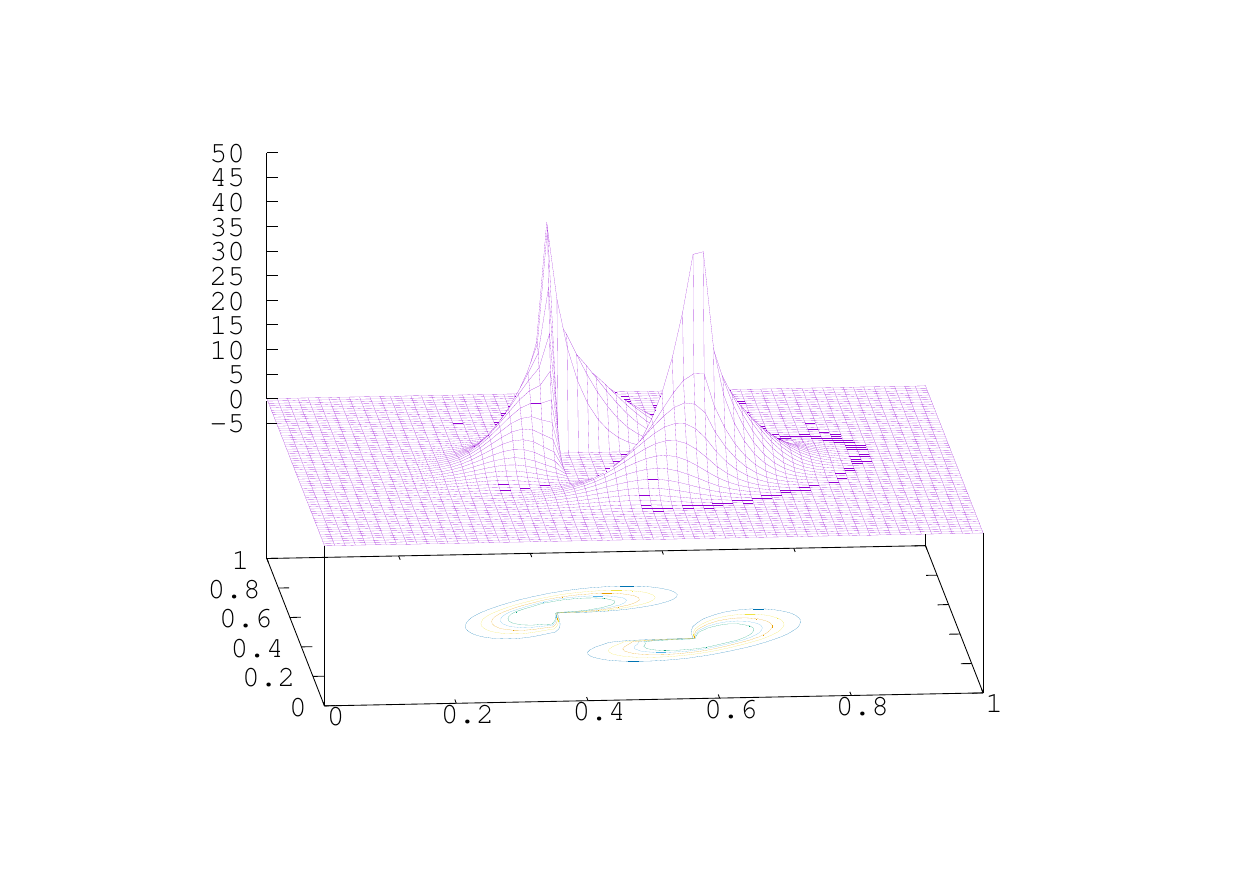}
\end{subfigure}%
\begin{subfigure}{.4\textwidth}
  \centering
  \includegraphics[width=\linewidth]{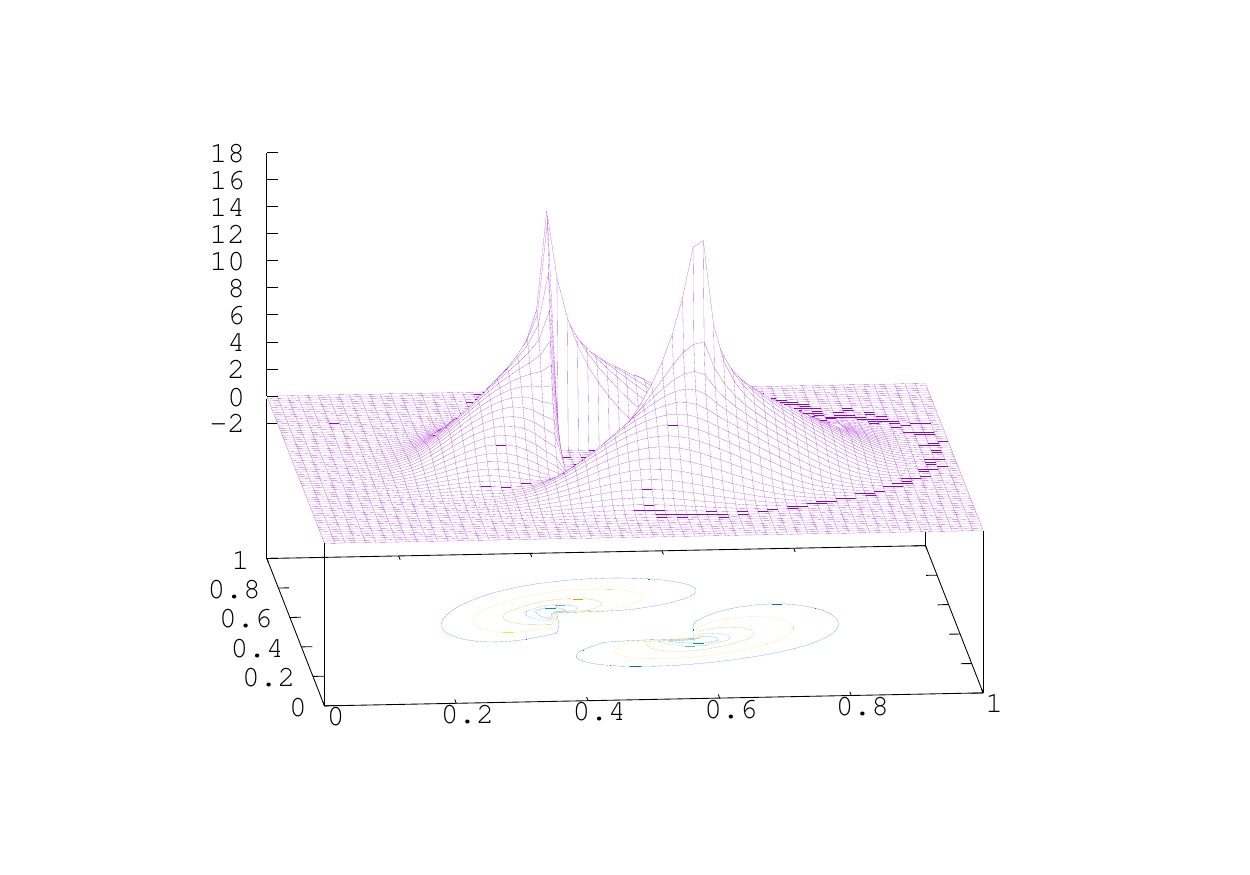}
\end{subfigure}
\vspace{-3em}
\begin{subfigure}{.4\textwidth}
  \centering
  \includegraphics[width=\linewidth]{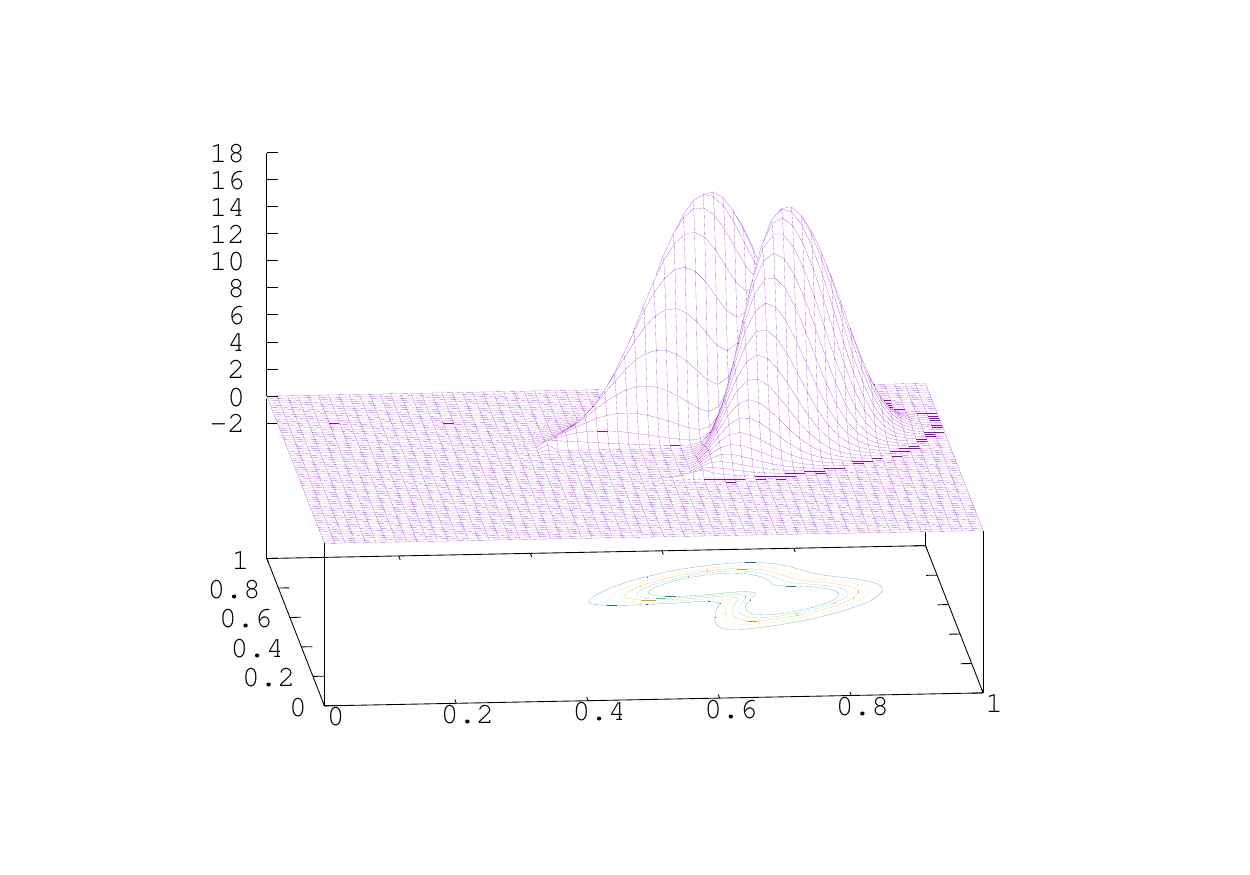}
\end{subfigure}%
\begin{subfigure}{.4\textwidth}
  \centering
  \includegraphics[width=\linewidth]{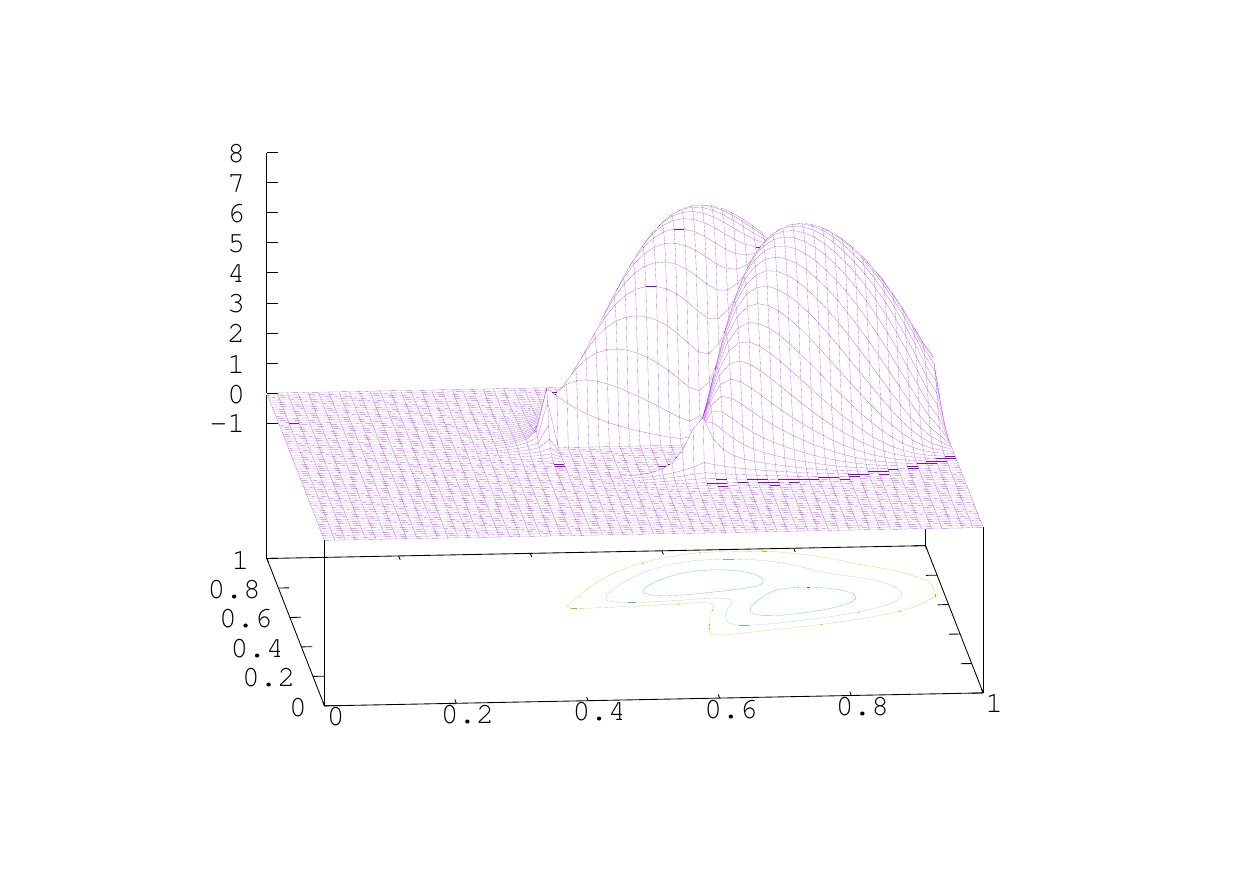}
\end{subfigure}
\begin{subfigure}{.4\textwidth}
  \centering
  \includegraphics[width=\linewidth]{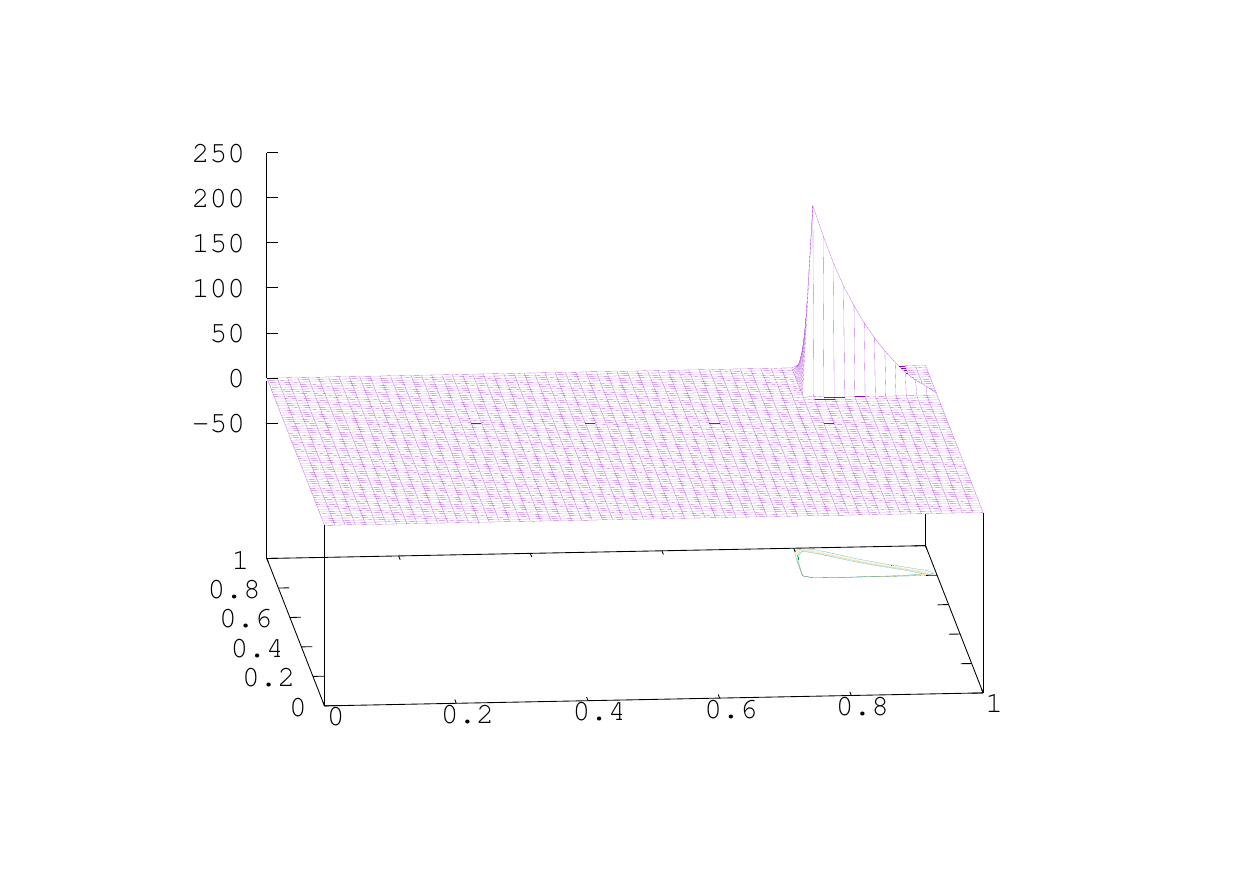}
\end{subfigure}%
\begin{subfigure}{.4\textwidth}
  \centering
  \includegraphics[width=\linewidth]{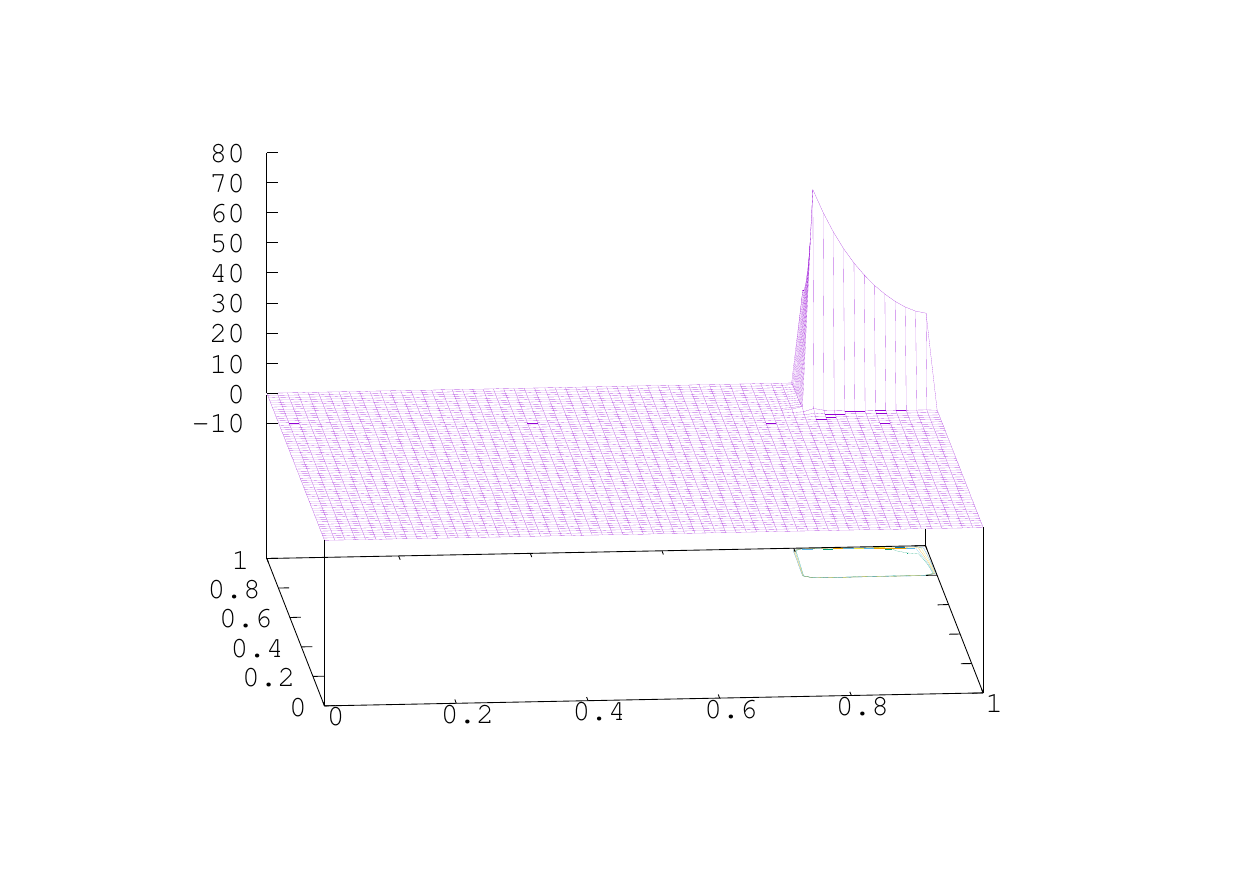}
\end{subfigure}
\caption{Influence of $\ell$ (Test case 2 with state constrained to $[0,1]^2 \setminus (0.4,0.6)^2$, and $\alpha = 0.01, \beta = 2$). Left: $\ell(x,m) =  0.001$, right: $\ell(x,m) =  0.01m$. The rows correspond to $t=0, 1/4, 1/2, 3/4,$ and $T=1$.The contours lines correspond to levels  $2,4,6,8,$ and $10$.}
\label{c2c-ell-sc}
\end{figure}

{\small
\bibliographystyle{amsplain}
\bibliography{mftc-auglag-bib}
}

\end{document}